\documentclass[11pt]{amsart}
\usepackage[top=0.9in, bottom=0.9in, left=1.0in, right=1.0in]{geometry}
\usepackage{amsthm}
\usepackage{enumitem}
\usepackage{moreenum}
\usepackage{amssymb,verbatim}
\usepackage{upgreek}
\usepackage{xspace,cmap}
\usepackage{tikz-cd}
\usepackage{csquotes}
\usepackage{stmaryrd} 
\usepackage[colorlinks,citecolor=blue,urlcolor=black,linkcolor=black]{hyperref}

\newtheorem*{rep@theorem}{\rep@title}
\newcommand{\newreptheorem}[2]{%
\newenvironment{rep#1}[1]{%
 \def\rep@title{#2 \ref{##1}}%
 \begin{rep@theorem}}%
 {\end{rep@theorem}}}

\renewcommand{\restriction}{\mathbin\upharpoonright}    

\newtheorem*{theorem*}{Theorem}
\newtheorem*{maintheorem*}{Main Theorem}
\newtheorem*{question*}{Question}
\newtheorem*{corollary*}{Corollary}
\newtheorem*{definition*}{Definition}

\newtheorem{theorem}{Theorem}[section]
\newreptheorem{theorem}{Theorem}

\newtheorem{claim}{Claim}[theorem]

\newtheorem{lemma}[theorem]{Lemma}
\newtheorem{corollary}[theorem]{Corollary}

\newtheorem{fact}[theorem]{Fact}

\newtheorem{manualtheoreminner}{\textbf{Theorem}}
\newenvironment{manualtheorem}[1]{%
  \IfBlankTF{#1}
    {\renewcommand{\themanualtheoreminner}{\unskip}}
    {\renewcommand\themanualtheoreminner{#1}}%
  \manualtheoreminner
}{\endmanualtheoreminner}

\theoremstyle{definition}

\newtheorem{definition}[theorem]{Definition}
\newtheorem{notation}[theorem]{Notation}

\theoremstyle{remark}
\newtheorem{remark}[theorem]{Remark}

\newcommand\HOD{\textnormal{HOD}}

\DeclareMathOperator{\Add}{Add}

\DeclareMathOperator{\supp}{supp}
\DeclareMathOperator{\crit}{crit}

\DeclareMathOperator{\Ult}{Ult}

    \newcommand{\one}{\mathop{1\hskip-3pt {\rm l}}}

\makeatletter
\newcommand{\tpitchfork}{%
  \vbox{
    \baselineskip\z@skip
    \lineskip-.52ex
    \lineskiplimit\maxdimen
    \m@th
    \ialign{##\crcr\hidewidth\smash{$-$}\hidewidth\crcr$\pitchfork$\crcr}
  }%
}
\makeatother

\def\s{\subseteq}
\def\forces{\Vdash}

\DeclareMathOperator{\cf}{cf}

\DeclareMathOperator{\ord}{Ord}
\DeclareMathOperator{\gHOD}{gHOD}

\newcommand{\dom}{\mathop{\mathrm{dom}}\nolimits}

\newcommand{\po}{\mathbb{P}}
\newcommand{\qo}{\mathbb{Q}}
\newcommand{\la}{\langle}
\newcommand{\ra}{\rangle}

\title[]{The number of  measures on very large measurable cardinals}
\author[Apter]{Arthur W. Apter}
\author[Kaplan]{Eyal Kaplan}
\author[Poveda]{Alejandro Poveda}

\address[Apter]{Department of Mathematics
Baruch College of CUNY
New York, New York 10010}
\email{awapter@alum.mit.edu}
\address[Kaplan]{Department of Mathematics, UC Berkeley, CA 94720, USA, \url{https://eyalkpl.github.io/website/}.}
\email{eyalkaplan@berkeley.edu}
\address[Poveda]{Universitat de Barcelona, Departament de Matemàtiques i Informàtica, Barcelona 08007, Catalonia, Spain \\
\url{https://alejandropovedaruzafa.com/}.}
\email{alejandro.poveda@ub.edu}

\subjclass[2020]{03E35, 03E55}
\keywords{Measurable cardinals, Splitting forcing, Nonstationary support iterations.}

\begin{document}

\maketitle

\begin{abstract}
    We study the possible number of normal measures on a measurable cardinal in settings where inner model techniques are unavailable. Instead, we exploit consequences of the Ultrapower Axiom to obtain our theorems. We show that the classical Kimchi–Magidor result—that the first $n$ measurable cardinals can be strongly compact—can be combined with an arbitrary prescribed pattern for the number of normal measures they carry. We also prove that the first measurable cardinal above a supercompact cardinal can carry any given number of normal measures; the same conclusion is established for the first measurable limit of supercompact cardinals. As further applications of our techniques, we strengthen an unpublished theorem of Goldberg–Woodin and a theorem of Goldberg, Osinski, and Poveda. Our analysis circumvents both the reliance of Friedman–Magidor \cite{FriedmanMagidor} on core model methods and the limitations of the Prikry-type forcing iterations of \cite{GitikKaplan}. 
\end{abstract}

\section{Introduction}

The present manuscript studies the number of \emph{normal measures} on a \emph{measurable cardinal}. Measurable cardinals trace their origin to the classical work of S. Ulam \cite{Ulam} in connection with the Lebesgue measure problem. A cardinal $\kappa$ is called \emph{measurable} if there exists a non-principal, $\kappa$-complete ultrafilter on $\kappa$.\footnote{Such an ultrafilter $U$ trivially induces a nontrivial, two-valued, $\kappa$-additive measure on $\mathcal{P}(\kappa)$. Indeed, the measure $\mu\colon \mathcal{P}(\kappa)\to {0,1}$ associated with $U$ assigns the value $1$ to a set if and only if it belongs to $U$.} Normal measures on $\kappa$ are such ultrafilters that are additionally closed under diagonal intersections. It is well-known that every measurable cardinal carries at least one normal measure. The question of how many normal measures a measurable cardinal may carry has been a significant and well-studied problem in recent decades, culminating in its complete solution by Friedman and Magidor in \cite{FriedmanMagidor} (see the discussion below).
\smallskip

In this paper, we are interested in measurable cardinals that coexist with \emph{very large cardinals}, such as strongly compact and supercompact cardinals. A cardinal $\kappa$ is \emph{strongly compact} if, for every cardinal $\lambda \geq \kappa$, there exists a fine, $\kappa$-complete ultrafilter on $\mathcal{P}_{\kappa}(\lambda)=\{X\subseteq \lambda : |X|<\kappa\}$. A cardinal $\kappa$ is \emph{supercompact} if, for every $\lambda \geq \kappa$, there exists a fine, normal ultrafilter on $\mathcal{P}_{\kappa}(\lambda)$. Both are very strong forms of measurability and strictly exceed measurable cardinals in consistency strength.
\smallskip

Studying the higher levels of the large cardinal hierarchy can be challenging due to the current lack of a \emph{canonical inner model} for a supercompact cardinal. The construction of canonical inner models in set theory originated with Gödel's discovery of the universe of constructible sets, denoted by $L$. However, $L$ itself cannot accommodate most large cardinals—for instance, by a classical theorem of Scott, there are no measurable cardinals in $L$. Nevertheless, over the years a sequence of $L$-like models accommodating progressively stronger large cardinals has been developed, and the inner model program aims to construct such models for all large cardinal axioms.
\smallskip

The inner model theoretic approach provides powerful tools and techniques for addressing fundamental questions about large cardinals. The possible number of normal measures on a measurable cardinal is a concrete example of such a question. Indeed, Kunen constructed the canonical inner model for a measurable cardinal, $L[U]$, and showed that it carries a unique normal measure. Later, Mitchell developed inner models of the form $L[\vec{U}]$, constructed from coherent sequences of measures $\vec{U}$, which can exhibit measurable cardinals carrying any reasonable number of normal measures. However, Mitchell's construction requires assumptions beyond the existence of a single measurable cardinal. It was not clear whether the existence of a single measurable cardinal suffices to establish the consistency of a measurable cardinal $\kappa$ carrying exactly $\lambda$ normal measures, for a prescribed cardinal $1<\lambda<\kappa^{++}$.\footnote{We concentrate here on the GCH context, under which the number of normal measures on a measurable cardinal $\kappa$ is at most $\kappa^{++}$.}
\smallskip



The first major step towards resolving this problem was made in the early 70's by Kunen--Paris \cite{KunenParis}, who showed that starting from a single measurable cardinal $\kappa$, one can produce a forcing extension in which the number of normal measures on $\kappa$ is any cardinal $\lambda$ of cofinality greater than $\kappa^+$. In particular, one can achieve the maximal possible number of normal measures, $2^{2^\kappa}$. More recently, Apter--Cummings--Hamkins \cite{ApterCummingsHamkins} addressed the first case beyond the scope of the Kunen--Paris technique; namely, starting from a single measurable cardinal $\kappa$, the authors constructed a generic extension where  $\kappa$ carries exactly $\kappa^+$ normal measures.  Still, this left open the question of whether, from a single measurable cardinal, one can produce a model of ZFC with exactly $\lambda$ normal measures for any cardinal $\lambda \leq \kappa^+$. 

The above problem was completely resolved by Friedman and Magidor in their pioneering work \cite{FriedmanMagidor}. Friedman--Magidor's work not only settled a long-standing open problem, but also skillfully exploited a number of sophisticated techniques that nowadays play a pivotal role in the field. These include forcing over canonical inner models, the use of nonstationary support iterations, generalized Sacks forcing, and self-coding forcing. The lasting influence of Friedman--Magidor's ideas in the area can be better appreciated through   subsequent works like \cite{BenNeriaII, BenNeriaI, BenNeriaUnger, BenNeriaGitik, ApterCummingsTall, HP, aptercummings2023normalmeasuresonlargecards, GitikKaplan, BenNeriaKaplan, HP2}.

\smallskip

One of the distinctive aspects of the Friedman--Magidor proof is that it requires forcing over a canonical inner model (see \cite{MitHand}). This feature is essential to their strategy for controlling the normal measures that appear in the generic extension in two ways. First, the new normal measures satisfy that the restriction of their ultrapower embeddings to the ground model is an iterated ultrapower  of the core model  \cite{KunenMeasures, Mitchellcore, Schi} ––  such iterated ultrapowers are particularly well understood, thanks to the nature of the core model. Second, the Friedman--Magidor forcing incorporates a coding mechanism that prevents unwanted normal measures from being created, and in turn this coding utilizes the fine structure of the ground model

\smallskip

This fine-structural  approach, if elegant, imposes serious limitations on the extent of the applicability of Friedman--Magidor's methods. Namely, they are  limited to those large cardinals within the reach of current inner model theory.
For instance, the methodology cannot be leveraged to say anything about the number of normal measures on the first strongly compact cardinal, let alone on the first measurable limit of supercompact cardinals.

\smallskip

In response to this, work of Gitik and the second author \cite{GitikKaplan} suggested an alternative route to bypass these limitations. Their methods make it possible to show, for instance, that the first strongly compact cardinal can carry exactly $\eta$ normal measures $\langle U_\tau : \tau < \eta \rangle$, where each $U_\tau$ has Mitchell order $\tau$, and there are no other measurable cardinals with Mitchell order greater than or equal to $\eta$  \cite[Theorem 4.16]{GitikKaplan}. This is proved assuming the consistency of a supercompact cardinal with GCH and the linearity of the Mitchell order\footnote{In fact, weaker assumptions suffice; see \cite[Theorem 4.16]{GitikKaplan}.}. 

The proof technique involves performing a nonstationary support iteration of Prikry forcings (see \cite{BenNeriaUnger}), adding a Prikry sequence to every measurable cardinal of Mitchell order $\geq \eta$ below the least supercompact cardinal $\kappa$. Under the additional assumption that there are no measurable cardinals above $\kappa$, it is shown that $\kappa$ remains strongly compact in the generic extension, and that the normal measures on $\kappa$ have the desired configuration.

\smallskip

This method was further refined in \cite{GitikKaplan} to establish the consistency of the first strongly compact cardinal $\kappa$ being the least measurable cardinal and carrying exactly $\eta$ normal measures, for any given $\eta<\kappa$ (see \cite[Theorem 4.17]{GitikKaplan}). The argument begins with the model described above, in which the least strongly compact cardinal $\kappa$ carries $\eta$ normal measures, and proceeds by performing a Magidor (i.e., full support) iteration of Prikry forcings, adding a Prikry sequence to every measurable cardinal below $\kappa$. By a well-known theorem of Magidor  \cite{MagidorIdentity}, $\kappa$ is strongly compact and the least measurable cardinal in the resulting model. As observed by Ben-Neria in \cite{BenNeriaMagidorIterations}, the Magidor iteration of Prikry forcings does not change the number of normal measures on $\kappa$, and hence $\kappa$ carries exactly $\eta$ normal measures in the generic extension. While Ben-Neria assumes that the Magidor iteration is performed over a canonical inner model, his result remains valid under the weaker assumption of GCH in the ground model (see \cite{KaplanMagidorIterations}).

\smallskip

A natural question arising from the above is whether similar results can be obtained, for instance, at the second strongly compact cardinal. The methods of \cite{GitikKaplan} encounter two main obstacles. First, anti–large cardinal assumptions are used to ensure that a supercompact cardinal $\kappa$ remains strongly compact after a nonstationary support iteration below $\kappa$ –– it is assumed that there are no measurable cardinals above $\kappa$. Second, the least strongly compact cardinal $\kappa$ ceases to be strongly compact after a Prikry sequence is added above it, preventing the use of similar methods to control the number of normal measures on the second strongly compact cardinal.

\smallskip

In \cite{Kaplan}, the second author introduced the splitting forcing, which circumvents the reliance on inner model theory present in the Friedman--Magidor forcing.\footnote{The method has the additional virtue of bypassing the use of generalized Sacks forcing and self-coding forcings.} One of the principal advantages of the splitting forcing is its remarkably simple nature. This simplicity is largely due to suggestions of Omer Ben-Neria, whose insights significantly streamlined and improved the second author’s original forcing construction. In the present paper, we employ variations of the splitting forcing to bypass the use of Prikry-type forcings in \cite{GitikKaplan}. A key appeal of the splitting forcing is its flexibility: It can be modified, refined, and combined with other forcing techniques in a way that preserves many large cardinal configurations already present in the ground model.

\smallskip

Our theorems (see Theorems \ref{thm: IdentityCrises + Normal measures}, \ref{thm: above supercompact}, \ref{thm: first limit of supercompacts}, \ref{thm: Goldberg-Woodin}, and \ref{thm: Hod hypothesis} below) are proved relative to the assumption that the relevant large cardinals are consistent with GCH and Goldberg’s Ultrapower Axiom (UA) (see \cite{goldbergtheultrapoweraxiom} for details on UA). In fact, the proofs use only one consequence of UA: The existence of a unique normal measure of Mitchell order $0$ on every measurable cardinal.\footnote{This follows immediately from the linearity of the Mitchell order, which is a well-known consequence of UA (see \cite[Theorem 2.3.11]{goldbergtheultrapoweraxiom}).}  Thus, it suffices to assume that this property is consistent with the relevant large cardinal hypotheses. The plausibility of our initial assumptions is strongly supported by Goldberg’s work, which includes a detailed analysis of the structure of the universe under UA in the presence of very large cardinals.
\smallskip

Building upon these ideas, we prove the following:

\begin{manualtheorem}{\ref{thm: IdentityCrises + Normal measures}}
    Assume the $\mathrm{GCH}$ holds, there are $n<\omega$ supercompact cardinals $\langle \kappa_i: i<n\rangle$, there are no measurable cardinals above $\kappa_{n-1}$,  and each $\kappa_i$ has a unique normal measure of Mitchell order $0$. For every $i<n$, let $\tau_i\leq\kappa^{++}_i$ be a cardinal. Then there is a generic extension where {\rm{GCH}} holds, $\langle \kappa_i: i<n\rangle$ are the first $n$ strongly compact cardinals, the first $n$ measurable cardinals,  and each $\kappa_i$ carries exactly $\tau_i$ normal measures.
\end{manualtheorem}

\begin{manualtheorem}{\ref{thm: above supercompact}}
    Assume the $\mathrm{GCH}$ holds,  $\kappa$ is a supercompact cardinal, and $\lambda$ is the first measurable cardinal above $\kappa$. In addition, suppose that $\lambda$ has a unique normal measure. Then, for each cardinal $\tau\leq \lambda^{++}$, there is a forcing extension  where the following hold:
    \begin{enumerate}
        \item  $\kappa$ is supercompact;
        \item  $\lambda$ is measurable;
        \item $\lambda$ carries exactly $\tau$ normal measures.
    \end{enumerate}
\end{manualtheorem}

\begin{manualtheorem}{\ref{thm: first limit of supercompacts}}
  Assume the $\mathrm{GCH}$ holds, and  suppose that $\kappa$ is the first measurable limit of supercompact (strong) cardinals. Assume that $\kappa$ carries a unique normal measure of Mitchell order 0. Let $\tau\leq \kappa^{++}$ be a cardinal. Then, there is  a generic extension where $\kappa$ remains the first measurable limit of supercompact (strong) cardinals, and $\kappa$ carries exactly $\tau$ normal measures. 
\end{manualtheorem}

Theorem \ref{thm: IdentityCrises + Normal measures} combines the classical Kimchi--Magidor identity crisis theorem with our new techniques for analyzing the number of normal measures. An important component in the original Kimchi--Magidor proof is its use of Laver's forcing which makes the supercompact cardinals indestructible under sufficiently directed-closed forcings. However, Laver's indestructibility preparation also introduces many unwanted normal measures to the generic extension; our analysis includes a new version of the indestructibility theorem, which grants control over the number of normal measures of trivial Mitchell rank in the generic extension (see Theorem \ref{theorem: Laver indestructibility}). 

As mentioned above, anti-large cardinal assumptions were used in the Gitik--Kaplan proof that the least measurable cardinal may be strongly compact and carry a unique normal measure. In an unpublished work, the same result was already established by Goldberg--Woodin relative to the consistency of UA with a measurable limit of supercompact cardinals, without appealing to any anti--large cardinal assumption. Building on that, and relying on Theorem \ref{thm: first limit of supercompacts}, we are able to prove the following strengthening of the original Goldberg--Woodin theorem:

\begin{manualtheorem}{\ref{thm: Goldberg-Woodin}}
    Assume the $\mathrm{GCH}$ holds, $\kappa$ is a  measurable limit of supercompact cardinals, $\tau\leq \kappa^{++}$, and $\kappa$ has a unique normal measure of Mitchell order 0. Then there is a generic extension where $\kappa$ is the least measurable cardinal, the least strongly compact cardinal, and $\kappa$ carries exactly $\tau$ normal measures.
\end{manualtheorem}

\smallskip

We close the paper with an improvement of a theorem of Goldberg, Onsinski and Poveda 
\cite{GoldbergPoveda} on the status of the first supercompact cardinal in HOD, assuming Woodin's HOD hypothesis. Recall that under the HOD hypothesis, the first extendible cardinal (in fact, $\HOD$-supercompact suffices) is supercompact in HOD \cite{WooPartI, WooDavRod}.\footnote{This result is best possible, in the sense that the first extendible cardinal may consistently be the first strongly compact cardinal in $\HOD$ \cite{GoldbergPoveda}.} A natural question is if this is optimal -- namely, whether or not the first supercompact cardinal must be supercompact in HOD, under the HOD hypothesis.  This was answered in the negative in \cite{GoldbergPoveda} by producing a model of the HOD hypothesis where the first supercompact cardinal $\kappa$ is strongly compact yet not $2^\kappa$-supercompact in HOD. Taking the ideas of \cite{GoldbergPoveda} as a stepping stone, we  produce a model of the HOD hypothesis where the first supercompact cardinal is strongly compact in HOD, and it carries exactly one normal measure. This strengthens the configuration obtained in \cite{GoldbergPoveda}.

\begin{manualtheorem}{\ref{thm: Hod hypothesis}}
Assume $\mathrm{GCH}$ and $V=\gHOD$ both hold, $\kappa$ is a supercompact cardinal, and there exists a unique normal measure on $\kappa$ with trivial Mitchell rank. Then, the following configuration is consistent:
    \begin{enumerate}
        \item The $\mathrm{HOD}$ hypothesis holds.
        \item $\kappa$ is supercompact.
        \item In $\HOD$, $\kappa$ is strongly compact  and it carries exactly one normal measure. 
    \end{enumerate}    
\end{manualtheorem}

\section{Preliminaries}\label{sec: preliminaries}

\subsection{Generalities on nonstationary support iterations}
The main technique used in this paper is nonstationary support forcing iterations.

We leverage this section to  garner a few standard facts about these for later use.  Recall that a set of ordinals $A$ is called \emph{nonstationary in inaccessibles} if for every inaccessible cardinal $\lambda$, $A\cap \lambda$ is nonstationary in $\lambda$. 

\smallskip

Let $\kappa$ be a Mahlo cardinal. An iterated forcing $\po = \langle \po_\alpha, \dot{\qo}_\alpha  \colon \alpha<\kappa\rangle$ is a \emph{nonstationary support iteration} if for every inaccessible cardinal $\alpha\leq \kappa$, $\po_\alpha$ is the nonstationary support limit of $\la \po_\beta \colon \beta<\alpha \ra$, and for every other value of $\alpha<\kappa$, $\po_\alpha$ is the inverse limit of $\la \po_\beta \colon \beta<\alpha \ra$. More pedantically, for every $\alpha\leq \kappa$,  conditions $p\in \po_\alpha$ are functions with domain $\alpha$, such that:
\begin{enumerate}
    \item For all $\beta<\alpha$, $p\restriction \beta \in \po_\beta$ and $p\restriction \beta\Vdash p(\beta)\in \dot{\qo}_\beta$.
    \item The support of $p$, $$\textstyle \supp(p) =\alpha\setminus  \{  \beta<\alpha \colon p\restriction \beta \Vdash p(\beta) = \one_{\dot{\qo}_\beta} \}, $$ is nonstationary in inaccessibles.
\end{enumerate}
Assume that $\kappa$ is a Mahlo cardinal and $I\subseteq \kappa$ is a stationary set of inaccessible cardinals. We say that $\po$ is an \emph{$I$-spaced nonstationary support iteration} if, in addition to being a nonstationary support iteration $\la \po_\alpha, \dot{\qo}_\alpha \colon \alpha<\kappa \ra$, $\po$ satisfies that for every $\alpha\in \kappa\setminus I$, $\dot{\qo}_\alpha$ is forced by $\po_\alpha$ to be trivial forcing. 

In our context, $\mathbb{P}$ will be a nonstationary support iteration whose iterates $\dot{\mathbb{Q}}_\alpha$ possess certain closure properties, such as \emph{$\alpha$-closure} or  \emph{$\alpha$-strategic-closure}. Recall that a forcing poset $\mathbb{Q}$ is  \emph{$\alpha$-closed} if any decreasing sequence of conditions in $\mathbb{Q}$ with length less than $\alpha$ admits a lower bound in $\mathbb{Q}$. 
\smallskip

A natural weakening of $\alpha$-closure that is very much prevalent in the literature is the notion of $\alpha$-strategic closure:
\begin{definition}[$\alpha$-strategic closure]\label{def: modified strategic closure}\hfill
        \begin{enumerate}
            \item Let $\qo$ be a forcing notion and $\alpha$ be a regular cardinal. Let $G_\alpha(\qo)$ be the two player game consisting of $\alpha$ stages, in which the players construct a descending sequence of conditions. Player I plays at odd stages, and Player II plays at even stages (including limit stages). Player I wins if at some limit stage below $\alpha$, Player II fails to pick a condition extending all the conditions constructed so far. Otherwise, Player II wins.
            \item 
            We say that a forcing notion $\qo$ is \emph{$ \alpha$-strategically-closed} if Player II has a winning strategy in the game $G_{\alpha}(\qo)$.\footnote{This notion is also referred to as $\prec \alpha$-strategic-closure in work of the first author with Shelah \cite{ApterShelah}.} Similarly, $\mathbb{Q}$ is \emph{${<}\alpha$-strategically closed } if for all $\beta<\alpha$, Player II has a winning strategy in $G_\beta(\mathbb{Q})$.
        \end{enumerate}
    \end{definition}

\smallskip

An instrumental tool in the context of nonstationary support iterations is the so-called \emph{fusion lemma}. Roughly speaking, this lemma ensures (modulo some extra closure-type assumptions) that if $\mathbb{P}$ is a nonstationary support iteration and $\vec{d}=\langle d(\alpha) : \alpha <\kappa\rangle$ is a sequence of dense open sets of $\mathbb{P}$ then one can 'diagonalize' $\vec{D}$ on a club set of indices $\alpha$.  

\begin{lemma}[The fusion lemma]\label{Lemma: fusion lemma}
    Assume that $\kappa$ is Mahlo and $I\subseteq \kappa$ is a stationary set of inaccessible cardinals. Let $\la \po_\alpha, \dot{\qo}_\alpha \colon \alpha<\kappa \ra$ be an $I$-spaced nonstationary support iterated forcing, such that:
    \begin{enumerate}
        \item For every $\alpha<\kappa$, $\mathrm{rank}(\dot{\qo}_\alpha) < \min(I \setminus (\alpha+1))$.
        \item For every $\alpha< \kappa$, $\one \Vdash_{\po_\alpha} {``\dot{\qo}_\alpha \text{ is }\alpha\text{-strategically closed}}$".
        \item For every $\alpha\in \kappa\setminus I$, $\one \Vdash_{\po_\alpha} {``\dot{\qo}_\alpha \text{ is trivial forcing".}}$
       
    \end{enumerate}
    Let $\po$ be the nonstationary support limit of $\la \po_\alpha \colon \alpha<\kappa \ra$. Then for every $p\in \po$ and a sequence $\vec{d} = \la d(\alpha)\colon \alpha<\kappa \ra$ of dense open subsets of $\po$, there exists a condition $p^*\leq p$ and a club $C\subseteq \kappa$ such that for every $\alpha\in C$, 
    $$\{ r\in \po_{\alpha+1} \colon r^{\smallfrown} (p^*\setminus \alpha+1) \in d(\alpha) \}$$
    is dense in $\po_{\alpha+1}$ below $p^*\restriction (\alpha+1)$.
\end{lemma}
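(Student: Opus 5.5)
We will build $p^*$ and the club $C$ by a fusion construction of length $\kappa$. Along the way we produce a decreasing sequence $\la p_\xi \colon \xi<\kappa\ra$ and an increasing continuous sequence of ordinals $\la \alpha_\xi\colon \xi<\kappa\ra$. The invariant is that for $\xi<\eta$, the condition $p_\eta$ agrees with $p_\xi$ on $\alpha_\xi+1$; more precisely $p_\eta\restriction(\alpha_\xi+1) = p_\xi\restriction(\alpha_\xi+1)$. The club $C$ will be (essentially) $\{\alpha_\xi\colon \xi<\kappa\}$, or its intersection with the club of closure points of the construction.

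\textbf{Successor steps.} Fix $\alpha=\alpha_\xi$. We need to handle $d(\alpha)$ for all possible initial segments below $p_\xi\restriction(\alpha+1)$ simultaneously. By hypothesis (1), and since the $\po_{\alpha+1}$ that matter have size below the next element of $I$, we may enumerate a maximal antichain, or better a dense set $D_\alpha$, of $\po_{\alpha+1}$ below $p_\xi\restriction(\alpha+1)$. Its size is less than $\beta:=\min(I\setminus(\alpha+1))$.

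The tail forcing $\po/\po_{\alpha+1}$ is forced to be $\beta$-strategically closed. This holds because the iterands at coordinates $\ge\alpha+1$ are trivial below $\beta$ and $\gamma$-strategically closed for $\gamma\ge\beta$. At nonstationary-support limits, strategies are combined coordinatewise; the support remains nonstationary since at limit stages of the play the supports form an increasing union of fewer than $\beta$ nonstationary sets, hence remain nonstationary below inaccessibles above $\beta$.

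We then run the tail game through an enumeration $\la r_i\colon i<|D_\alpha|\ra$. At stage $i$ we pick a tail extension $q_i$ and some $r_i'\le r_i$ with $r_i'{}^\smallfrown q_i\in d(\alpha)$. This is done by working with names: we choose a $\po_{\alpha+1}$-name for a tail condition, and mix over the antichain so that the result decides membership. Let $q$ be the final lower bound. Set $p_{\xi+1}=p_\xi\restriction(\alpha+1)^\smallfrown q$, and choose $\alpha_{\xi+1}$ larger than $\alpha$ and than the rank bound.

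The density claim for $\alpha$ now follows. Any $r\le p^*\restriction(\alpha+1)$ is compatible with some $r_i$, and extending to $r_i'$ puts us into $d(\alpha)$, because $d(\alpha)$ is open and $p^*$ extends $q$ on the tail.

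\textbf{Limit steps.} At limit $\xi$ we let $\alpha_\xi=\sup_{\eta<\xi}\alpha_\eta$ and take $p_\xi$ to be the coordinatewise limit: it agrees with $p_\eta$ on $\alpha_\eta+1$ and takes the common value elsewhere. The main obstacle is to verify that $p_\xi$ is a legitimate condition whose support is nonstationary in inaccessibles, and that the final $p^*$ (the limit over all $\xi<\kappa$) is one as well.

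Below each inaccessible $\lambda$, and at $\kappa$ itself, the support of $p^*$ is contained in the union of the supports $\supp(p_{\xi+1})\cap[\alpha_\xi,\alpha_{\xi+1})$. To keep this union nonstationary, we make sure the points $\alpha_\xi$ themselves are excluded from the support. This is arranged by only extending strictly above $\alpha_\xi$ and shrinking on an interval $(\alpha_\xi,\alpha_{\xi+1})$. Then $C=\{\alpha_\xi\}$ is a club disjoint from the support beyond the original support of $p$. Hence $\supp(p^*)\cap\lambda\subseteq (\supp(p)\cap\lambda)\cup(\lambda\setminus C)$, and for inaccessible $\lambda$ which are limits of $C$ this set is nonstationary, since $C\cap\lambda$ is club in $\lambda$. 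For other $\lambda$ the set is bounded-plus-nonstationary.

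We also have to verify that at inaccessible limits $\lambda=\alpha_\xi\in I$ the generic coordinate is handled correctly. This requires a careful bookkeeping of the strategic-closure strategies across all coordinates, so that the limit conditions exist. This existence argument is where the bulk of the work lies.
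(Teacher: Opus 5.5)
Your skeleton matches the paper's: a length-$\kappa$ fusion with frozen initial segments, and a successor step that enumerates $\po_{\alpha+1}$ below the current condition, builds a decreasing sequence of tail names meeting $d(\alpha)$, and uses openness of $d(\alpha)$ for density. The gap is the part you defer as ``the bulk of the work,'' and the limit step as you wrote it would fail. At a limit $\xi$ there is no ``common value'' at coordinates $\gamma>\alpha_\xi$, because every earlier successor step has strengthened the tail. So you need a lower bound of $\langle p_\eta(\gamma)\colon \eta<\xi\rangle$ in $\dot{\qo}_\gamma$, which is only strategically closed. Strategic closure gives such a bound only for sequences that are Player II's moves in a \emph{single} run of $G_\gamma(\dot{\qo}_\gamma)$ played according to a fixed strategy $\dot\tau_\gamma$. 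This is the invariant the paper maintains at every coordinate throughout the construction.

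Your successor step breaks that invariant. It restarts a fresh play of the tail game and sets the new tail equal to that play's final lower bound $q$. The repair is to treat $q(\gamma)$ as Player I's next move in the global run at coordinate $\gamma$, and to take $p_{\xi+1}(\gamma)$ to be $\dot\tau_\gamma$'s reply. This reply extends $q(\gamma)$, so openness of $d(\alpha)$ still gives the density property. At limit $\xi$, take $\dot\tau_\gamma$'s limit-stage move. The lengths then work, because coordinate $\gamma$ is touched fewer than $\gamma$ times unless $\gamma=\alpha_\xi$ for some limit $\xi$.

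That exceptional case is the second gap. Every $\alpha_\xi$ must lie outside $\supp(p_\eta)$ for \emph{all} $\eta$, including $\supp(p)$ itself. Otherwise, at an inaccessible $\alpha_\xi=\xi\in I$ you would need a lower bound in $\dot{\qo}_{\alpha_\xi}$ of a sequence of length $\alpha_\xi$, and $\alpha_\xi$-strategic closure does not supply one. Your proposal explicitly lets $C$ meet $\supp(p)$. Neither ``extending strictly above $\alpha_\xi$'' nor choosing $\alpha_{\xi+1}$ above a rank bound stops later supports from containing a future limit point $\alpha_\xi$. Keeping $\alpha_\xi$ out of every support is what is actually needed at that coordinate, not special handling of ``the generic coordinate.''

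The paper arranges this with a decreasing sequence of clubs $C_\eta$, where $C_0\cap\supp(p)=\emptyset$ and $C_\eta\cap\supp(p_\eta)=\emptyset$. It always picks the next freezing point $\alpha_{\eta+1}=\min(C_\eta\setminus(\alpha_\eta+1))$ before strengthening the condition, and only strictly above that point. Then every limit $\alpha_\xi$ belongs to each $C_\eta$ with $\eta<\xi$, so it lies outside all earlier supports. Consequently $\{\alpha_\xi\colon \xi<\kappa\}$ is a club disjoint from $\supp(p^*)$.
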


\begin{proof}
    Fix for every $\alpha\in I$ a $\po_\alpha$-name $\dot{\tau}_\alpha$ for a strategy of Player II in the game $G_{\alpha}( \dot{\qo}_\alpha)$.
    We construct sequences:
    \begin{itemize}
        \item $\la  p_i \colon i<\kappa\ra$ a decreasing  sequence of conditions in $\po$.
        \item $\la \alpha_i \colon i<\kappa \ra$ a continuous, increasing cofinal sequence in $\kappa$.
        \item $\la C_i \colon i<\kappa \ra$ a decreasing sequence of club subsets of $\kappa$ (with respect to inclusion), where each $C_i$ is disjoint from $\supp(p_i)$. 
    \end{itemize}
    The construction is done in such a way that the following properties hold:
    \begin{itemize}
        \item For every $i<j<\kappa$, $p_{i}\restriction \alpha_i+1 = p_j \restriction \alpha_i+1$.
        \item For every $i<\kappa$, $\{ r\in \po_{\alpha_{i}+1} \colon r^{\frown} p_{i}\setminus (\alpha_i+1) \in d(\alpha_i) \}$ is dense in $\po_{\alpha_i+1}$ below $p_i\restriction (\alpha_i+1)$.
        \item For every $i<j$, $\alpha_j \in C_i$. 
        \item For every $i<\kappa$ and $\alpha\in \supp(p_i)\setminus (\alpha_i+1)$, $p_i\restriction \alpha$ forces that $\la p_{j}(\alpha) \colon j<i, \alpha\in \supp(p_j) \ra$ is the sequence of moves of Player II in a partial run of the game $G_\alpha(\dot{\qo}_\alpha)$ in which Player II plays according to their winning strategy $\dot{\tau}_\alpha$.
    \end{itemize}
    
    Start the construction by letting $p_0 = p$, $\alpha_0 = 0$, and $C_0$ be any club disjoint from $\supp(p)$. Assume that $p_i, \alpha_i, C_i$ have been constructed for some $i<\kappa$. Let $\alpha_{i+1} = \min(C_i \setminus \alpha_i+1)$. We define $p_{i+1}\leq p_i$ such that:
    \begin{enumerate}
        \item $p_{i+1}\restriction \alpha_{i+1} = p_i \restriction \alpha_{i+1}$.
        \item $\alpha_{i+1}\notin \supp(p_{i+1})$ (and since $\alpha_{i+1}\in C_i$ and $C_i$ is disjoint from $\supp(p_i)$,  $p_{i+1}\restriction (\alpha_{i+1}+1) = p_{i}\restriction (\alpha_{i+1}+1)$).
        \item $p_{i+1}\setminus (\alpha_{i+1}+1)$ satisfies the following two properties:
        \begin{enumerate}
            \item $p_{i+1}\setminus (\alpha_{i+1}+1)$ is forced by $p_{i+1}\restriction (\alpha_{i+1}+1)$ to be an  extension $s\leq p_i\setminus (\alpha_{i+1}+1)$ for which $\{ r\in \po_{\alpha_{i+1}+1} \colon  r^{\frown} s\in d(\alpha_{i+1}) \}$ is a dense subset of $\po_{\alpha_{i+1}+1}$ below $p\restriction (\alpha_{i+1}+1)$.
            \item For every $\alpha\in \supp(p_{i+1})\setminus (\alpha_{i+1}+1)$, $p_{i+1}\restriction \alpha$ forces that $\la p_j(\alpha) \colon j\leq i, \alpha\in \supp(p_j) \ra$ is the sequence of moves of Player II in a run of the game $G_\alpha(\dot{\qo}_\alpha)$ in which Player II plays according to $\dot{\tau}_\alpha$.
        \end{enumerate}
    \end{enumerate}
\begin{claim}\label{Claim: fusion lemma, arranging clause 3}
    Clause~(3) can be arranged.
\end{claim}
\begin{proof}[Proof of claim]
   We need to argue that such a condition $s$ exists.  This essentially follows from the fact that $|\po_{\alpha_{i+1}+1}|$ is strictly below the amount of strategic closure of $\po\setminus (\alpha_{i+1}+1)$.\footnote{Indeed, the argument we provide below shows that for every $\alpha<\kappa$, the forcing $\po\setminus (\alpha+1)$ is $ \min(I\setminus (\alpha+1))$-strategically closed.} More formally, let $\langle q_\alpha\colon \alpha<\chi\rangle$ be an injective enumeration of all conditions in $\mathbb{P}_{\alpha_{i+1}+1}$ below $p_i\restriction (\alpha_{i+1}+1)$. We construct $\mathbb{P}_{\alpha_{i+1}+1}$-names $\langle \dot{p}_{i+1}^\beta\colon \beta\leq \chi\rangle$ as follows:
   \begin{itemize}
       \item $p_{i+1}\restriction (\alpha_{i+1}+1)$ forces that $\la \dot{p}^\beta_{i+1} \colon \beta\leq\chi \ra$ is a decreasing sequence of conditions in $\mathbb{P}\setminus (\alpha_{i+1}+1).$
       \item For every $\beta<\chi$, ${q_\beta}^{\frown} \dot{p}^{\beta}_{i+1}\in d(\alpha_{i+1})$.
       \item For every $\beta<\chi$ and $\alpha\in \supp(\dot{p}^\beta_{i+1})\setminus (\alpha_{i+1}+1)$, $\dot{p}^{\beta}_{i+1}\restriction \alpha$ forces that $\la \dot{p}^\gamma_{i+1}(\alpha) \colon \gamma<\beta, \alpha\in \supp(\dot{p}^\gamma_{i+1}) \ra$ is the sequence of moves of Player II in a run of the game $G_\alpha(\dot{\qo}_\alpha)$ in which player II plays according to the strategy $\dot{\tau}_\alpha$. 
   \end{itemize}
   Once such a sequence is constructed, we pick the desired $\po_{\alpha_{i+1}+1}$-name $\dot{s}$ to be a condition on $\po\setminus \alpha_{i+1}+1$ with $\supp(\dot{s}) = \bigcup_{\beta\leq \chi} \supp(\dot{p}^\beta_{i+1})$, as follows: Assume that $\alpha\in (\alpha_{i+1}, \kappa)$ and $\dot{s}\restriction \alpha$ has been constructed. Since $\dot{s}\restriction \alpha$ extends ${p}_{i}\restriction (\alpha_{i+1}+1,\alpha)$, it forces that $\la p_{j}(\alpha) \colon j\leq i, \alpha\in \supp(p_j)\ra$ is the sequence of moves of Player II in the game $G_\alpha(\dot{\qo}_\alpha)$ according to $\dot{\tau}_\alpha$. Assume that, in the same game, Player I picks the condition $\dot{p}^\chi_{i+1}(\alpha)$ at stage $i+1$ (and, if $\alpha\notin \supp(\dot{p}^\chi_{i+1})$, we assume that Player I picks $\one_{\dot{\qo}_\alpha}$). Let $\dot{s}(\alpha)$ be the condition forced by $\dot{s}\restriction \alpha$ to be the response of Player II when playing according to $\dot{\tau}_\alpha$. Since $\alpha$ is an inaccessible cardinal strictly above $\alpha_{i+1}$ (and in particular,  $\alpha>\chi$), the strategy $\dot{\tau}_\alpha$ ensures that Player II has a valid response at each stage in the run of the game described above (recall that $\dot{\tau}_\alpha$ witnesses that $\dot{\qo}_\alpha$ is $\alpha$-strategically closed). This concludes the definition of the condition $\dot{s}$. Since $\chi < \min(I\setminus (\alpha_{i+1}+1))$, the set $\supp(\dot{s})$ is forced to be nonstationary in inaccessibles of $I\setminus (\alpha_{i+1}+1)$, so $\dot{s}\in \po\setminus (\alpha_{i+1}+1)$. It is  not hard to verify that it satisfies the requirements in clause $(3)$ above.

   Thus, it remains to construct the sequence $\langle \dot{p}_{i+1}^\beta\colon \beta\leq \chi\rangle$.

   We first describe how the condition $\dot{p}^0_{i+1}$ is chosen. For that, pick a condition $q'\leq {q_0}^{\frown} (p_i\setminus (\alpha_{i+1}+1))$ such that $q'\in d(\alpha_{i+1})$. Define $\dot{p}^0_{i+1}$ such that $\supp(\dot{p}^0_{i+1}) = \supp(q')$, and, for every $\alpha\in \supp(\dot{p}^0_{i+1})$, $\dot{p}^0_{i+1}\restriction \alpha$ forces that $\dot{p}^0_{i+1}(\alpha)$ is a name for the response dictated to Player II by $\dot{\tau}_\alpha$ after Player I picked $q'(\alpha)$ on their first move in the game $G_\alpha(\dot{\qo}_\alpha)$. 

   Assume now that $\beta\leq \chi$ and $\la \dot{p}^\gamma_{i+1} \colon \gamma<\beta \ra$ have been defined. Let $\dot{t}\in \po\setminus (\alpha_{i+1}+1)$ be inductively constructed, as follows: For every $\alpha\in (\alpha_{i+1}, \kappa)$, if $\dot{t}\restriction \alpha$ was defined, it forces that $\dot{t}(\alpha)$ is the move dictated to Player II by the strategy $\dot{\tau}_\alpha$ in a run of the game $G_{\alpha}(\dot{\qo}_\alpha)$ in which the moves of Player II were $\la \dot{p}^\gamma_{i+1}(\alpha) \colon \gamma<\beta, \alpha\in \supp(\dot{p}^\gamma_{i+1}) \ra$. Note that the condition $\dot{t}\in \po\setminus (\alpha_{i+1}+1)$ constructed this way is forced by $p_{i+1}\restriction (\alpha_{i+1}+1)$ to be a lower bound of $\la  \dot{p}^\gamma_{i+1} \colon \gamma<\beta\ra$. Pick an extension $q'\leq (q_{\gamma})^{\frown} \dot{t}$ such that $q'\in d(\alpha_{i+1})$. Now let $\dot{p}^{\beta}_{i+1}\in \po\setminus (\alpha_{i+1}+1)$ be such that, for every $\alpha\in (\alpha_{i+1}+1, \kappa)$, $\dot{p}^\beta_{i+1}\restriction \alpha$ forces that $\dot{p}^\beta_{i+1}(\alpha)$ is the move dictated to Player II by $\dot{\tau}_\alpha$ in the following run of the game $G_\alpha(\dot{\qo}_\alpha)$: The game begins with the run in which the moves of Player II were $\la \dot{p}^\gamma_{i+1}(\alpha) \colon \gamma<\beta, \alpha\in \supp(\dot{p}^\gamma_{i+1}) \ra^{\frown} \dot{t}(\alpha)$. Then Player I replies with the condition $q'(\alpha)$, and the response of Player II in its next round according to $\dot{\tau}_\alpha$ is taken to be $\dot{p}^\beta_{i+1}(\alpha)$.  

   This concludes the inductive construction of  the sequence $\la \dot{p}^\beta_{i+1} \colon \beta\leq \chi \ra$, and therefore concludes also the construction of the condition $\dot{s}$.
 
\end{proof}
    Let us argue that the condition $p_{i+1}$ constructed this way indeed has a nonstationary support. Fix an inaccessible $\lambda>\alpha_{i+1}$. We claim that there exists a club in $\lambda$ that belongs to $V$, and is forced by $p_{i+1}\restriction (\alpha_{i+1}+1)$ to be disjoint from $\supp(p_{i+1}\setminus (\alpha_{i+1}+1))$. Indeed, there exists a $\po\restriction (\alpha_{i+1}+1)$-name $\dot{D}$ for a club in $\lambda$ which is disjoint from $\supp(p_{i+1}\setminus (\alpha_{i+1}+1)$. By the fact that $\po\restriction (\alpha_{i+1}+1)$ is small relative to $\lambda$ (and, in particular, is $\lambda$-c.c.), there exists a club $D^*\in V$ such that $p_{i+1}\restriction (\alpha_{i+1}+1)$ forces that $D^*$ is contained in $\dot{D}$; in particular, $D^*$ is disjoint from $\supp(p_{i+1})$.
    
    Finally, let $C_{i+1}\subseteq C_i$ be a club subset of $\kappa$ in $V$ which is  disjoint from the support of $p_{i+1}$. This concludes the successor step in the construction.

    For limit steps in the construction, assume that $\la p_j, \alpha_j , C_j \colon j<i \ra$ were constructed for some $i<\kappa$. Let $\alpha_i = \sup_{j<i}\alpha_j$. We define $p_i \in \po$ such that:
    \begin{enumerate}
     \item $p_i\restriction \alpha_{i} = \bigcup_{j<i} p_j \restriction (\alpha_j+1)$. Note that the union is increasing, and in the case where $\alpha_i$ is inaccessible, the sequence $\la \alpha_j \colon j<i \ra$ is a club in $\alpha_i$ disjoint from the $\supp(p_i\restriction \alpha_i)$. Thus,  $p_i\restriction \alpha_i\in \po_{\alpha_i}$.
        \item $\alpha_i \notin \supp(p_i)$. In particular, $p_i\restriction (\alpha_i+1 )\leq p_j \restriction (\alpha_i+1)$ for every $j<i$, since $\alpha_i$ lies outside $\supp(p_j)$  in that of a  limit point of $C_j$.
        \item $p_{i}\setminus (\alpha_{i}+1)$ satisfies the following two properties:
        \begin{enumerate}
            \item $p_i\setminus (\alpha_i+1)$ is forced by $p_i\restriction (\alpha_i+1)$ to be a common extension $\dot{s}$ of $\la p_j \setminus (\alpha_i+1) \colon j<i \ra$ for which $\{  r\in \po_{\alpha_i+1} \colon r^{\frown} \dot{s} \in d(\alpha_i) \}$ is a dense subset of $\po_{\alpha_i+1}$ below $p\restriction (\alpha_{i}+1)$.
            \item For every $\alpha\in \supp(p_{i}\setminus (\alpha_{i}+1))$, $p_{i}\restriction \alpha$ forces that $$\la p_j(\alpha) \colon j\leq i, \alpha\in \supp(p_j) \ra$$ is the sequence of moves of Player II in a run of the game $G_\alpha(\dot{\qo}_\alpha)$ in which Player II plays according to $\dot{\tau}_\alpha$.
        \end{enumerate}
        The proof that such $\dot{s}$ can be constructed is similar to the proof of Claim \ref{Claim: fusion lemma, arranging clause 3}. Note that for each coordinate $\alpha > \alpha_i$, the forcing $\dot{\qo}_\alpha$ is sufficiently strategically closed in order to pick local lower bounds.
    \end{enumerate}

    Finally, let $p^* =\bigcup_{i<\kappa} p_i\restriction (\alpha_i+1)$. {Clearly, $\supp(p^*)\cap \lambda$ is nonstationary in $\lambda$, for every inaccessible cardinal $\lambda<\kappa$.} Also, $\supp(p^*)$ is nonstationary in $\kappa$, as witnessed by the club  $C =\langle \alpha_i \colon i<\kappa \rangle$. To complete the proof of Lemma \ref{Lemma: fusion lemma}, we have to verify the following:
    \begin{claim}
     For every $\alpha\in C$, 
    $$\{ r\in \po_{\alpha+1} \colon r^{\smallfrown} (p^*\setminus \alpha+1) \in d(\alpha) \}$$
    is dense in $\po_{\alpha+1}$ below $p^*\restriction (\alpha+1)$.
    \end{claim}
    \begin{proof}[Proof of claim]
        Let $\alpha\in C$. Our inductive construction ensures that $$D=\{r\in \mathbb{P}_{\alpha+1}\colon r\leq p_\alpha\restriction (\alpha+1)\,\wedge\, r{}^\smallfrown p_{\alpha}\setminus (\alpha+1)\in d(\alpha)\}$$
        is dense below $p_{\alpha}\restriction (\alpha+1)=p^*\restriction(\alpha+1).$
        
        Let $r\in D$ be arbitrary. Since $p^*\leq p_\alpha$, in particular,  $$p^*\restriction (\alpha+1)\forces p^*\setminus (\alpha+1)\leq p_\alpha\setminus (\alpha+1).$$
        But then $r{}^\smallfrown p^*\setminus (\alpha+1)\leq r{}^\smallfrown p_\alpha\setminus (\alpha+1)$, which yields
        $$D\s \{r\in \mathbb{P}_{\alpha+1}\colon r\leq p^*\restriction(\alpha+1)\,\wedge\, r{}^\smallfrown p^*\setminus (\alpha+1)\in d(\alpha)\},$$
        and as a result the latter is dense below $p^*\restriction (\alpha+1).$
    \end{proof}
    This concludes the proof of Lemma \ref{Lemma: fusion lemma}.
\end{proof}

\begin{lemma}\label{Lemma: nonstatsupport iterations preserving GCH}
    Let $\po = \la \po_\alpha, \dot{\qo}_\alpha  \colon \alpha<\kappa \ra$ be an $I$-spaced nonstationary support iteration satisfying the hypotheses of Lemma \ref{Lemma: fusion lemma}. Assume {\rm{GCH}} and suppose that for every $\alpha\in I$, $\dot{\qo}_\alpha$ is forced by $\po_\alpha$ to be a forcing notion that preserves cardinals. Then, the following hold:
    \begin{enumerate}
        \item \label{clause: nonstat iterations are nice 1} $\po$ preserves cardinals.
        \item \label{clause: nonstat iterations are nice 2} Suppose that, in addition, for each $\alpha\in I$, $\dot{\qo}_\alpha$ is forced by $\po_\alpha$ to be a forcing notion that preserves {\rm{GCH}}. Then $\po$ preserves {\rm{GCH}}.
    \end{enumerate}
\end{lemma}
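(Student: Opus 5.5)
We split $\po$ at each $\alpha<\kappa$ as $\po\cong \po_{\alpha+1} * (\po\setminus(\alpha+1))$ and combine two facts. First, the initial segments are small. Second, the tails are highly strategically closed. The footnote in the proof of Claim \ref{Claim: fusion lemma, arranging clause 3} gives the closure we need: $\po\setminus(\alpha+1)$ is $\min(I\setminus(\alpha+1))$-strategically closed. Hypothesis (1) of Lemma \ref{Lemma: fusion lemma} and GCH give the size bound: $|\po_{\alpha+1}|<\min(I\setminus(\alpha+1))$. For this we argue by induction on $\beta\le\alpha+1$ that $|\po_\beta|$ stays below the next element of $I$. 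A nonstationary-support limit at an inaccessible $\beta$ has size at most $2^{|\po_{<\beta}|\cdot\beta}$, which is computed with GCH and is still below the next point of $I$. The remaining stages are trivial or rank-bounded.

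For clause (1), take a cardinal $\mu$ and first suppose $\mu\le\kappa$. For $\mu<\kappa$, choose $\alpha<\mu$ with $\mu\le\min(I\setminus(\alpha+1))$, and any $\alpha$ with $\alpha<\mu$ if $\mu$ itself is in $I$ or a limit. The tail is then $\mu$-strategically closed, so it adds no new ${<}\mu$-sequences. This holds in $V^{\po_{\alpha+1}}$, where we need the strategic closure to persist. It does, since the tail is defined in $V^{\po_{\alpha+1}}$ and the lemma's hypotheses hold there. So $\mu$ is preserved by the tail. For the head $\po_{\alpha+1}$, if $\mu$ is a successor of a cardinal at most $\alpha$, we apply the induction hypothesis to the shorter iteration $\po_{\alpha+1}$, which preserves cardinals by induction on the length, using that each $\dot\qo_\beta$ preserves cardinals. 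If $\mu>|\po_{\alpha+1}|$, the head is $\mu$-c.c. and preserves $\mu$. The case $\mu=\kappa$ is a limit of preserved cardinals. For $\mu>\kappa$, we show $|\po|\le 2^\kappa=\kappa^+$, so $\po$ is $\kappa^{++}$-c.c. It remains to preserve $\kappa^+$. Suppose $\dot f$ names a surjection $\kappa\to\kappa^+$. We apply Lemma \ref{Lemma: fusion lemma} with $d(\alpha)$ the dense set deciding $\dot f(\alpha)$. This yields $p^*$ and a club $C$ such that for $\alpha\in C$, $\dot f(\alpha)$ is decided by $\po_{\alpha+1}$ below $p^*$, with fewer than $\min(I\setminus(\alpha+1))<\kappa$ possible values. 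Hence $p^*$ forces $\operatorname{ran}\dot f$ into a ground-model set of size $\kappa$, which is a contradiction. The same fusion argument handles $\mathrm{cf}$ and cardinality considerations at $\kappa$.

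For clause (2), we count nice names. For $\mu<\kappa$, the factorization shows that subsets of $\mu$ are added by some $\po_{\alpha+1}$ with $\alpha<\mu$, or in a way that makes it small, $\mu^+$-c.c.\ up to the relevant stage. By induction on the length, $\po_{\alpha+1}$ preserves GCH. Once $|\po_{\alpha+1}|\le\mu$, it adds at most $(\mu^{\mu})^{\mu}=\mu^+$ subsets of $\mu$, so $2^\mu=\mu^+$. At $\kappa$, we apply fusion to a name $\dot X\subseteq\kappa$, with $d(\alpha)$ deciding ``$\alpha\in\dot X$''. Below $p^*$, $\dot X$ is then determined by a sequence of $\po_{\alpha+1}$-names for bits, $\alpha\in C$, each drawn from a set of size ${<}\kappa$. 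The number of such sequences is $\kappa^\kappa=\kappa^+$. A density argument over all $p$ gives $2^\kappa=\kappa^+$ in the extension. Above $\kappa$, since $|\po|\le\kappa^+$ and $\po$ is $\kappa^{++}$-c.c., the standard nice-name count gives $2^\mu=\mu^+$ for $\mu\ge\kappa^+$.

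The main obstacle is bookkeeping the size bound $|\po_{\alpha+1}|<\min(I\setminus(\alpha+1))$ at nonstationary-support limits. The count at $\kappa$ via fusion is a further obstacle: we must ensure that the decisions made by $\po_{\alpha+1}$ range over fewer than $\kappa$ possibilities, uniformly enough to get the $\kappa^+$ bound.
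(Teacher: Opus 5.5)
Your overall architecture matches the paper's: a size bound for cardinals $\ge\kappa^{++}$, fusion for $\kappa^+$ and for $2^\kappa$, and the factorization $\po\cong\po_{\alpha+1}*(\po\setminus(\alpha+1))$ below $\kappa$. However, the fusion step as you set it up does not work.

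\textbf{The fusion sets are too weak.} You take $d(\alpha)$ to decide the single bit ``$\alpha\in\dot X$'' (or the single value $\dot f(\alpha)$). Lemma \ref{Lemma: fusion lemma} then controls only the coordinates $\alpha\in C$. Below $p^*$ you therefore know $X\cap C$ (resp.\ $f\restriction C$), not $X$ (resp.\ $\mathrm{ran}\,f$). So your count of $\kappa^\kappa$ sequences of bits only bounds the number of possible $X\cap C$. Likewise, for a surjection $f$ it does not follow that $\mathrm{ran}\,\dot f$ is covered by a ground-model set of size $\kappa$.

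The missing idea is to let $d(\alpha)$ consist of those $q$ such that $q\restriction(\alpha+1)$ forces that $q\setminus(\alpha+1)$ decides the whole initial segment $\dot X\cap\alpha$. Deciding $\dot X\cap\alpha$ outright is not dense, since $\po_{\alpha+1}$ itself adds subsets of $\alpha$. The modified set is dense precisely because the tail $\po\setminus(\alpha+1)$ is $\min(I\setminus(\alpha+1))$-strategically closed in $V^{\po_{\alpha+1}}$, so it adds no new $\alpha$-sequences. This is where the strategic closure of the tails actually enters the count. With this choice, $X=\bigcup_{\alpha\in C}(X\cap\alpha)$, each $X\cap\alpha$ is given by a $\po_{\alpha+1}$-name in $V_\kappa$, and $|{}^\kappa V_\kappa|=\kappa^+$ gives the bound. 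For $\kappa^+$ in clause (1) there is a cheaper fix, which is the paper's: take $\dot f$ non-decreasing and cofinal, so that boundedness on the unbounded set $C$ bounds all of $\mathrm{ran}\,\dot f$.

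\textbf{The singular case of clause (2) is not handled.} Suppose $\mu<\kappa$ is a singular limit of points of $I$. Then subsets of $\mu$ are in general not added by any $\po_{\alpha+1}$ with $\alpha<\mu$, since the generic on $I\cap\mu$ can code a new one. Moreover, $\po_\mu$ is an inverse limit of size up to $\mu^+$ and is in general not $\mu^+$-c.c. A nice-name count therefore only gives $2^\mu\le\mu^{++}$. Lemma \ref{Lemma: fusion lemma} does not apply, and GCH below $\mu$ alone does not force $2^\mu=\mu^+$ when $\cf(\mu)=\omega$.

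The paper handles this by factoring at $\zeta+1$, where $\zeta=\cf(\mu)$. The quotient $\po_\mu/G_{\zeta+1}$ is $\zeta^+$-strategically closed. Hence the $\zeta$ dense open sets $D(\alpha)$ (``the part above $\mu_\alpha+1$ decides $\dot X\cap\mu_\alpha$'') can be met simultaneously. Every subset of $\mu$ is then coded in $V[G_{\zeta+1}]$ by a $\zeta$-sequence of elements of $V_\mu$, and there are only $\mu^{\zeta}=\mu^+$ of these.

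\textbf{Smaller points.} Your ``induction on the length'' is not literally available, because the lemma is stated for iterations of Mahlo length with $I$ stationary. At intermediate limits you need the singular argument above, or, for inaccessible $\gamma$ with $I\cap\gamma$ nonstationary, a version of fusion along a club avoiding $I$. Finally, the size bound $|\po_{\alpha+1}|<\min(I\setminus(\alpha+1))$, which you flag as the main obstacle, is immediate from hypothesis (1) of Lemma \ref{Lemma: fusion lemma} and the inaccessibility of $\min(I\setminus(\alpha+1))$.
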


\begin{proof}
    We first prove clause (1). By GCH, $|\po| = \kappa^+$. In particular, for every $\lambda\geq \kappa^{++}$, $\lambda$ remains a cardinal. The fact that $\kappa^+$ remains a cardinal follows from the following claim, which relies on the previous fusion lemma:
    \begin{claim}
      Let $\dot{f}$ be a $\mathbb{P}$-name such that $\one \forces_{\mathbb{P}} ``\dot{f}\colon\kappa\rightarrow \kappa^+$ is an increasing function".   Then, the set $\{p\in \mathbb{P}:  \exists \alpha<\kappa^+\,(p\forces_{\mathbb{P}}\mathrm{Im}(\dot{f})\s \alpha)\}$ is dense.

      In particular,    forcing with $\po$ preserves $\kappa^+$.
    \end{claim}
\begin{proof}[Proof of claim]
Let $\langle d(\alpha): \alpha<\kappa\rangle$  be the sequence of dense open subsets of $\po$, where each $d(\alpha)$ consists of all the conditions $p\in \mathbb{P}$ which decide the value of $\dot{f}(\alpha)$. By Lemma~\ref{Lemma: fusion lemma}, every condition can be extended to a condition $p$ for which there exists a club $C\subseteq \kappa$ such that, for every $\alpha\in C$,
    $$ \{  r\in \po_{\alpha+1} \colon r^{\frown} (p\setminus \alpha+1)\in d(\alpha)  \} $$
    is dense below $p\restriction (\alpha+1)$. In particular, let
    $$\gamma = \sup\{  \beta< \kappa^+ \colon  \exists \alpha\in C \ \exists r\in \po_{\alpha+1} \ \left( r^{\frown} (p\setminus \alpha+1)\Vdash \dot{f}(\alpha)< \beta \right) \}.$$
    Then $\gamma<\kappa^+$ (because $|\mathbb{P}_{\alpha+1}|<\kappa$) and clearly $p\Vdash \mathrm{Im}(\dot{f})\subseteq \gamma$.
\end{proof}

    In order to show that $\po$ preserves $\kappa$, it suffices to show that $\po$ preserves all cardinals $\lambda<\kappa$. Assume otherwise, and let $\lambda<\kappa$ be the least cardinal being collapsed by $\po$. Factor $\po = \po_{\lambda} * \dot{\qo}_{\lambda} * \po\setminus (\lambda+1)$. Since $\dot{\qo}_\lambda$ preserves cardinals and $\po\setminus (\lambda+1)$ is $\lambda^+$-distributive (see the proof of Claim \ref{Claim: indestructibility proof, claim 1}), the forcing $\po_\lambda$ collapses $\lambda$. By the argument from the previous paragraph, $\lambda$ cannot be a successor cardinal; this implies that $\lambda$ is a limit cardinal, contradicting the assumption that it is the least cardinal collapsed by $\po$.

    Let us proceed to clause \eqref{clause: nonstat iterations are nice 2}.
    Let $\lambda$ be a cardinal\footnote{Note that by the previous observation there is no ambiguity here.} and factor the iteration as $\mathbb{P}=\mathbb{P}_\lambda\ast \dot{\mathbb{Q}}_\lambda\ast \mathbb{P}\setminus (\lambda+1).$ In the ground model, $``2^\lambda=\lambda^+$" holds, so if $\mathbb{P}_\lambda$ were to preserve this fact we will be done – indeed, $\dot{\mathbb{Q}}_\lambda$ is forced to preserve the $\mathrm{GCH}$ and $\mathbb{P}\setminus (\lambda+1)$ is $\lambda^+$-distributive, ergo none of them change the power-set-function pattern at $\lambda$ from the ground model. We prove that  $\mathbb{P}_\lambda$ preserves $``2^\lambda=\lambda^+$" by induction on $\lambda$. Suppose that for every $V$-cardinal $\beta<\lambda$ forcing with $\mathbb{P}_\beta$ preserves the GCH at $\beta$. We will show that given any generic $G_\lambda\s \mathbb{P}_\lambda$ and a set $X\in \mathcal{P}^{V[G_\lambda]}(\lambda)$ there is a club $C\in \mathrm{Cub}^V_\lambda$ and a sequence $\langle X_\beta\mid \beta\in C\rangle\in \prod_{\beta\in C} \mathcal{P}^{V[G_\beta]}(\beta)$ 
    such that $X=\bigcup_{\beta\in C} X_\beta$.\footnote{In the case where $\lambda$ is $V$-singular, $C$ will instead be the ordinal $\cf^V(\lambda)$.} Note that this set of sequences has size $(2^\lambda)^V=\lambda^+$: 
    To show this notice that each sequence $\langle X_\beta\mid \beta\in C\rangle$ comes from a  (partial) $\lambda$-sequence of $\mathbb{P}_\beta$-nice names $\tau_\beta$, each  naming a subset of $\beta$. Since for each of those $\beta$,   $\tau_\beta\in V_\lambda$, we conclude that there are at most $|{}^\lambda V_\lambda|^V=\lambda^+$-many such sequences of names, ergo at most $\lambda^+$-many sequences $\langle X_\beta\mid \beta\in C\rangle$, and therefore at most $\lambda^+$-many $X\in \mathcal{P}^{V[G]}(\lambda)$.\footnote{In the case where $\lambda$ is $V$-singular we will have a $\cf^V(\lambda)$-sequence of members of $V_\lambda$, which  either way  has size $\lambda^+$, by the GCH in the ground model.}

    \smallskip

   \textbf{\underline{Case $\lambda$ is regular:}} 
  Let $X\in \mathcal{P}^{V[G_\lambda]}(\lambda)$. Without loss of generality let us assume that this is forced by the trivial condition. For each $\beta<\lambda$, let $$d(\beta):=\{p\in \mathbb{P}_\lambda :  p\restriction \beta+1\forces_{\mathbb{P}_{\beta+1}} \exists \dot{X}_\beta\s \beta\;(p\setminus (\beta+1)\forces_{\mathbb{P}_\lambda/\dot{G}_{\beta+1}} \dot{X}\cap \beta=\dot{X}_\beta)\}.$$ 
  This set is clearly open, and it is dense in that $\mathbb{P}_\lambda/\dot{G}_{\beta+1}$ is forced to be $\beta^+$-strategically-closed, by our assumptions in the lemma.

   Invoking Lemma~\ref{Lemma: fusion lemma}, we find a condition $p^*$ and a club $C\s \alpha$ such that, for each $\beta\in C$, the set $e(\beta):=\{r\in \mathbb{P}_{\beta+1}\mid r{}^\smallfrown p^*\setminus (\beta+1)\in d(\beta)\}$ is dense below $p^*\restriction (\beta+1)$. Since there are $\mathbb{P}_\lambda$-dense many such conditions we can assume that $p^*\in G_\lambda$. Now, for each $\beta\in C$ and $r\in e(\beta)$, we invoke the Forcing Maximality Principle to let $\dot{X}_{\beta,r}$ a  $\mathbb{P}_{\beta+1}$-name witnessing that $r\in e(\beta)$. Now we amalgamate those names into a single $\mathbb{P}_{\beta+1}$-name, taking
   $$\dot{X}_\beta:=\{\langle \dot{X}_{\beta,r},r\rangle: r\in e(\beta)\}.$$

   Clearly, $p^*\forces_{\mathbb{P}_\lambda}\dot{X}_\beta\s \dot{X}\cap \beta$.\footnote{Of course, here we are identifying $\dot{X}_\beta$ with its natural lifting to a $\mathbb{P}_\lambda$-name.} To show the converse, let $\gamma<\beta$ and $s\leq p^*$ forcing $``\gamma\in \dot{X}\cap \beta$". By density of $e(\beta)$ we may assume that $s\restriction (\beta+1)\in e(\beta)$. Therefore, $(s\restriction \beta+1){}^\smallfrown p^*\setminus (\beta+1)\in d(\beta)$ and thus the latter necessarily forces $``\gamma\in \dot{X}_{\beta,s\restriction(\beta+1)}\s \dot{X}_\beta$", hence so does the stronger condition $s$ as well. By a density argument this shows that  $p^*\forces_{\mathbb{P}_\lambda} \dot{X}\cap \beta\s \dot{X}_\beta$, as it was needed.
   \medskip

   \underline{\textbf{Case $\lambda$ is singular in $V$:}} 
   We consider  the case where $\lambda$ is a limit of members of $I\cap \lambda$ -- in particular, $\lambda$ is  strong limit in $V$. The case where $\lambda$ is not a limit of members of $I\cap \lambda$ is easier since then $\po_\lambda$ is forcing equivalent to the poset $\po_\gamma$ where $\gamma = \sup(I\cap \lambda)< \lambda$, and by a usual nice names argument and GCH in $V$ this latter adds at most $\lambda^+$-many subsets to $\lambda$.
    
    Denote $\zeta = \cf(\lambda)$. Consider the forcing $\po_\lambda\setminus (\zeta+1)$. It is $\zeta^{+}$-strategically closed. Fix $\la \lambda_\alpha \colon \alpha<\zeta \ra$ an increasing cofinal sequence in $\lambda$ such that $\lambda_0 > \zeta$. For each $\alpha<\zeta$, working in $V[G_\lambda]$, let us consider
    $$d(\alpha) = \{  q\in \po_\lambda/G_{\lambda_\alpha+1} \colon \exists A\subseteq \lambda_\alpha  (q\Vdash_{\mathbb{P}_\lambda/G_{\lambda_\alpha+1}} \dot{X}\cap\lambda_\alpha = \dot{A}) \}.$$
    Each $d(\alpha)$ is a dense open subset of $\po_\lambda/G_{ \lambda_\alpha+1}$, ergo the set 
    $$ D(\alpha) = \{  r\in \po_\lambda/G_{\zeta+1} \colon r\restriction \lambda_\alpha +1 \Vdash \exists A_\alpha\subseteq \lambda_\alpha  ( r\setminus \lambda_\alpha+1 \Vdash \dot{X}\cap \lambda_\alpha = A_\alpha) \} $$
    is dense open in $\po_\lambda/G_{\zeta+1}$. This is true for every $\alpha<\zeta$. 
    
    Since $\po_\lambda/G_{\zeta+1}$ is $\zeta^{+}$-strategically closed, the intersection $D^* = \bigcap_{\alpha<\zeta} D_\alpha$ is dense in $\po_\lambda/G_{\zeta+1}$. Work in $V[G_{\zeta+1}]$. If $q\in \po_\lambda/G_{\zeta+1}$ is a condition in the intersection $D^*$ then $q$ carries a sequence $\la \dot{A}_\alpha \colon \alpha<\zeta\ra$ such that each $\dot{A}_\alpha$ is a $\po_{\lambda_\alpha+1}/G_{\zeta+1}$-name for a subset of $\lambda_\alpha$, and $q$ basically reduces the name $\dot{X}$ to the sequence $\langle \dot{A}_\alpha \colon \alpha<\zeta \rangle$; to wit, $q\forces_{\mathbb{P}_\lambda/G_{\zeta+1}}\dot{A}_\alpha=\dot{X}\cap \lambda_\alpha$. 
    
    The above shows that the number of subsets of $\lambda$ in $V[G_\lambda]$ is bounded by the number of sequences $(V_\lambda)^{\zeta}$ in $V[G_{\zeta+1}]$. Standard $\mathbb{P}_{\zeta+1}$-nice names arguments shows that the cardinality of this set is bounded by
    $$\left(|(V_\lambda)^{\zeta}|^{|\mathbb{P}_{\zeta+1}|}\right)^V\leq |{}^{<\lambda}V_\lambda|^V=|\lambda^{\cf(\lambda)}|^V=\lambda^+.$$
    This completes the proof of the lemma.
   
\end{proof}
For later analysis we also record the proof of the following technical fact:
\begin{lemma}\label{lemma: capturing functions via fusion}
    Let $\po = \la \po_\alpha, \dot{\qo}_\alpha  \colon \alpha<\kappa \ra$ be an $I$-spaced nonstationary support iteration satisfying the hypotheses of Lemma \ref{Lemma: fusion lemma}. Let $\dot{f}\colon \kappa\rightarrow \ord$ be a $\mathbb{P}$-name for a function as forced by some condition $p\in \mathbb{P}$. Then, there is $p^*\leq p$, a club $C\s\kappa$ and a function $F\colon \kappa\rightarrow V$ with $|F(\alpha)|\leq |\mathbb{P}_{\alpha+1}|^V$ for every $\alpha\in C$, such that $p^*\forces \forall \alpha\in \check{C}\, (\dot{f}(\alpha)\in \check{F}(\alpha)).$  
\end{lemma}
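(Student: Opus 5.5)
The plan is to imitate the argument for $\kappa^+$-preservation in the proof of Lemma \ref{Lemma: nonstatsupport iterations preserving GCH}, but to record the set of possible values instead of only their supremum. For each $\alpha<\kappa$ let
$$d(\alpha)=\{q\in\po : q\leq p \text{ or } q\perp p, \text{ and } q \text{ decides } \dot f(\alpha)\}.$$
Below $p$ the name $\dot f(\alpha)$ is forced to be an ordinal, so each $d(\alpha)$ is dense open. Apply Lemma \ref{Lemma: fusion lemma} to $p$ and $\la d(\alpha)\colon\alpha<\kappa\ra$. This gives $p^*\leq p$ and a club $C\subseteq\kappa$ such that for every $\alpha\in C$ the set
$$e(\alpha)=\{r\in\po_{\alpha+1}\colon r^\frown(p^*\setminus\alpha+1)\in d(\alpha)\}$$
is dense in $\po_{\alpha+1}$ below $p^*\restriction(\alpha+1)$.

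For $\alpha\in C$, define $F(\alpha)$ to be the set of ordinals $\xi$ such that some $r\in e(\alpha)$ with $r\leq p^*\restriction(\alpha+1)$ satisfies $r^\frown(p^*\setminus\alpha+1)\Vdash\dot f(\alpha)=\xi$. Each such condition decides $\dot f(\alpha)$ and is compatible with $p$, so it extends $p$ and the value $\xi$ is unique. The map $r\mapsto\xi$ is therefore a surjection from a subset of $\po_{\alpha+1}$ onto $F(\alpha)$, which gives $|F(\alpha)|\leq|\po_{\alpha+1}|^V$. For $\alpha\notin C$ set $F(\alpha)=\emptyset$. The definitions take place in $V$, so $F\in V$.

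It remains to check that $p^*\Vdash\dot f(\alpha)\in F(\alpha)$ for every $\alpha\in C$. Suppose not: some $s\leq p^*$ forces $\dot f(\alpha)=\xi$ with $\xi\notin F(\alpha)$. By density of $e(\alpha)$, extend $s\restriction(\alpha+1)$ to some $r\in e(\alpha)$. The condition $r^\frown(s\setminus\alpha+1)$ extends both $s$ and $r^\frown(p^*\setminus\alpha+1)$. The latter forces $\dot f(\alpha)=\xi'$ for some $\xi'\in F(\alpha)$, so $\xi'=\xi$, a contradiction. Since $C\in V$, the universally quantified statement $p^*\Vdash\forall\alpha\in\check C\,(\dot f(\alpha)\in\check F(\alpha))$ then follows.

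The main point of care is the compatibility of the concatenation $r^\frown(s\setminus\alpha+1)$ with $r^\frown(p^*\setminus\alpha+1)$, since $s\setminus\alpha+1$ is a $\po_{\alpha+1}$-name. Because $r\leq s\restriction(\alpha+1)$, the condition $r$ forces $s\setminus\alpha+1\leq p^*\setminus\alpha+1$ in the quotient, so the concatenation is a legitimate condition below both. Everything else is routine bookkeeping given Lemma \ref{Lemma: fusion lemma}.
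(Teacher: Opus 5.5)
Your argument is essentially the paper's proof: you use the same dense sets $d(\alpha)$, the same application of Lemma \ref{Lemma: fusion lemma}, and the same definition of $F(\alpha)$ via $e(\alpha)$, and you also correctly spell out the final forcing verification that the paper leaves as clear. One small repair is needed: your $d(\alpha)$ also requires conditions $q\perp p$ to decide $\dot f(\alpha)$, which may be impossible, so it need not be dense in all of $\mathbb{P}$ as the fusion lemma requires; take instead $d(\alpha)=\{q\in\mathbb{P}\colon q\perp p \text{ or } q \text{ decides } \dot f(\alpha)\}$, as the paper does, and the rest goes through unchanged.
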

\begin{proof}
  For each $\alpha<\kappa$ let $d(\alpha):=\{q\in \mathbb{P}\mid q\perp p\;\vee\; (\exists\beta\; q\forces \dot{f}(\alpha)=\beta) \}$.  This is a dense open subset of $\mathbb{P}$. By the Fusion Lemma \ref{Lemma: fusion lemma} there is $p^*\leq^* p$ and a club $C\s \kappa$ such that $e(\alpha):=\{r\in \mathbb{P}_{\alpha+1}\mid r{}^\smallfrown p^*\setminus (\alpha+1)\in d(\alpha)\}$ is dense below $p^*\restriction (\alpha+1)$ for all $\alpha\in C$. Consider $F\colon C\rightarrow V$ defined as $$F\colon \alpha\mapsto \{\beta_r\mid r\in e(\alpha),\,r\leq p^*\restriction (\alpha+1),\,r{}^\smallfrown p^*\setminus (\alpha+1)\forces \dot{f}(\alpha)=\beta_r\}.$$
  It is clear that $F$ is as wanted.
\end{proof}

Recall that an elementary embedding $j\colon V\rightarrow M$ is called an \emph{extender ultrapower embedding} if there is an extender $E$ such that $j$ is the ultrapower embedding induced by $E$, $j_E$. For an overview of the general theory of extenders we refer the reader to Kanamori's book  \cite{Kan}. An important consequence of Lemma \ref{Lemma: fusion lemma}  is the lifting of extender ultrapower embeddings:
\begin{lemma}\label{Lemma: lifting elementary embeddings with fusion}
    Let $I\s \kappa$ be a stationary set consisting of inaccessible cardinals and  $j\colon V\to M$ be an extender ultrapower embedding with $\crit(j) = \kappa$ for which the following properties hold:
    \begin{itemize}
        \item[$(\aleph)$] $V\vDash {}^\kappa M\subseteq M$.
        \item[$(\beth)$] The generators of $j$ are bounded below $j(h)(\kappa)$ for some function $h\colon \kappa\to \kappa$, which satisfies that $j(h)(\kappa)<\min(j(I)\setminus \kappa+1)$.
    \end{itemize}
    Let $\po = \la \po_\alpha, \dot{\qo}_\alpha \colon \alpha<\kappa \ra$ be an $I$-spaced nonstationary support iteration of length $\kappa$ satisfying the assumptions of Lemma \ref{Lemma: fusion lemma}. Let $G\subseteq \po$ be generic over $V$. Denote $\dot{\qo}_\kappa = j\left( \la \dot{\qo}_\alpha \colon \alpha<\kappa \ra \right)(\kappa)$. Then:
    \begin{enumerate}
        \item \label{Clause: G is generic over M} ${j(\po)}_\kappa= \po$, and $G\subseteq \po$ is generic over $M$.
        \item \label{Clause: g is generic} In $V[G]$, let $\qo_\kappa = (\dot{\qo}_\kappa)_G$. If $g\subseteq \qo_\kappa$ is generic over $V[G]$, then $g$ is $\qo_\kappa$-generic over $M[G]$.
        \item \label{Clause: j[G] generates a generic} $j[G]\setminus \kappa+1$ generates a $j(\po)\setminus \kappa+1$-generic set over $M[G*g]$; to wit, 
    $$H= \{ q \in j(\po)\setminus \kappa+1 \colon \exists p\in G \ ( q\geq j(p)\setminus \kappa+1 )\}.$$
  
    \end{enumerate}
    In particular, $j\colon V\to M$ lifts to an embedding $j^*\colon V[G]\to M[G*g*H]$ which is definable in $V[G*g]$. Moreover, if for each $\alpha\in I$ the forcing $\dot{\qo}_\alpha$ is forced to be either  $\alpha^{+}$-c.c. or $\alpha^+$-closed, $M[G*g*H]$ is closed under $\kappa$-sequences in $V[G*g]$.
   
\end{lemma}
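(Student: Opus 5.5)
Clause (1) comes first. Since $\kappa$ is inaccessible in $M$ and ${}^\kappa M\subseteq M$, the sets $V_\kappa^M$ and $V_\kappa$ coincide, so $j(\po)\restriction\kappa=\po$ by elementarity. Because $\kappa\in j(I)$, the stage $\kappa$ of $j(\po)$ is a nonstationary support limit in $M$, and $\kappa$-closure of $M$ gives the same limit as in $V$; hence ${j(\po)}_\kappa=\po$. Genericity of $G$ over $M$ is then immediate, since every dense set of $\po$ in $M$ lies in $V$.

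For clause (2), the name $\dot\qo_\kappa$ lies in $M$, and $\qo_\kappa=(\dot\qo_\kappa)_G$ is evaluated the same way in $M[G]$ and $V[G]$. Moreover $M[G]\subseteq V[G]$, so every dense subset of $\qo_\kappa$ lying in $M[G]$ also lies in $V[G]$, and $g$ meets it.

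The heart of the argument is clause (3), which I will handle with the fusion lemma (Lemma \ref{Lemma: fusion lemma}) combined with the generator bound $(\beth)$. Fix a dense open set $D\subseteq j(\po)\setminus(\kappa+1)$ in $M[G*g]$ and take a name $\dot D$ for it. Since $j$ is an extender ultrapower, $\dot D=j(f)(a)$ for some $a\in[\gamma]^{<\omega}$ with $\gamma=j(h)(\kappa)$ and some $f\in V$. Enumerate in $V$ all possible $f(x)$, as $x$ ranges over the relevant seeds, into a sequence $\langle d(\alpha):\alpha<\kappa\rangle$ of dense open subsets of $\po$; for each $\alpha$, $d(\alpha)$ is taken to meet the dense open sets named by the $f(x)$ with $x\in[h(\alpha)]^{<\omega}$. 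This is possible because $\po\setminus(\alpha+1)$ is $\min(I\setminus(\alpha+1))$-strategically closed (the footnote in Claim~\ref{Claim: fusion lemma, arranging clause 3}), and there are fewer than $\min(I\setminus\alpha+1)$ seeds below $h(\alpha)$ on a club. By density, some $p\in G$ has a $p^*\leq p$ with club $C$ as in the fusion lemma, and we may take $p^*\in G$. Applying $j$ and using $\kappa\in j(C)$, the set $\{r\in j(\po)_{\kappa+1}: r^\frown (j(p^*)\setminus\kappa+1)\in j(d)(\kappa)\}$ is dense below $p^*\restriction\kappa+1$ in $M$. Since $\gamma<\min(j(I)\setminus\kappa+1)$, $j(d)(\kappa)$ is contained in the dense set coded by $\dot D$. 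Hence some $r\in G*g$ forces $j(p^*)\setminus\kappa+1\in\dot D$, and this element lies in $H$. This step is the main obstacle: one has to line up the seeds below $\gamma$ with the $\kappa$ coordinate via $h$ in the correct bookkeeping.

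Once (1)--(3) are established, the lifting follows from Silver's criterion, since $j[G]\subseteq G*g*H$ by construction. The lift is defined by $j^*(\tau_G)=j(\tau)_{G*g*H}$ and is definable in $V[G*g]$ from $H$. For the closure clause, each name of a $\kappa$-sequence has a $\kappa$-sequence of names in $M$ by ${}^\kappa M\subseteq M$. It remains to see that $H$ and $g$ are available; here $\kappa^+$-closure or $\kappa^+$-c.c. of $\qo_\kappa$ ensures that $\kappa$-sequences in $V[G*g]$ are captured by names in $V[G]$, which transfer into $M[G*g]$, and this uses the standard argument.
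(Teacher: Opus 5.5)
\textbf{Clauses (1)--(3) and the lifting.} These follow the paper's route. You write the name as $j(f)(a)$ with $a\in[j(h)(\kappa)]^{<\omega}$, let $d(\alpha)$ diagonalize over the dense sets named by $f(\alpha,x)$ for $x\in[h(\alpha)]^{<\omega}$ using the strategic closure of the tail, apply the fusion lemma, and reflect through $\kappa\in j(C)$. Three slips should be fixed there.
\begin{enumerate}
\item In (1), stage $\kappa$ of $j(\po)$ is a nonstationary support limit because $\kappa$ is inaccessible in $M$, not because $\kappa\in j(I)$. In fact $\kappa\in j(I)$ fails in applications where the lemma is used with trivial $\qo_\kappa$, e.g.\ Mitchell order $0$ measures and clause (2) of Lemma \ref{Lemma: How embeddings lift after the splitting forcing}.
\item In (3), the inequality $h(\alpha)<\min(I\setminus(\alpha+1))$ holds on a set of measure one for the normal measure derived from $j$, not necessarily on a club. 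This is harmless: put $d(\alpha)=\po$ elsewhere.
\item You must take $C$ disjoint from $\supp(p^*)$, so that $\kappa\notin\supp(j(p^*))$ and $j(p^*)\restriction(\kappa+1)=p^{*\frown}\one\in G*g$. Otherwise density below $j(p^*)\restriction(\kappa+1)$ need not yield an element of $G*g$.
\end{enumerate}

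\textbf{The gap: the closure clause.} You say ``each name of a $\kappa$-sequence has a $\kappa$-sequence of names in $M$ by ${}^\kappa M\subseteq M$.'' That is the Easton-support argument, and it works there only because $\po\subseteq V_\kappa$ is $\kappa$-c.c., so nice names for ordinals are small.

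A nonstationary support iteration of length $\kappa$ is neither $\kappa$-c.c.\ nor $\kappa^+$-c.c. Its conditions live in $V_{\kappa+1}$. For example, in the splitting forcing with $\tau\geq 2$, varying the values along a nowhere stationary subset of $I$ of size $\kappa$ gives $2^\kappa$ pairwise incompatible conditions. So a nice $\po$-name for an ordinal can have size $\kappa^+$. Then ${}^\kappa M\subseteq M$ does not put such a name, let alone a $\kappa$-sequence of them, into $M$. A normal-measure ultrapower does not even contain $U$, which is a $\kappa^+$-sized subset of $V_{\kappa+1}$.

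\textbf{The fix.} What is missing is a second application of the fusion lemma, which is exactly what the paper does.
\begin{itemize}
\item Let $d(\alpha)$ be the set of conditions whose tail above $\alpha+1$ decides $\dot f\restriction\alpha$ over $V^{\po_{\alpha+1}}$. This is dense open because the tail is $\alpha^+$-strategically closed.
\item Fusion gives $p\in G$ and a club $C$ such that, for $\alpha\in C$, $f\restriction\alpha$ is the evaluation of a $\po_{\alpha+1}$-name $\dot g_\alpha$ of size $<\kappa$.
\item Then $\langle \dot g_\alpha:\alpha\in C\rangle\in M$, and $f=\bigcup_{\alpha\in C}(\dot g_\alpha)_G\in M[G]$.
\end{itemize}
Only once $M[G]$ is known to be closed under $\kappa$-sequences in $V[G]$ can you transfer the $\kappa^+$-c.c.\ or $\kappa^+$-closure of $\qo_\kappa$ from $M[G]$ to $V[G]$. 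After that, your standard argument for $g$, and then the evaluation of names by $H$, go through.
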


\begin{proof}
    For clause \eqref{Clause: G is generic over M}, note that, for every $\alpha<\kappa$, $\po_\alpha\in V_{\kappa}$. Therefore $j(\po)_{\kappa}$ is, over $M$, the nonstationary support limit of $\la \po_\alpha \colon \alpha<\kappa \ra$. The nonstationary support limit up to $\kappa$ is computed correctly in $M$ since $M$ is closed under $\kappa$-sequences of its elements living in $V$.

    \smallskip
    
    Clause \eqref{Clause: g is generic} holds since $M[G]\subseteq V[G]$. 

    \smallskip
    
    For clause \eqref{Clause: j[G] generates a generic}, 
    assume that $E\subseteq j(\po)\setminus \kappa+1$ is a $\po_{\kappa}*\dot{\qo}_\kappa$-name for a dense open subset of $j(\po)\setminus \kappa+1$. Write $E = j(f)(a)$ for some set of generators $a\in [j(h)(\kappa)]^{<\omega}$ and some $f\colon [\kappa]^{|a|}\to V$. We can assume that $\kappa = \min(a)$ and, for every $\vec{\nu}\in \dom(f)$, $f(\nu)$ is a $\po_{\min (\vec{\nu})+1}$-name for a dense open subset of $\po\setminus ( \min(\vec{\nu})+1 )$. Define, for every $\alpha<\kappa$, 
    $$d(\alpha) = \{ q\in \po \colon q\restriction \alpha \Vdash \forall \vec{\nu}\in [h(\alpha)]^{|a|} \ ( q\setminus \alpha+1\in f(\vec{\nu})) \}.$$
    We claim that $d(\alpha)$ is a dense open subset of $\po$. For that, it suffices to prove that $\po\setminus (\alpha+1)$ is forced to be $|h(\alpha)|^+$-distributive. Indeed, $\po\setminus (\alpha+1)$ is forced to be  $\min(I)\setminus (\alpha+1)$-strategically closed; this follows from the combination of the facts that the iteration is taken with respect to an $I$-spaced nonstationary support, and for each active forcing stage $\beta\in I\setminus (\alpha+1)$, $\dot{\qo}_\beta$ is forced to be $ \beta$-strategically closed (see the proof of Claim \ref{Claim: fusion lemma, arranging clause 3} for a similar argument).

    By the Lemma \ref{Lemma: fusion lemma}, there exists $p\in G$ and a club $C\subseteq \kappa$ such that, for every $\alpha\in C$,
    $$\{r\in \mathbb{P}_{\alpha+1}\colon r{}^\smallfrown p\setminus (\alpha+1)\in d(\alpha)\}$$
    is dense below $p\restriction (\alpha+1)$. Clearly this yields
    $$p\restriction \alpha+1 \Vdash \forall \vec{\nu}\in [h(\alpha)]^{|a|} \ ( p\setminus (\alpha+1)\in f(\vec{\nu})).  $$
    Since $\po$ is the nonstationary support limit of $\la \po_\alpha \colon \alpha<\kappa \ra$, we can assume that the club $C$ is disjoint from $\supp(p)$, by shrinking it if necessary. Since $C\subseteq \kappa$ is a club, we have $\kappa\in j(C)$. Therefore, $j(p)\restriction \kappa+1 = p^{\smallfrown} \one_{\qo_\kappa}\in G*g$, and thus, in $V[G*g]$, 
    $$j(p)\setminus \kappa+1 \in j(f)(a)_{G\ast g} = (E)_{G*g}$$
    as desired.

    Since every $p\in G$ has a nonstationary support and $H$ is generated by $j[G]\setminus \kappa+1$, we have that $j[G]\subseteq G*g*H$. By Silver's lifting criterion, $j$ lifts to an elementary embedding $j^*\colon V[G]\to M[G*g*H]$.

    Finally, let us assume that for every $\alpha\in I$, $\Vdash_{{\po}_\alpha} {`` \dot{\qo}_\alpha \text{ is }\alpha^+-c.c. "}$. We argue that $M[G*g*H]$ is closed under $\kappa$-sequences in $V[G*g]$. 

    \begin{claim}\label{Claim: closure under kappa sequneces and fusion}
        $M[G]$ is closed under $\kappa$-seuqneces in $V[G]$.
    \end{claim}

    \begin{proof}
    Assume that $f\colon \kappa\to \text{Ord}$ is a sequence in $V[G]$. Let $\dot{f}$ be a $\po$-name for it. By Lemma \ref{Lemma: fusion lemma}, there exists $p\in G$ and a club $C\subseteq \kappa$ such that for every $\alpha\in C$,
    $$p\restriction_{\alpha+1} \Vdash \exists g_\alpha \in ({Ord})^\alpha \  \left( p\setminus \alpha+1 \Vdash f\restriction \alpha = g_\alpha \right).$$
    This is due to the fact that the forcings $\po\setminus (\alpha+1)$ are $\alpha^+$-strategically closed, so the sets
    $$ d(\alpha) = \{ q\in \po\setminus \alpha+1 \colon q \text{ decides } \dot{f}\restriction \alpha \}  $$
    are dense and open. 

    Next, for every $\alpha\in C$, fix a $\po_{\alpha+1}$-name $\dot{g}_\alpha$, forced by $p\restriction_{\alpha+1}$ to be the set $g_\alpha\in V^{\po_\alpha}$ above. Since $M$ is closed under $\kappa$-sequences in $V$, $\langle \dot{g}_\alpha \colon \alpha\in C \rangle\in M$. Thus, $(\dot{f})_G$ can be reconstructed inside $M[G]$ from $G$ and $\langle \dot{g}_\alpha \colon \alpha\in C \rangle$ as the union $$\bigcup_{\alpha\in C} (g_\alpha)_{G\restriction \alpha+1}.$$
    This concludes the proof of the claim.
    \end{proof}

    Next, we argue that $M[G*g]$ is closed under $\kappa$-sequences in $V[G*g]$. This is clear if $\qo_\kappa$ is $\kappa^+$-c.c. or $\kappa^+$-closed in $V[G]$ (for the $\kappa^{+}$-c.c. case, see \cite[Proposition 8.4]{CummingsHandbook}). In turn, either of these properties hold true in $V[G]$ because $M[G]$ is closed under $\kappa$-sequences in $V[G]$, and the fact that $\qo_\kappa$ is either $\kappa^+$-c.c. or $\kappa^+$-closed in $M[G]$ by our assumption. 

    Finally, the fact that $M[G*g*H]$ is closed under $\kappa$-sequences in $V[G*g]$ follows immediately from the fact that $M[G*g]$ is closed under $\kappa$-sequences in $V[G*g]$.\end{proof}

To conclude this preliminary section, we mention two theorems that will be key to our forcing analysis. The first is Laver’s \emph{Ground Model Definability Theorem}, and the second is Hamkins’ \emph{Gap Forcing Theorem}.

\begin{theorem}[Laver's Ground Model Definability Theorem \cite{Lavergroundmodeldefinability}]\label{thm: Laver ground model definability thm}
    Suppose that $\po\in V$ is a forcing notion and $G\subseteq \po$ is generic over $V$. Then $V$ is a definable class of $V[G]$ from a parameter in $V$.
\end{theorem}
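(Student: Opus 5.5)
The natural route is the now-standard Bukovský–Woodin/Reitz-style argument: first show that $V$ has the $\delta$-approximation and $\delta$-cover properties inside $V[G]$ for $\delta=|\po|^+$, and then use the uniqueness theorem for grounds with those two properties. Recall that $V\subseteq V[G]$ has the \emph{$\delta$-cover property} if every set $A\subseteq V$ in $V[G]$ with $|A|^{V[G]}<\delta$ is contained in some $B\in V$ with $|B|^V<\delta$. It has the \emph{$\delta$-approximation property} if every $A\subseteq V$ in $V[G]$ such that $A\cap b\in V$ for all $b\in V$ with $|b|^V<\delta$ is already in $V$.

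The cover property is routine. Given a name $\dot f$ for a function from an ordinal $\gamma<\delta$ into the ordinals, we let $B$ be the set of all possible values of $\dot f(\xi)$, taken over all conditions and all $\xi<\gamma$. This $B$ lies in $V$ and has size at most $|\po|\cdot\gamma<\delta$.

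For the approximation property, we argue by contradiction. Suppose $A\subseteq\ord$ with $A\in V[G]\setminus V$, yet every small piece of $A$ lies in $V$. Fix a name $\dot A$ and a condition forcing these properties. For each $q$ below it, there is an ordinal $\alpha_q$ whose membership in $\dot A$ the condition $q$ fails to decide. If instead $q$ decided all of $\dot A$, then $A$ would be definable in $V$. Collect these $\alpha_q$ into a set $b\in V$ of size at most $|\po|<\delta$. By hypothesis $A\cap b=a\in V$, so some condition $r\in G$ forces $\dot A\cap\check b=\check a$. But then $r$ decides membership of $\alpha_r\in b$, a contradiction.

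The final step is the one needing care. We claim that for any regular $\delta$, a ground $W\subseteq V[G]$ having the $\delta$-cover and $\delta$-approximation properties, together with the correct $P(\delta)^W$, is uniquely determined. The proof is by $\in$-induction on sets of ordinals, through their $\delta$-small pieces. Suppose $W,W'$ both qualify and $P(\delta)^W=P(\delta)^{W'}$. Then one first shows that $[\theta]^{<\delta}$ agrees between $W$ and $W'$ for every $\theta$. This uses cover in both directions, plus the agreement on $P(\delta)$ transferred along bijections of size ${<}\delta$ that lie in both models. Approximation then gives that every set of ordinals lies in $W$ if and only if it lies in $W'$.

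Hence $V$ is defined in $V[G]$ as the unique transitive class model of ZFC with these properties and with $P(\delta)^V=r$. The parameter is $r=P(\delta)^V$, which is an element of $V$, and the definition is first-order after working in large $V[G]_\theta$ and taking unions. The main obstacle is making the uniqueness argument's agreement on small sets fully rigorous, and expressing the class definition uniformly in first-order terms.
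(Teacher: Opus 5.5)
The paper gives no proof of this statement. It is quoted from Laver and used only as a black box, to identify $j'(V)$ with $M$ in Lemma~\ref{Lemma: How embeddings lift after the splitting forcing}. Your route is the standard one: establish the $\delta$-cover and $\delta$-approximation properties for $\delta=|\po|^+$, then invoke uniqueness of such grounds given $P(\delta)$. Your cover and approximation arguments are sound, with one small repair. You cannot literally fix a condition forcing ``$\dot A\notin V$'', since $V$ is not yet expressible in the forcing language. Instead, observe that $D=\{q\colon \exists x\,(q\forces \dot A=\check x)\}\in V$ is disjoint from $G$. So some $p\in G$ has no extension in $D$, and you can run your argument below $p$.

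The genuine gap is in the uniqueness step, which is the heart of the theorem. Alternating covers $a\subseteq B_0\subseteq B'_0\subseteq B_1\subseteq\cdots$ with $B_n\in W$ and $B'_n\in W'$ does not produce a set lying in both models. The sequence exists only in $V[G]$, and nothing puts its union into $W$ or $W'$. So there are no ``bijections of size ${<}\delta$ lying in both models'' along which to transfer $P(\delta)$.

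What works is to interleave covers for $\delta$ steps, taking unions at limits; these unions have size ${<}\delta$ since $\delta$ is regular in $V[G]$. Let $C=\bigcup_{i<\delta}B_i=\bigcup_{i<\delta}B'_i$. For $b\in W$ with $|b|^W<\delta$, regularity gives $C\cap b=B_i\cap b\in W$ for all large $i$. So approximation yields $C\in W$, and symmetrically $C\in W'$.

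But $C$ now has size $\delta$ in general, not ${<}\delta$. Transferring $P(\delta)^W=P(\delta)^{W'}$ to subsets of $C$ therefore requires a bijection $g\colon\delta\to\mathrm{ot}(C)$ in $W$. Its order-code is a subset of $\delta$, hence lies in $W'$, where it decodes back to $g$. This needs $\mathrm{ot}(C)<(\delta^+)^W$, and symmetrically $\mathrm{ot}(C)<(\delta^+)^{W'}$. Hence your uniqueness claim must carry the hypothesis $(\delta^+)^W=(\delta^+)^{W'}=(\delta^+)^{V[G]}$, and your defining formula for $V$ should include it. $V$ satisfies it, and $\delta$ is regular in $V[G]$, because $\po$ is $\delta$-c.c.

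With this added, $a\in W'$ for every $a\in W$ with $|a|^W<\delta$. Approximation then gives agreement on all sets of ordinals, as you say, and the localization to $V[G]_\theta$ goes through.
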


\begin{theorem}[Hamkins' Gap Forcing Theorem \cite{HamkinsGap}]\label{thm: Gap forcing theorem}
    Suppose that $\po$ is a forcing notion which has a gap at some cardinal $\delta$; namely, $\po$ can be factored in the form $\po = \po_0*\dot{\po}_1$, where $\po_0$ is nontrivial,  $|\po_0|<\delta$ and $\Vdash_{\po_0} ``\dot{\po}_1 \text{ is }(\delta+1)\text{-strategically closed."}$ Let $G\subseteq \po$ be generic over $V$, and assume that $j^*\colon V[G]\to M^*$ is an elementary embedding with critical point $\kappa > \delta$, such that $M^*\subseteq V[G]$ and $M^*$ is closed under $\delta$-sequences of its elements that belong to $V[G]$. Then:
		\begin{enumerate}
			\item $M^*$ has the form $M[H]$, where $M\subseteq V$ and $H = j^*(G)$ is $j^*(\po)$-generic over $M$. Furthermore, $M = V\cap M[H]$.
			\item If $j^*$ is definable in $V[G]$ from parameters, then $j = j^*\restriction V\colon V\to M$ is definable in $V$ from parameters. \end{enumerate}
\end{theorem}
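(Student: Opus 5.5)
The plan is to follow Hamkins' original route through the \emph{approximation} and \emph{cover} properties. Fix the factorization $\po=\po_0*\dot{\po}_1$ and write $G=g_0*g_1$.

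\textbf{Step 1.} I will first show that the extension $V\subseteq V[G]$ has the following two properties.
\begin{itemize}
\item $\delta^+$-cover property: every set of ordinals in $V[G]$ of size at most $\delta$ is covered by a set in $V$ of size at most $\delta$.
\item $\delta^+$-approximation property: if $A\subseteq\gamma$ lies in $V[G]$ and $A\cap a\in V$ for every $a\in V$ with $|a|^V\leq\delta$, then $A\in V$.
\end{itemize}
The cover property is routine. The stage $\po_1$ adds no new $\delta$-sequences over $V[g_0]$ by its $(\delta+1)$-strategic closure, and $\po_0$ has size less than $\delta$, so a nice-name argument produces the cover.

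For the approximation property, I argue by contradiction. Suppose such an $A$ is not in $V$. First note that $A\in V[g_0]$. Otherwise one runs the strategic-closure game for $\dot{\po}_1$ for $\delta$ many steps, at each step splitting the decision about $A$ on a new small set. This yields an $a\in V[g_0]$ of size $\delta$, and hence by cover one in $V$, on which $A\cap a\notin V[g_0]$, a contradiction. This is the step where the hypotheses $|\po_0|<\delta$ and $(\delta+1)$-strategic closure are both used. Then $A\in V[g_0]\setminus V$. Since $\po_0$ is nontrivial and small, I can take more than $|\po_0|$ disjoint small pieces, none decided in $V$, and apply pigeonhole over conditions in $g_0$ to obtain a contradiction.

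\textbf{Step 2.} Next I identify $M$. Set $M=\bigcup_{\alpha} j^*(V_\alpha)$, that is, $M=j^*(V)$ computed via Theorem \ref{thm: Laver ground model definability thm}. By elementarity, $M\subseteq M^*$ has the $\delta^+$-approximation and cover properties inside $M^*$. Moreover, $j^*(\po_0)=\po_0$ and $j^*(g_0)=g_0$, because $\crit(j^*)=\kappa>\delta>|\po_0|$.

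To show $M\subseteq V$, it suffices by $\in$-induction to show that every set of ordinals $x\in M$ lies in $V$. Here I apply the approximation property of $V$ in $V[G]$. Given $a\in V$ of size at most $\delta$, the set $a\in V[G]$ belongs to $M^*$ by $\delta$-closure. Cover in $M^*$ then replaces $a$ by some $b\in M$ of size at most $\delta$. Now $x\cap b\in M$, and by induction on rank, or by an induction on the size of the approximating sets, $x\cap b\in V$. Hence $x\cap a\in V$.

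For the converse, $V\cap M^*\subseteq M$, I run the symmetric argument using approximation of $M$ inside $M^*$. The point is that small pieces of a set of ordinals in $V\cap M^*$ are in $V$, and by closure they are also in $M^*$ and in $M$. I expect this two-sided bookkeeping to be the main obstacle. The care needed is to carry out the inductions so that the $\delta$-closure of $M^*$ is only ever applied to sets of size at most $\delta$.

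\textbf{Step 3.} Every element of $M^*$ has the form $j^*(\tau)_{j^*(G)}$ with $j^*(\tau)\in M$. Hence $M^*=M[H]$ with $H=j^*(G)$, and genericity of $H$ over $M$ follows from elementarity. This proves (1).

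\textbf{Step 4.} For (2), I use the approximation property again. For each $\lambda$, the restriction $j\restriction\mathcal{P}(\lambda)^V$ belongs to $V$, because its pieces on small sets lie in $V$. Its definition in $V[G]$ from parameters can then be made uniform using the definability of $V$ from Theorem \ref{thm: Laver ground model definability thm}. Finally, the usual argument that each $\delta^+$-approximated piece is determined in $V$ by forcing over $\po$ yields a definition of $j$ in $V$ from parameters.
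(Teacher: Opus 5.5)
The paper does not prove this statement; it quotes it from Hamkins. Your architecture is Hamkins' route: approximation and cover for $V\subseteq V[G]$, transfer to $M\subseteq M^*$, then $M=V\cap M^*$, $M^*=M[H]$, and amenability. The proof fails, however, at its core step, the first half of Step 1.

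\textbf{Step 1.} Since $\dot{\po}_1$ is $(\delta+1)$-strategically closed in $V[g_0]$, it adds no new subsets of any set of size $\le\delta$. So $A\cap a\in V[g_0]$ for every such $a$, and the contradiction ``$A\cap a\notin V[g_0]$'' can never be produced by a run of the game inside $V[g_0]$. More importantly, your argument for $A\in V[g_0]$ never uses that $\po_0$ is nontrivial, nor that the pieces lie in $V$ rather than in $V[g_0]$. Without these the claim is false: take $\po_0$ trivial and $\dot{\po}_1=\Add(\lambda,1)$ for a regular $\lambda>\delta$; the generic subset of $\lambda$ is not in $V$, yet all its pieces of size $\le\delta$ are. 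You invoke nontriviality only in the second half, where it is not needed (pick, for each $p\in\po_0$, an ordinal $\gamma_p$ whose membership in $\dot A$ is undecided by $p$, and look at $A\cap\{\gamma_p : p\in\po_0\}$).

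The missing idea is the key lemma argued in $V$ on $\po_0$-names, below an arbitrary condition:
\begin{enumerate}
\item Fix a name for Player II's strategy and list the at most $|\po_0|$ pairs $(r,r')$ of incompatible conditions of $\po_0$. Build a run of the game on names.
\item At the stage for $(r,r')$, neither $(r,\dot q)$ nor $(r',\dot q)$ decides $\dot A$. An easy argument then gives $r_1\le r$, $r_1'\le r'$, extensions of the current name, and an ordinal $\gamma$ such that one side forces $\gamma\in\dot A$ and the other forces $\gamma\notin\dot A$.
\item Let Player I play the name mixing these two extensions over $\{r_1,r_1'\}$.
\item Let $a\in V$ collect these $\gamma$'s, and take a final lower bound deciding $\dot A\cap a$. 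Incompatible conditions then force different values, so $g_0$ is definable in $V$ from $A\cap a\in V$, contradicting nontriviality.
\end{enumerate}
This yields approximation by pieces of size $\le|\po_0|$ in one stroke.

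\textbf{Step 2.} There is a second hole exactly where you suspected. You reduce to showing that a small set $x\cap b\in M$ lies in $V$, but neither rank nor size drops, so no induction applies. The issue is real: $V[G]$ has small sets of ordinals outside $V$ all of whose bounded pieces are in $V$ (e.g.\ $\{\theta_n : n\in r\}$ when $\po_0$ adds a real $r$).

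What works is a common small cover:
\begin{enumerate}
\item Build a continuous chain $\la z_\xi : \xi<\mu\ra$ with $\mu=|\po_0|^+\le\delta$, alternately taking covers in $V$ (inside $V[G]$) and in $M$ (inside $M^*$).
\item Its union $Z$ has size $\le\delta$, so $Z\in M^*$.
\item Every piece $Z\cap b$ with $|b|\le|\po_0|$ stabilizes along the chain. The sharpened approximation property, holding for $M\subseteq M^*$ by elementarity, then gives $Z\in V\cap M$.
\item Now $\pi_Z[x\cap b]\subseteq\mathrm{ot}(Z)<\kappa$ and $V_\kappa^M=V_\kappa^V$, so $x\cap b\in V$.
\end{enumerate}
Symmetrically, small sets of $V$ lie in $M$, which also yields $V\cap M^*\subseteq M$. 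The $\delta^+$-approximation property you state is too weak for this: it forces a chain of length $\delta^+$, whose union need not lie in $M^*$.

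\textbf{Step 3.} Not every element of $M^*$ has the form $j^*(\tau)_{j^*(G)}$. Instead, applying elementarity to ``every set is $\tau_G$ for some $\tau\in V$'' shows that every element of $M^*$ is $\sigma_{j^*(G)}$ for some $\sigma\in M$.
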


\subsection{The splitting forcing}\label{Section: splitting forcing}
		In this section we outline, for later use, a variation of the \emph{splitting forcing} \label{page: splitting forcing} introduced by the second author in \cite{Kaplan}. 
        
        For the rest of this section, we fix a measurable cardinal $\kappa$, and for each $\eta<\kappa^{+}$ let $f_\eta \colon \kappa \to \kappa$ be the $\eta$-th canonical function.\footnote{For the explicit definition of $\la f_\eta \colon \eta<\kappa^+ \ra$ we refer the reader to \cite[Lemma 24.5]{Jech}.} To  streamline notation, we also denote by $f_{\kappa^+}$ the  function $f_{\kappa^+}\colon \alpha \mapsto |\alpha|^+$.

\smallskip

Fix an ordinal $\tau<\kappa^+$ and $I\s \kappa$ a stationary set consisting of inaccessible cardinals. Define an $I$-spaced nonstationary support iteration $$\po^{\tau, I} = \la \po^\tau_\alpha, \dot{\qo}^\tau_\alpha \colon \alpha<\kappa \ra,$$ where, for each $\alpha<\kappa$, we distinguish between the following cases:
\begin{itemize}
    \item If $\alpha\in I$, $\po^\tau_\alpha$ forces that $\dot{\qo}^\tau_\alpha$ is an atomic forcing of the form $$\textstyle \{ \one_{\qo_\alpha} \}\cup f_\tau(\alpha),$$ where the ordinals in $f_{\tau}(\alpha)$ are declared to be an antichain in  $\dot{\mathbb{Q}}^\tau_\alpha$.
    \item Else, $\dot{\qo}^\tau_\alpha$ is forced by $\mathbb{P}_\alpha^\tau$ to be trivial. 
\end{itemize}

Given $G\subseteq \po^{\tau. I} $ generic over $V$, $\bigcup G$ naturally defines a function in  $\prod_{\alpha<\kappa} f_\tau(\alpha)$, which maps each $\alpha<\kappa$ to the unique common value $p(\alpha)\in f_{\tau}(\alpha)$ for conditions $p\in G$ such that $\alpha\in \supp(p)$.
Note that $\mathbb{P}^{\tau,I}$ satisfies (1)--(3) of Lemma~\ref{Lemma: fusion lemma}. In particular, Lemma \ref{Lemma: lifting elementary embeddings with fusion} will be applicable to $\po^{\tau,I}$  whenever we are presented with an appropriate extender ultrapower $j\colon V\rightarrow M$. Also,   $\po^{\tau,I}$ has a gap at any large enough cardinal that is a  successor of an inaccessible caridnal below $\kappa$.

\smallskip

The following theorem is a variation on one of the main results of \cite{Kaplan}. 

\begin{theorem}
\label{thm: properties of the splitting forcing}
    Assume that $\kappa$ is a measurable cardinal and $I\subseteq \kappa$ is a stationary set of inaccessible cardinals. Let $\tau\leq \kappa^+$ and $\po = \po^{\tau, I}$ be the above forcing. 
    Then the following hold:
    \begin{enumerate}
        \item $\po$ preserves both $\kappa$ and $ \kappa^+$ as  cardinals and it also preserves the measurability of $\kappa$.  Assuming $\mathrm{GCH}$, $\po$ preserves cardinals.
        \item \label{Clause: splitting forcing lifting measures} For every normal measure $U\in V$ on $\kappa$,
        \begin{enumerate}
            \item If $I\in U$, there are $\tau$ distinct lifts of $U$ to normal measures on $\kappa$ in $V[G]$, $\la U^*_\eta \colon \eta<\tau \ra$, where, for every $\eta<\kappa$, $U^*_\eta$ is generated in $V[G]$ from $U\cup \{S_\eta\}$, where 
        $$S_\eta = \{ \alpha<\kappa \colon \left(\bigcup G\right)(\alpha) = f_\eta(\alpha) \}. $$
            \item If $I\notin U$, $U$ generates a normal measure $U^*\in V[G]$ on $\kappa$ in $V[G]$.
        \end{enumerate}
        \item For every normal measure $W\in V[G]$ on $\kappa$,
        \begin{enumerate}
            \item If $I\in W$, there exists $U\in V$ with $I\in U$, and some $\eta<\tau$, such that $W = U^*_\eta$. 
            \item If $I\notin W$, there exists $U\in V$ such that $I\notin U$ and $W = U^*$.
        \end{enumerate}
    \end{enumerate}
\end{theorem}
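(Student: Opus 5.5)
We will prove the three clauses in order, treating the lifting clause as the engine and the "no other measures" clause as the main difficulty.

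\textbf{Clause (1).} We get cardinal and GCH preservation from Lemma \ref{Lemma: nonstatsupport iterations preserving GCH}, since each $\dot{\qo}^\tau_\alpha$ is an atomic forcing of size $<\alpha^+$ and so preserves cardinals. Without GCH, the fusion argument in the claim inside that lemma still shows that $\kappa^+$ is preserved, and the gap argument shows that $\kappa$ is preserved. Preservation of measurability of $\kappa$ will then follow from clause (2).

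\textbf{Clause (2).} Fix a normal measure $U$, let $j=j_U\colon V\to M$, and note that $M$ is closed under $\kappa$-sequences. Since $f_\tau(\kappa)$-type bounds give $j(f_{\kappa^+})(\kappa)=\kappa^+<\min(j(I)\setminus\kappa+1)$, Lemma \ref{Lemma: lifting elementary embeddings with fusion} applies with $h$ trivial. At stage $\kappa$ of $j(\po)$ we split into two cases.
\begin{enumerate}
\item If $I\in U$, then $\kappa\in j(I)$ and $\qo_\kappa$ is the atomic forcing on $j(f_\tau)(\kappa)=\tau$. For each $\eta<\tau$ the generic $g=\{\eta\}$ exists in $V[G]$, so Lemma \ref{Lemma: lifting elementary embeddings with fusion} yields a lift $j^*_\eta$ definable in $V[G]$. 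We take $U^*_\eta=\{X:\kappa\in j^*_\eta(X)\}$.
\item If $I\notin U$, then $\qo_\kappa$ is trivial, and we get a single lift $U^*$.
\end{enumerate}
In the first case, $S_\eta\in U^*_\eta$ because $j^*_\eta(\bigcup G)(\kappa)=\eta=j(f_\eta)(\kappa)$. The sets $S_\eta$ are pairwise disjoint modulo bounded sets, so the $U^*_\eta$ are distinct.

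For the generation claim, take $X\in U^*_\eta$ with a name $\dot X$. Using Lemma \ref{Lemma: fusion lemma} with $d(\alpha)$ the set of conditions deciding "$\alpha\in\dot X$" modulo the stage-$\alpha$ generic, we find $p\in G$, a club $C$, and a function $\alpha\mapsto A_\alpha\subseteq f_\tau(\alpha)$ in $V$ that records which values $\bigcup G(\alpha)$ force $\alpha\in\dot X$. Here we use that the stage $\alpha+1$ part beyond the coordinate $\alpha$ is captured by $G_\alpha$, and we must handle this carefully. Then $\{\alpha\in C: \eta'\in A_\alpha\text{ for }\bigcup G(\alpha)=f_\eta(\alpha)\}$, intersected with $S_\eta$, is contained in $X$, and the reflection set lies in $U$ by normality. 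The case $I\notin U$ is analogous, except that $\kappa\notin j(I)$ means no splitting occurs.

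\textbf{Clause (3).} Let $W\in V[G]$ be normal, with $j_W\colon V[G]\to M^*$. The forcing has a gap below $\kappa$, and $M^*$ is $\kappa$-closed, so Theorem \ref{thm: Gap forcing theorem} applies. It gives $j=j_W\restriction V\colon V\to M$, definable in $V$, with $j_W(G)$ generic over $M$. The main obstacle is to show that $j$ is the ultrapower by a normal measure $U$ on $\kappa$ in $V$, i.e.\ $M=\{j(f)(\kappa)\}$. The plan is as follows.
\begin{enumerate}
\item Every element of $M^*$ has the form $j_W(F)(\kappa)$ with $F\in V[G]$.
\item By Lemma \ref{lemma: capturing functions via fusion}, on a club each $F(\alpha)$ lies in a ground-model set $F'(\alpha)$ of size $|\po_{\alpha+1}|<$ the next inaccessible.
\item So $j_W(F)(\kappa)$ is determined by $j(F')(\kappa)$ together with $j_W(G)\restriction(\kappa+1)=G*g$.
\item Here $g$ is a single ordinal $\eta<j(f_\tau)(\kappa)$, which equals $j(f_\eta)(\kappa)$ for some $\eta<\tau$, since $j(f_\tau)(\kappa)=\tau$ for $\tau<\kappa^+$. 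When $\tau=\kappa^+$ we use $j(f_{\kappa^+})(\kappa)=\kappa^+$, which is also what bounds the index by $\tau$.
\item Hence the map $k\colon\Ult(V,U)\to M$ given by $[f]\mapsto j(f)(\kappa)$, where $U=\{A:\kappa\in j(A)\}$, is surjective onto the ground part. Any ordinal in $M$ lies in $M^*$, hence is $j_W(F)(\kappa)$ and is computed in $M$ from $\kappa$-indexed data, so $j=j_U$.
\end{enumerate}
With $j=j_U$ in hand, $W$ is the lift determined by $g$. This gives $W=U^*_\eta$ when $\kappa\in j(I)$, i.e.\ $I\in W$, and $W=U^*$ otherwise, completing clause (3).
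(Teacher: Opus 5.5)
\textbf{Gap in the generation claim of clause (2).} Your argument that $U\cup\{S_\eta\}$ generates $U^*_\eta$ (and that $U$ generates $U^*$ when $I\notin U$) does not work as written. After fusion, whether $\alpha\in X$ is decided by $G_{\alpha+1}=G_\alpha*G(\alpha)$. So the set $A_\alpha$ of ``good'' values at coordinate $\alpha$ is only a $\po_\alpha$-name, not an element of $V$. Consequently the set you intersect with $S_\eta$ lives in $V[G]$, and ``the reflection set lies in $U$ by normality'' has no content. What you need is a ground-model $B$ with $\kappa\in j_U(B)$, and your sketch never connects the hypothesis $\kappa\in j^*_\eta(X)$ to such a $B$.

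The missing idea is to use how the lifted generic is built. By Lemma \ref{Lemma: lifting elementary embeddings with fusion}, $H_\eta$ is generated by $G*\{\eta\}$ together with $j_U[G]\setminus(\kappa+1)$. Hence a single $p\in G$ satisfies
$$p^{\frown}\{\langle\kappa,\eta\rangle\}^{\frown}(j_U(p)\setminus(\kappa+1))\Vdash\check\kappa\in j_U(\dot X).$$
By {\L}o\'s,
$$B=\{\alpha<\kappa\colon (p\restriction\alpha)^{\frown}\{\langle\alpha,f_\eta(\alpha)\rangle\}^{\frown}(p\setminus(\alpha+1))\Vdash\check\alpha\in\dot X\}\in U.$$
Moreover $B\cap S_\eta\subseteq X$, because for $\alpha\in S_\eta$ the modified condition lies in $G$. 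This is the paper's argument; it needs no fusion beyond what is already inside the lifting lemma. The same argument with a trivial stage $\kappa$ handles $I\notin U$.

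Your fusion route can be rescued only by adding essentially this step: pick $p'\in G$ forcing $\check\eta\in j_U(\langle\dot A_\alpha\colon\alpha<\kappa\rangle)(\kappa)$ over $M$, and reflect $\{\alpha\colon p'\restriction\alpha\Vdash f_\eta(\alpha)\in\dot A_\alpha\}$.

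\textbf{Clause (3): a different and heavier route.} You try to prove $j_W\restriction V=j_U$ via the factor map $k$ and Lemma \ref{lemma: capturing functions via fusion}, and then invoke uniqueness of lifts (Lemma \ref{Lemma: How embeddings lift after the splitting forcing}). The paper never needs $j_W\restriction V=j_U$. Gap Forcing gives $U=W\cap V\in V$, and normality gives $\eta=j_W(G)(\kappa)<\tau$ with $S_\eta\in W$. So $W\supseteq U\cup\{S_\eta\}$, which already generates the ultrafilter $U^*_\eta$; hence $W=U^*_\eta$, and likewise $W=U^*$ when $I\notin W$. In the paper, (3) is thus a two-line corollary of the generation claim, which is another reason that claim must be done correctly.

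Your route does avoid generation, but step 5 is incomplete. Since $G\notin M$, ``computed in $M$'' is not why $j_W(F)(\kappa)\in\mathrm{ran}(k)$. You need one of the following:
\begin{enumerate}
\item $\crit(k)>|j_U(\po)_{\kappa+1}|^{M_U}$. This holds because $\mathcal P(\kappa)^M=\mathcal P(\kappa)^V$ (from $M=V\cap M^*$), so $k$ fixes every ordinal below $((2^\kappa)^+)^{M_U}$, giving $j(F')(\kappa)=k[j_U(F')(\kappa)]$. This is the factor-map argument in the proof of Claim \ref{claim: a unique normal measure}, but without GCH you must use this bound rather than $\kappa^+$.
\item Alternatively, $k$ is the identity on $j_U(\po)_{\kappa+1}$ and therefore lifts to $M_U[G*\{\eta\}]$.
\end{enumerate}
With that supplied, your argument works and yields the extra information $j_W\restriction V=j_U$.

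A minor point: the $S_\eta$ are disjoint modulo nonstationary sets, not bounded sets. Distinctness of the $U^*_\eta$ is immediate anyway from $j^*_\eta(\bigcup G)(\kappa)=\eta$.
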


\begin{remark}
    A situation of particular interest is when $I$ is the set of all inaccessible cardinals below $\kappa$. In this case, the splitting forcing produces a generic extension in which every normal measure on $\kappa$ has the form $U^*_\eta$ for some normal measure $U\in V$ on $\kappa$ and $\eta<\tau$. In particular, if the ground model carries a unique normal measure on $\kappa$ (for instance, the ground model is $L[U]$), the generic extension has exactly $\tau$ normal measures on $\kappa$. This provides an alternative proof for the Friedman-Magidor Theorem (\cite{FriedmanMagidor}), showing that, consistently from a measurable cardinal, for every $\tau\leq \kappa^{++}$ there exists a measurable cardinal $\kappa$ with $\tau$ normal measures.\footnote{The case $\tau = \kappa^{++}$ follows from the Kunen-Paris Theorem (see \cite{KunenParis}).}
\end{remark}

\begin{lemma}\label{Lemma: How embeddings lift after the splitting forcing}
    Let $j\colon V\to M$ be an elementary embedding satisfying the assumptions of Lemma \ref{Lemma: lifting elementary embeddings with fusion}. Suppose that $\tau\leq \kappa^{+}$, and let $\po = \po^{\tau,I}$. Suppose $G\subseteq \po$ is generic over $V$. Then:
    \begin{enumerate}
    
        \item If $\kappa\in j(I)$,  $j$ lifts to $V[G]$ in exactly $\tau$ ways. That is, 
            \begin{enumerate}
            \item \label{Clause: eta lifts}for every $\eta<\tau$, there exists $H_\eta\subseteq j(\po)$ generic over $M$ and an elementary embedding $j^*_\eta \colon V[G]\to M[H_\eta]$ extending $j$.
            \item \label{Clause: every lift is one of eta many}every elementary embedding $j'\colon V[G]\to M'$ that extends $j$ has the form $j^*_\eta$ for some $\eta<\tau$ (in particular, $M' = M[H_\eta]$). 
            \end{enumerate}        

        \item If $\kappa\notin j(I)$,  $j$  uniquely lifts to $V[G]$. That is, 
            \begin{enumerate}
            \item \label{Clause: unique lift - existence}there exist $H\subseteq j(\po)$ generic over $M$ and an elementary embedding $j^*_\eta \colon V[G]\to M[H]$ extending $j$.
            \item \label{Clause: unique lift - uniqueness} every elementary embedding $j'\colon V[G]\to M'$ that extends $j$ is equal to $j^*$ (in particular, $M'= M[H]$).
            \end{enumerate}        
    
    \end{enumerate}
\end{lemma}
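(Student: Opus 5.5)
The plan is to combine Lemma \ref{Lemma: lifting elementary embeddings with fusion} with an analysis of the $\kappa$-th coordinate of $j(\po)$. Write $\qo_\kappa=(\dot{\qo}_\kappa)_G$, where $\dot{\qo}_\kappa=j(\la\dot{\qo}^\tau_\alpha\colon\alpha<\kappa\ra)(\kappa)$. If $\kappa\in j(I)$, then $\qo_\kappa$ is the atomic forcing $\{\one\}\cup j(f_\tau)(\kappa)$. Since $f_\eta$ is the $\eta$-th canonical function, $j(f_\tau)(\kappa)=\tau$ for $\tau<\kappa^+$; this holds because $M$ is closed under $\kappa$-sequences, so canonical functions are computed correctly. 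For $\tau=\kappa^+$ we have $j(f_{\kappa^+})(\kappa)=(\kappa^+)^M=\kappa^+$. So $\qo_\kappa$ is an antichain of size $\tau$ below the top element, and a generic $g$ is simply a choice of some $\eta<\tau$; such a $g_\eta$ already lies in $V[G]$. If instead $\kappa\notin j(I)$, then $\qo_\kappa$ is trivial and $g$ is trivial.

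\textbf{Existence.} For each such $g$ (either $g_\eta$ or the trivial one), apply Lemma \ref{Lemma: lifting elementary embeddings with fusion}. Because $g\in V[G]$, the set $H'$ generated by $j[G]\setminus\kappa+1$ is generic over $M[G*g]$. Put $H_\eta=G*g_\eta*H'$; then Silver's criterion yields $j^*_\eta\colon V[G]\to M[H_\eta]$, definable in $V[G]$. The $\tau$ lifts are pairwise distinct. Indeed, if $\dot{b}$ names $\bigcup G$, then $j^*_\eta(\bigcup G)(\kappa)=\eta$, which is exactly $S_\eta\in$ the derived measure. This gives clause (1)(a) and clause (2)(a).

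\textbf{Uniqueness.} Let $j'\colon V[G]\to M'$ extend $j$, and set $K=j'(G)$. First, $K$ is $j(\po)$-generic over $M$ with $j[G]\subseteq K$, because $j'(p)=j(p)\in K$ for $p\in G$. Next, $M'=\{j'(\dot x)_{K}\}=M[K]$, since every element of $V[G]$ is $\dot x_G$ for some $\dot x\in V$, and $j'(\dot x_G)=j(\dot x)_{K}$. Now split $K$ as $K_\kappa*k*K^{tail}$. The first factor is $K_\kappa=G$: conditions in $\po$ have bounded-below-$\kappa$ initial segments with $j(p)\restriction\kappa=p$, so $K\restriction\kappa\supseteq G$, and two generics for the same poset with one containing the other are equal. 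The $\kappa$-th factor $k$ is a generic for $\qo_\kappa$, hence $k=g_\eta$ for a unique $\eta<\tau$ (or trivial). For the tail, pick $p\in G$ with $\kappa$ in a club disjoint from $\supp(p)$ (as in the lifting proof); then $j(p)\setminus\kappa+1\in K^{tail}$ generates the filter $H'$. Since $H'$ is already generic over $M[G*k]$ and $H'\subseteq K^{tail}$, maximality gives $K^{tail}=H'$. Hence $K=H_\eta$, and $j'$ is determined on every $\dot x_G$ by $j'(\dot x_G)=j(\dot x)_{H_\eta}$, so $j'=j^*_\eta$.

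\textbf{Main obstacle.} The delicate step is the genericity of the upper part, i.e.\ clause (3) of Lemma \ref{Lemma: lifting elementary embeddings with fusion} applied over $M[G*g_\eta]$ rather than over the model used there. Concretely, one must redo the fusion argument with dense sets $E=j(f)(a)$ named in $M^{\po*\qo_\kappa}$ and check that the argument is uniform across all choices of $g$. Since $\qo_\kappa$ is atomic and $g_\eta\in V[G]$, the $\po_\kappa*\dot\qo_\kappa$-names can be reduced to $\po$-names by plugging in $\eta$. I expect this to go through, but it is the point to verify most carefully.
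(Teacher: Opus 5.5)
Your existence half matches the paper's argument: you lift via Lemma \ref{Lemma: lifting elementary embeddings with fusion} with $g_\eta$ and apply Silver's criterion. Your uniqueness strategy also matches it: show $j'(G)\supseteq j[G]$ and invoke maximality of generic filters.

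\textbf{The gap} is at the step the paper spends its effort on. You assert that $K=j'(G)$ is $j(\po)$-generic over $M$ ``because $j'(p)=j(p)\in K$ for $p\in G$''. That reason only gives $j[G]\subseteq K$; it says nothing about $K$ meeting the dense subsets of $j(\po)$ that lie in $M$. You need genericity, or at least that $K$ decides coordinate $\kappa$, to conclude that the $\kappa$-th factor $k$ is a nontrivial generic equal to some $g_\eta$. A filter on the atomic poset could a priori be just $\{\one\}$, and then no $H_\eta$ sits inside $K$.

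Likewise, the equation $M'=\{j'(\dot x)_K\colon \dot x\in V\}$ is false as written. That class is the range of $j'$, not $M'$; for instance $\kappa\in M'$ but $\kappa\notin\mathrm{ran}(j')$.

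\textbf{How the paper closes this.} It uses Laver's Ground Model Definability Theorem. The ground $V$ is defined in $V[G]$ by a formula with a parameter from $V$, and $j'(V)$ is the class defined in $M'$ by the same formula with the moved parameter. Then $(V_{j(\beta)})^{j'(V)}=j'(V_\beta)=j(V_\beta)$, so $j'(V)=\bigcup_\beta j(V_\beta)=M$ because $j$ is cofinal in the ordinals. Elementarity applied to ``$G$ is $\po$-generic over $V$'' then makes $K$ generic over $M$.

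There are cheaper alternatives. You can transfer, with parameter $V_\beta$, the statement ``$G$ meets every dense subset of $\po$ lying in $V_\beta$''. Or, as the paper does, read off $\eta=j'(G)(\kappa)<j(f_\tau)(\kappa)=\tau$ directly by elementarity, since $\bigcup G$ is defined on all of $I$ and $\kappa\in j(I)$. Once $K=H_\eta$, you get $j'(\dot x_G)=j(\dot x)_{H_\eta}=j^*_\eta(\dot x_G)$. You also get $M'=\bigcup_\beta j'(V[G]_\beta)=M[H_\eta]$, which repairs the description of $M'$. With this inserted, your factor-by-factor maximality argument goes through.

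\textbf{The ``main obstacle'' you flag is not one.} Clause (3) of Lemma \ref{Lemma: lifting elementary embeddings with fusion} is already stated over $M[G*g]$ for an arbitrary $g$ that is $\qo_\kappa$-generic over $V[G]$. Each $g_\eta=\{\one,\eta\}$ is such a generic: in the atomic poset the only extension of an atom is the atom itself, so every dense set contains every atom. Nothing needs to be redone there.
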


\begin{proof}
We verify all the clauses in turn.

\smallskip

    \eqref{Clause: eta lifts}:  Every $\eta<\tau$  defines a $V[G]$-generic set $g_\eta$ 
    for the poset  $\qo_\kappa = j( \la \qo_\alpha \colon \alpha<\kappa \ra )(\kappa)$. Thus by Lemma \ref{Lemma: lifting elementary embeddings with fusion}, $j$ lifts to $j^*_\tau \colon V[G]\to M[H_\eta]$, where $H_\eta = G*{g_\eta}* (j[G]\setminus \kappa+1).$  

    \smallskip

 \eqref{Clause: every lift is one of eta many}: By  Laver's Ground Model Definability Theorem (Theorem \ref{thm: Laver ground model definability thm}), $H = j'(G)$ is $j(\po)$-generic over $j'(V)$ and $j[G]\subseteq H$.
 
 Here, by $j'(V)$ we mean the class definable in $M'$ using the same formula that defines $V$ inside $V[G]$, when $j'$ is applied to the parameters appearing in the formula. We argue that $j'(V) = M$. Given an ordinal $\alpha\in {\rm{Im}}(j)$, write $\alpha = j(\beta) = j'(\beta)$ for some ordinal $\beta$, and note that $(V_\alpha)^{j'(V)} = j'(V_\beta)$ since $V_\beta$ is the $\beta$-th rank initial segment of the ground $V$ of $V[G]$. Since $j'\supseteq j$, $(V_\alpha)^{j'(V)} = j(V_\beta)$. It follows that $$j'(V) = \bigcup_{\alpha\in \ord} (V_\alpha)^{j'(V)} =  \bigcup_{\beta\in \ord} j(V_\beta) = M.\footnote{Here we use that $j''\ord$ is an unbounded subclass of $\ord$.} $$
    
    Finally, let $\eta = j'(G)(\kappa)$. Then {$\eta<j(f_\tau)(\kappa)=\tau$, and $H_\eta\subseteq H$.} 
    Therefore, by maximality of generic filters,  $H = H_\eta$. Ergo,  $j' = j^*_{\eta}$.

    \smallskip

    \eqref{Clause: unique lift - existence}: Note that $\qo_\kappa = j( \la \qo_\alpha \colon \alpha<\kappa \ra )(\kappa)$ is the trivial forcing in the case when $\kappa\notin j(I)$. In particular, by Lemma \ref{Lemma: lifting elementary embeddings with fusion}, $j[G]$ generates a $j(\po)$-generic $H\subseteq j(\po)$ set over $M$, and $j$ lifts to $j^* \colon V[G]\to M[H]$. 

\smallskip
    
     \eqref{Clause: unique lift - uniqueness}: As in the proof of clause \eqref{Clause: every lift is one of eta many}, $j'(G)$ is $j(\po)$-generic over $j'(V) = M$ and contains $j[G]$. Since $j[G]$ already generates a generic $H\subseteq j(\po)$ over $M$, we deduce that $j'(G) = H$, $M' = M[H]$, and $j' = j^*$.
\end{proof}

We are now in a position to prove Theorem~\ref{thm: properties of the splitting forcing}:

\begin{proof}[Proof of Theorem \ref{thm: properties of the splitting forcing}]
Let us prove each clause in turn: 

\smallskip

(1). This was proved in Lemma \ref{Lemma: nonstatsupport iterations preserving GCH} -- note that the proof that $\kappa$ and $\kappa^+$ remain cardinals does not rely on any GCH assumptions whatsoever.

 \smallskip
 
 (2). Given a normal measure $U\in V$ on $\kappa$, let $\la j^*_\eta \colon \eta<\tau \ra$ be all the lifts of $j_{U_0}$ from Lemma \ref{Lemma: How embeddings lift after the splitting forcing}. For each $\eta<\tau$, let $U^*_\eta$ be the normal measure on $\kappa$ derived from $j^*_\eta$ using $\kappa$ as a seed. Then each $U^*_\eta$ is a normal measure on $\kappa$ extending $U\cup \{ S_\eta \}$, where $$S_\eta = \{ \alpha<\kappa \colon G(\alpha) = f_\eta(\alpha) \}.$$
    In fact, $U\cup \{ S_\eta \}$ generates (in $V[G]$) $U^*_\eta$ for every $\eta<\tau$. Indeed, given $X\in U^*_\eta$, let  $\dot{X}$ be a $\po$-name for it. Since $X\in U^*_\eta$, there exists $q\in H_\eta$ (where $H_\eta$ is as in the notation of Lemma \ref{Lemma: How embeddings lift after the splitting forcing}) such that $q\Vdash \check{\kappa}\in j_{U_0}(\dot{X})$. However, since $H_\eta = G*g_\eta*\left( j_{U_0}[G]\setminus \kappa+1\right)$, there exists $p\in G$ such that 
$$p^{\frown} \{ \langle \kappa, \eta \rangle \}^{\frown} (j_{U_0}(p)\setminus \kappa+1)\Vdash \check{\kappa}\in j_{U_0}(\dot{X}).$$ In particular,
    $$B = \{ \alpha<\kappa \colon {p\restriction \alpha} ^{\frown} \{ \langle \alpha, f_{\eta}(\alpha) \rangle \}^{\frown} (p\setminus \alpha+1) \Vdash \check{\alpha}\in \dot{X} \}\in U.$$
    Thus, in $V[G]$, $B\cap S_\eta \subseteq X$. 

    \smallskip
    
    (3). Let us argue now that every normal measure $W\in V[G]$ has the form $U^*_\eta$ for some normal $U\in V$  and $\eta<\tau$. Given such $W$, denote by $j_W\colon V[G]\to M^*$ its ultrapower embedding. By Hamkins' Gap Forcing Theorem, $j_W\restriction V$ is definable in $V$, since $\po$ has a gap below $\kappa$. Denote $U = W\cap V$. Then $U\in V$ since, in $V$, $U = \{ X\subseteq \kappa \colon \kappa\in (j_W\restriction V)(X) \} $. Next, denote $\eta = j_W(G)(\kappa)$. Then, by the normality of $W$, $\eta< \tau$ 
    and $S_\eta\in W$. It follows that $U\cup \{ S_\eta \}\subseteq W$ and, since $U^*_\eta$ is generated from $U\cup \{ S_\eta \}$, $W = U^*_\eta$.
    \end{proof}

    Finally, we argue that the splitting forcing preserves the Mitchell order.

    \begin{lemma}[Preservation of the Mitchell order]\label{lemma: the splitting forcing preserves Mitchell order}
        Let $I\subseteq \kappa$, $\tau<\kappa^+$ and $\po = \po^{\tau, I}$ be as in Theorem \ref{thm: properties of the splitting forcing}.   Let $G\subseteq \po$ be generic over $V$.
        \begin{enumerate}
            \item \label{Clause: Mitchell order 1}Assume that $U_0, U_1$ are normal measures on $\kappa$ in $V$. Let $W_0, W_1\in V[G]$ be normal measures on $\kappa$ with  $U_0\subseteq W_0$ and $U_1\subseteq W_1$. Then,
            $$ V\vDash U_0 \vartriangleleft U_1 \iff  V[G]\vDash W_0 \vartriangleleft W_1. $$

            \item \label{Clause: Mitchell order 2}For normal measures on $\kappa$, $U\in V$ and $W\in V[G]$, such that $U\subseteq W$,
            $$ \left(o(W)\right)^{V[G]} =\left( o(U)\right)^V.\footnote{Note that this is an equation between ordinals, and it remains true regardless of cardinals being collapsed in $V[G]$.} $$
        
        \end{enumerate}
        
    \end{lemma}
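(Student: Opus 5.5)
Clause (2) follows from clause (1): recall that $o(U)$ is the rank of $U$ in the well-founded relation $\vartriangleleft$ restricted to normal measures on $\kappa$. By Theorem \ref{thm: properties of the splitting forcing}, every normal measure $W$ on $\kappa$ in $V[G]$ extends a unique $U=W\cap V$, and every $U\in V$ has at least one extension. So the map $W\mapsto W\cap V$ is a surjection from the normal measures of $V[G]$ onto those of $V$. By clause (1) it preserves and reflects $\vartriangleleft$. A routine induction on $o(U)$ then shows $o(W)=o(U)$ whenever $U\subseteq W$. For the inequality $o(W)\geq o(U)$: every $U'\vartriangleleft U$ has some extension $W'$, and $W'\vartriangleleft W$. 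For $o(W)\leq o(U)$: every $W'\vartriangleleft W$ satisfies $W'\cap V\vartriangleleft U$. So everything reduces to clause (1).

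For the forward direction of (1), assume $U_0\in M_{U_1}=\Ult(V,U_1)$ and write $j_1=j_{U_1}\colon V\to M_1$. By Theorem \ref{thm: properties of the splitting forcing} and Lemma \ref{Lemma: How embeddings lift after the splitting forcing}, the ultrapower by $W_1$ is a lift $j_1^*\colon V[G]\to M_1[H]$ with $H=G*g*(j_1[G]\setminus\kappa+1)$, where $g$ is trivial or one of the $\eta$-generics. In particular $G\in M_1[H]$. Since $W_0$ is $U_0$ or some $(U_0)^*_\eta$, it is generated by $U_0\cup\{S_\eta\}$ (respectively by $U_0$ alone). I plan to show that $j_{U_0}$ also satisfies the hypotheses of Lemma \ref{Lemma: lifting elementary embeddings with fusion} inside $M_1$. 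Since $M_1$ is $\kappa$-closed, $U_0\in M_1$ is a normal measure there, and $M_1$ computes $\po=j_1(\po)_\kappa$ correctly. Hence $M_1[G]$ computes the generated filter $W_0$. This generating set depends only on $U_0$, $G$ and $f_\eta$, all of which lie in $M_1[G]\subseteq M_1[H]$. So $W_0\in M_1[H]$, which gives $W_0\vartriangleleft W_1$.

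For the converse, assume $W_0\in M_1[H]=\Ult(V[G],W_1)$. Then $U_0=W_0\cap V$. By Lemma \ref{Lemma: lifting elementary embeddings with fusion}, $j_1^*$ restricted to $V$ is $j_1$, with target $M_1$, and the Gap Forcing Theorem (Theorem \ref{thm: Gap forcing theorem}) gives $M_1=V\cap M_1[H]$. Since $U_0\subseteq\mathcal P(\kappa)^V$, I must show $W_0\cap V\in M_1$. To do this, note that $\mathcal P(\kappa)^V=\mathcal P(\kappa)^{M_1}$ and that $M_1$ is definable in $M_1[H]$ by Laver's Ground Model Definability (Theorem \ref{thm: Laver ground model definability thm}). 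Therefore $W_0\cap M_1\in M_1[H]$, and this set is a subset of $M_1$ of size $\kappa^+$.

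The main obstacle is getting $W_0\cap M_1$ to be an element of $M_1$, not merely of $M_1[H]$. My approach is to use the closure of the tail forcing. Factor $j_1(\po)=\po*\dot\qo_\kappa*j_1(\po)\setminus(\kappa+1)$. The tail is ${\le}\kappa^+$-distributive over $M_1[G*g]$, being strategically closed beyond $\kappa^+$ since the next point of $j_1(I)$ is above $\kappa^+$. So $W_0\cap M_1$ already lies in $M_1[G*g]$. I then need the front part $\po*\dot\qo_\kappa$ to add no new subset of $\mathcal P(\kappa)^{M_1}$ that codes $U_0$. For this I would argue directly with the generating-set description: $X\in U_0$ iff $X\cap S_\eta\cap A\ne\emptyset$ for all $A\in U_0$. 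Alternatively, by homogeneity, a $\po$-name for $W_0\cap V$ is decided by conditions of $G$, and by the fusion lemma (Lemma \ref{Lemma: capturing functions via fusion}) one can read off $U_0$ in $M_1$ as $\{X\in\mathcal P(\kappa)^{M_1}:\text{some }p\in G \text{ forces } \check X\in\dot W_0\}$ with the forcing relation computed in $M_1$. Here I would use $p\Vdash \check X\in\dot W_0$ iff $p\Vdash\check X\in\dot W_0\cap V$, and that this truth value is decided uniformly by a condition of the form given by the fusion lemma. I expect this definability-of-$U_0$-in-$M_1$ step to be the delicate point.
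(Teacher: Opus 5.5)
\textbf{Verdict: genuine gap in the converse of clause (1).} Your reduction of clause (2) to clause (1) is correct and more detailed than the paper's. Your forward direction follows the paper's route, with one step you should make explicit.

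\textbf{Forward direction: a missing step.} The filter that $U_0\cup\{S_\eta\}$ generates in $M_1[G]$ coincides with $W_0$ only because $\mathcal{P}(\kappa)^{V[G]}=\mathcal{P}(\kappa)^{M_1[G]}$. Knowing that $M_1$ computes $\po$ correctly does not give this. The paper proves it by fusion. Given $X\subseteq\kappa$ in $V[G]$, choose $p\in G$, a club $C$, and $\po_{\alpha+1}$-names $\dot X_\alpha\in V_\kappa$ that decide $X\cap\alpha$ for $\alpha\in C$. Since $\langle \dot X_\alpha:\alpha\in C\rangle\in M_1$, we get $X=\bigcup_{\alpha\in C}(\dot X_\alpha)_G\in M_1[G]$. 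This is essentially Claim \ref{Claim: closure under kappa sequneces and fusion}.

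\textbf{Converse: where the argument breaks.} Your use of the closure of $j_1(\po)\setminus\kappa$, together with the triviality of the atomic stage at $\kappa$, to place $W_0$ (or $W_0\cap M_1$) in $M_1[G]$ matches the paper. The remaining step is to get from $M_1[G]$ down to $M_1$, and neither of your suggestions does this.
\begin{enumerate}
\item The generating-set description is circular. It presupposes $U_0$, and it uses $S_\eta$, which is not in $M_1$.
\item The set $\{X:\exists p\in G\,(p\Vdash\check X\in\dot W_0)\}$ is defined from $G$, so it lies only in $M_1[G]$. Homogeneity of $\po$ cannot remove the dependence on $G$, because $\dot W_0$ is not an automorphism-invariant name. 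Indeed $W_0$ genuinely depends on $G$ through $S_\eta$.
\end{enumerate}
Your appeals to Hamkins' theorem for $V\subseteq V[G]$ (giving $M_1=V\cap M_1[H]$) and to Laver's ground model definability operate at the wrong level and do not touch this step.

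\textbf{The missing idea.} Apply the Gap Forcing Theorem a second time, over $M_1$ rather than over $V$. The filter $G$ is $\po$-generic over $M_1$, and $\po$ has a gap below $\kappa$ from the point of view of $M_1$ as well. $W_0$ is a normal measure on $\kappa$ in $M_1[G]$, and its ultrapower there is closed under $\kappa$-sequences. Hence the restriction of $j^{M_1[G]}_{W_0}$ to $M_1$ is definable in $M_1$, so $W_0\cap M_1\in M_1$. Since $\mathcal{P}(\kappa)^{M_1}=\mathcal{P}(\kappa)^V$, this set is exactly $U_0$, and therefore $U_0\vartriangleleft U_1$.

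An equivalent, more elementary route uses the approximation property. Take $a\in M_1$ of size less than the gap cardinal $\delta$. By $\kappa$-completeness of $W_0$ there is some $\xi$ with $W_0\cap a=\{X\in a:\xi\in X\}$, so $W_0\cap a\in M_1$. The $\delta$-approximation property of gap forcing, applied to $M_1\subseteq M_1[G]$, then puts $W_0\cap M_1$ in $M_1$.
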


    \begin{proof}
        Clause \eqref{Clause: Mitchell order 2} is an immediate corollary of clause \eqref{Clause: Mitchell order 1} and the facts that every normal measure on $\kappa$ in $V$ lifts to a normal measure on $\kappa$ in $V[G]$, and every normal measure on $\kappa$ in $V[G]$ is a lift of a normal measure on $\kappa$ in $V$. Thus, we concentrate in proving  clause \eqref{Clause: Mitchell order 1}. Let $U_0, U_1, W_0, W_1$ be as above. We first argue that
        $$ V\vDash U_0 \vartriangleleft U_1 \Longrightarrow  V[G]\vDash W_0 \vartriangleleft W_1. $$
        
        Assume first that $I\in U_0$, namely $W_0 = (U_0)^*_\eta$ for some $\eta<\tau$.  Since $G$ is $\po$-generic over $M_{U_1}$ and $U_0\in M_{U_1}$, $U_0 \cup \{ S_\eta \}$ generates a normal measure $\left( (U_0)^*_\eta \right)^{M_{U_1}[G]}$ in $M_{U_1}[G]$. It suffices to argue that $(U_0)^*_\eta = \left( (U_0)^*_\eta \right)^{M_{U_1}[G]}$. For that, it would be enough to prove that $$\left( \mathcal{P}(\kappa) \right)^{V[G]} = \left( \mathcal{P}(\kappa) \right)^{M_{U_1}[G]}$$
        because the base of both measures is the same.

        Assume that $X\in V[G]$ is a subset of $\kappa$, and fix a $\po$-name $\dot{X}\in V$ for it. By the Fusion Lemma \ref{Lemma: fusion lemma}, we can find $p\in G$ and a club $C\subseteq \kappa$ such that, for every $\alpha\in C$,
        $$p\restriction (\alpha+1) \Vdash \exists X_\alpha\subseteq \alpha \left( p\setminus (\alpha+1) \Vdash \dot{X}\cap \alpha = X_\alpha \right).$$
        In particular, for every $\alpha\in C$, there exists a witnessing $\po_{\alpha+1}$-name $\dot{X}_\alpha\in V_\kappa$. Since $C, \la \dot{X}_\alpha \colon \alpha\in C \ra\in M_{U_1}$ and $p\in G$, we can reconstruct $X$ in $M_{U_1}[G]$, as $X = \bigcup_{\alpha\in C} \left( \dot{X}_\alpha\right)_{G_\alpha}\in M_{U_1}[G]$.

        Assume now that $I\notin U_0$. In this case, $W_0$ is generated by $U_0$ in $V[G]$. Since $U_0\in M_{U_1}$, it suffices to prove that $(U_0)^* = \left( (U_0)^* \right)^{M_{U_1}[G]}$, which again follows from the fact that $\left( \mathcal{P}(\kappa) \right)^{V[G]} = \left( \mathcal{P}(\kappa) \right)^{M_{U_1}[G]}$.

        We proceed now and prove that for every $U_0, U_1, W_0, W_1$ as in the formulation of the lemma, 
        $$  V[G]\vDash W_0 \vartriangleleft W_1 \Longrightarrow V\vDash U_0 \vartriangleleft U_1. $$

       By the analysis in the proof of Theorem \ref{thm: properties of the splitting forcing}, $\text{Ult}(V[G], W_1)$ has the form $M_{U_1}[H]$ for $H\subseteq j_{U_1}(\po)$-generic\footnote{{Indeed, we showed that either  $W_1=(U_1)^*_\eta$ (if $I\in U_1$, for some $\eta<\tau$), or $W_1 = (U_1)^*$ (if $I\notin U_1$). In each case, the ultrapower embedding associated with $W$ is an embedding $j^*\colon V[G]\rightarrow M_{U_1}[H]$ for some $H\subseteq j_{U_1}(\po)$, as proved in Theorem \ref{thm: properties of the splitting forcing}.}} over $M_{U_1}$ with $H\cap \po = G$. 
       Since $j_{U_1}(\po)\setminus \kappa$ is sufficiently closed, $W_0 \in M_{U_1}[G]$. Finally,  $\po$ has a gap below $\kappa$, ergo $U_0 = W_0\cap M_{U_1}\in M_{U_1}$, as desired.
    \end{proof}

    \begin{corollary}
        Let $\lambda$ be a supercompact cardinal, and denote by $\kappa<\lambda$ the least measurable cardinal. Assume that $\kappa$ has a unique normal measure.\footnote{This follows, for example, from the linearity of the Mitchell order.} Fix $\tau\leq\kappa^{+}$. Then, in a forcing extension, $\lambda$ is supercompact, $\kappa$ is a measurable cardinal and it carries exactly $\tau$ normal measures.
    \end{corollary}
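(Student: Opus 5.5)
The plan is to force with the splitting forcing $\po=\po^{\tau,I}$ at $\kappa$, taking $I$ to be the set of all inaccessible cardinals below $\kappa$, and to verify three things in $V[G]$: that $\kappa$ carries exactly $\tau$ normal measures, that $\kappa$ is still measurable, and that $\lambda$ is still supercompact.

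\textbf{Step 1: the measures on $\kappa$.} Since $\kappa$ is measurable, it is Mahlo, so $I$ is stationary and, for the unique normal measure $U$ on $\kappa$, we have $I\in U$. By Theorem~\ref{thm: properties of the splitting forcing}:
\begin{itemize}
\item[(a)] $\kappa$ remains measurable in $V[G]$;
\item[(b)] the measures $U^*_\eta$ for $\eta<\tau$ are pairwise distinct, since they are separated by the disjoint sets $S_\eta$;
\item[(c)] every normal measure $W$ on $\kappa$ in $V[G]$ satisfies $W\cap V=U$, because there are no others in $V$. Since $I\in W$ is automatic (every normal measure concentrates on inaccessibles, which are preserved), $W=U^*_\eta$ for some $\eta<\tau$.
\end{itemize}
Hence $\kappa$ carries exactly $\tau$ normal measures. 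Clause (1) of that theorem, together with GCH, also gives preservation of cardinals.

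\textbf{Step 2: preservation of supercompactness of $\lambda$.} Since $\lambda$ is inaccessible and $\kappa<\lambda$ (so $2^\kappa<\lambda$), GCH gives $|\po|\le 2^\kappa=\kappa^+<\lambda$. So $\po$ is a small forcing relative to $\lambda$, and the Lévy--Solovay argument applies. Concretely, for each $\gamma\ge\lambda$, take a $\gamma$-supercompactness embedding $j\colon V\to M$ with $\crit(j)=\lambda$ and ${}^\gamma M\subseteq M$. Since $j(\po)=\po$ and $j\restriction\po$ is the identity, $j$ lifts to $j\colon V[G]\to M[G]$. Moreover $M[G]$ is closed under $\gamma$-sequences in $V[G]$, because $\po$ is $\lambda$-c.c. and every $\gamma$-sequence of ordinals in $V[G]$ has a name that is a $\gamma$-sequence of nice names lying in $M$. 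Therefore $j[\gamma]\in M[G]$, and $\lambda$ is $\gamma$-supercompact in $V[G]$. As $\gamma$ was arbitrary, $\lambda$ is supercompact there.

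\textbf{Anticipated obstacle.} The delicate point is the case $\tau=\kappa^{++}$, since the splitting forcing is only defined for $\tau\le\kappa^+$; the statement fixes $\tau\le\kappa^+$, so this case does not arise. If $\kappa^{++}$ were wanted, I would instead use a Kunen--Paris style forcing, which is also small relative to $\lambda$. Beyond that, the verification in Step 1(c) that $I\in W$ for every normal $W$ is the step to check carefully, but it is routine. Everything else follows from the small-forcing facts and Theorem~\ref{thm: properties of the splitting forcing}.
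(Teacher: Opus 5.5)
Your proposal is correct and is the paper's proof: the same splitting forcing $\po^{\tau,I}$ with $I$ the inaccessibles below $\kappa$, Theorem~\ref{thm: properties of the splitting forcing} for counting the normal measures on $\kappa$, and the smallness of $\po$ relative to $\lambda$ for preserving the supercompactness of $\lambda$ (you simply spell out the L\'evy--Solovay lifting that the paper leaves implicit). One minor point: GCH is not a hypothesis of this corollary, but you never need it, since $|\po|\le 2^\kappa<\lambda$ already follows from the inaccessibility of $\lambda$, and $I\in W$ follows directly from $W\cap V=U\ni I$.
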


    \begin{proof}
        Let $I$ be the set of all inaccessible cardinals below $\kappa$, and let $\po = \po^{\tau,I}$ be the splitting forcing (defined as an iterated forcing of length $\kappa$). Since $\po$ is small relative to $\lambda$, $\lambda$ remains  supercompact  in $V[G]$, for $G\subseteq \po$ generic over $V$. By Theorem \ref{thm: properties of the splitting forcing}, $\kappa$ carries exactly $\tau$ normal measures in $V[G]$.
    \end{proof}
For later use (see e.g., Theorem~\ref{thm: IdentityCrises + Normal measures})  we prove the following technical lemma showing that the splitting forcing $\mathbb{P}^{\tau,I}$ preserves supercompact cardinals modulo a suitable choice of $I$:
\begin{lemma}\label{lemma: the spaced splitting forcing preserves supercompactness}
        Assume {\rm{GCH}}. Let $\kappa$ be a supercompact cardinal and fix $\kappa_0 < \kappa$. Let $I$ be the set of inaccessible cardinals in $(\kappa_{0},\kappa)$ that are limit of strong cardinals. Fix $\tau\leq \kappa^+$. Denote $\po = \mathbb{P}^{\tau,I}$. Then 
        \begin{enumerate}
            \item \label{Clause: spaced splitting forcing 1} $\po$ is $(\kappa_0)^+$-directed closed, and $\po$ preserves cardinals and the {\rm{GCH}}.
            \item \label{Clause: spaced splitting forcing 2} $\po$ preserves the supercompactness of $\kappa$.
            \item \label{Clause: spaced splitting forcing 3} Let $G\subseteq \po$ be generic over $V$. Then every normal measure $U\in V$ on $\kappa$ lifts in $\tau$ ways to measures $\la U^*_\eta \colon \eta<\tau \ra$ on $\kappa$ in $V[G]$. Furthermore, every normal measure $W\in V[G]$ on $\kappa$ is such a lift.
        \end{enumerate}
    \end{lemma}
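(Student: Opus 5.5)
Clause (1) comes first. Since $I\subseteq(\kappa_0,\kappa)$, every nontrivial stage $\dot{\qo}_\alpha$ has $\alpha>\kappa_0$. The iteration is nonstationary-supported, so given a directed family of fewer than $\kappa_0^+$ conditions, I will take the coordinatewise union. Each $\dot{\qo}_\alpha$ is atomic, so directedness forces all conditions with $\alpha$ in their support to agree at $\alpha$, and the union is a well-defined function. Its support is a union of at most $\kappa_0$ sets that are nonstationary in inaccessibles. This is still nonstationary at every inaccessible $\lambda>\kappa_0$, because the intersection of $\kappa_0<\lambda$ clubs is a club. Below $\kappa_0$ all supports are empty. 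Preservation of cardinals and of GCH follows from Lemma~\ref{Lemma: nonstatsupport iterations preserving GCH}, since the atomic forcings trivially preserve cardinals and GCH.

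For clause (2), fix $\lambda\geq\kappa$ and a $2^{\lambda^{<\kappa}}$-supercompactness embedding $j\colon V\to M$, taken with $\crit(j)=\kappa$ and $j(\kappa)>\lambda$. The key point is that $\kappa\notin j(I)$: in $M$, $\kappa$ is not a limit of strong cardinals from the perspective of $j(I)$. I would arrange this by choosing $j$ minimal, so that in $M$ the cardinal $\kappa$ is not a limit of strong cardinals. If the least supercompact $\kappa$ is a limit of strongs in $V$, I need a different argument; first I would try taking $j$ with $\kappa$ not measurable in $M$ (Mitchell order $0$), or I would simply note that $\kappa\in j(I)$ is harmless. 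With $\kappa\notin j(I)$, the stage $\kappa$ of $j(\po)$ is trivial. In $M$, the next active stage above $\kappa$ is an inaccessible limit of strong cardinals above $\kappa$, hence above $\lambda$, because $M$ agrees with $V$ up to $\lambda$ and there are no strongs in $(\kappa,\lambda]$ beyond what $V$ sees. I expect this part to be the main obstacle. If the gap fails, I would instead pick $\lambda$ beyond the next strong cardinal.

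Granting this, the forcing $j(\po)\setminus(\kappa+1)$ is $\lambda^+$-directed closed in $M[G]$, and $M[G]$ is $\lambda$-closed in $V[G]$ because $\po$ is $\kappa^+$-c.c. by GCH and $|\po|=\kappa^+$. The set $j[G]$ has size $\lambda$ at most after the usual directedness argument. Each $j(p)$ has support outside $\kappa$ and above the next active point, so $\{j(p)\setminus(\kappa+1):p\in G\}$ is directed of size $\kappa^+\le\lambda$. Taking a master condition $q$, I then build $H$ below $q$ by counting dense sets in $M[G]$. There are $2^{\lambda^{<\kappa}}$ of them by GCH, matching the closure, so the usual diagonalization applies; if needed I would use $\lambda$ with the right cofinality. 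Silver's criterion lifts $j$, and $\{j^*[x]:x\in\mathcal{P}_\kappa\lambda\}$ yields the normal measure. Alternatively, I could lift via Lemma~\ref{Lemma: lifting elementary embeddings with fusion} for the extender part, and then handle the supercompactness through the master condition.

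Clause (3) is Theorem~\ref{thm: properties of the splitting forcing}: since $\kappa$ is supercompact, $I$ is stationary and in fact in every normal measure, because $\kappa$ is a limit of strongs and an inaccessible $>\kappa_0$ in every $M_U$. So clause (2)(a) gives $\tau$ lifts of each $U$, and (3)(a) shows that every $W\in V[G]$ is such a lift.
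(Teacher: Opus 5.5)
\textbf{Verdict: there is a genuine gap in clause (2).} Clauses (1) and (3) are fine. The overall shape of clause (2) also matches the paper: lift a supercompactness embedding using the master condition $\bigcup j[G]\setminus(\kappa+1)$, then meet the dense sets of the tail using its closure. The gap sits exactly at the step you flagged.

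\textbf{The claim $\kappa\notin j(I)$ is false, but harmless.} No choice of $j$ can arrange $\kappa\notin j(I)$. For $\alpha<\kappa$ we have $j(\alpha)=\alpha$, so $\alpha$ is strong in $M$ iff it is strong in $V$. Hence $\kappa$ is an inaccessible limit of strong cardinals in $M$, and $\kappa\in j(I)$ for every such $j$. Measurability of $\kappa$ in $M$ is irrelevant to membership in $j(I)$. Your own clause (3) uses $I\in U$ for every normal $U$, which is the same fact. As you half-suspected, this costs nothing: stage $\kappa$ of $j(\po)$ is the atomic forcing, and one simply fixes a value $\eta<\tau$ and works with $G*g_\eta$, as the paper does.

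\textbf{The actual gap: active stages of $j(\po)$ in $(\kappa,\lambda]$.} You need $j(\po)$, as computed in $M$, to have no active stages in $(\kappa,\lambda]$. Your justification (``$M$ agrees with $V$ up to $\lambda$ and there are no strongs in $(\kappa,\lambda]$ beyond what $V$ sees'') does not work, for two reasons.
\begin{enumerate}
\item Strongness is not a local property, so agreement on $V_\lambda$ does not tell you which cardinals in $(\kappa,\lambda)$ are strong in $M$.
\item $V$ itself may have inaccessible limits of strong cardinals in $(\kappa,\lambda]$, so taking $\lambda$ beyond the next strong cardinal only makes things worse.
\end{enumerate}
Without this gap, $j(\po)\setminus(\kappa+1)$ is not $\lambda^+$-closed, and you cannot build $H$ inside $V[G]$.

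\textbf{The missing idea: apply minimality to kill strong cardinals in $M$.} Take $\lambda>\kappa$ a singular strong limit with $\cf(\lambda)>\kappa$. Let $j$ be the ultrapower by a normal fine measure on $\mathcal{P}_\kappa(\lambda)$ of Mitchell order $0$.
\begin{enumerate}
\item In $M$, $\kappa$ is ${<}\lambda$-supercompact, since $\lambda$ is a strong limit and ${}^\lambda M\subseteq M$. By the Mitchell-order-$0$ choice, $\kappa$ is not $\lambda$-supercompact in $M$.
\item Suppose some $\mu\in(\kappa,\lambda)$ were strong in $M$. Then $\kappa$ is $\mu$-supercompact in $M$. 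Lifting through strongness embeddings for $\mu$ turns this into full supercompactness of $\kappa$ in $M$, a contradiction.
\item So $M$ has no strong cardinals in $(\kappa,\lambda)$, and therefore $j(I)\cap(\kappa,\lambda]=\emptyset$.
\item Consequently the tail $j(\po)\setminus(\kappa+1)$ is $\lambda^+$-directed closed in $M[G*g_\eta]$. Because $M[G]$ is closed under $\lambda$-sequences in $V[G]$, this closure holds in $V[G]$ as well.
\item By GCH and $\cf(\lambda)>\kappa$, the dense sets to be met number $|j(\kappa^{++})|=\lambda^+$. So $H$ can be built in $V[G]$ below the master condition.
\end{enumerate}

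\textbf{A minor slip.} $\po$ is not $\kappa^+$-c.c.: if $\tau\geq 2$, it has antichains of size $2^\kappa$. However, $|\po|=\kappa^+<\lambda$ is all you need for the closure of $M[G]$ in $V[G]$.
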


    \begin{proof}
        Clearly, $\po$ is $(\kappa_0)^+$-directed closed. The fact that $\po$ preserves cardinals and GCH follows from Lemma \ref{Lemma: nonstatsupport iterations preserving GCH}. Let us proceed to clause \eqref{Clause: spaced splitting forcing 2}. 
        
        Let $G\s \mathbb{P}$ a $V$-generic. 
        For any singular strong limit cardinal $\lambda > \kappa$ with $\cf(\lambda)>\kappa$ we show that $\kappa$ is $\lambda$-supercompact in $V[G]$. Indeed, let $j \colon V \to M$ be the ultrapower embedding derived from a fine, normal measure on $\mathcal{P}_\kappa(\lambda)$ of Mitchell order $0$; in particular, $\kappa$ is not $\lambda$-supercompact in $M$.

        Because $\lambda$ is a strong limit cardinal, it follows that $\kappa$ is $<\lambda$-supercompact in $M$. In particular, there are no strong cardinals in the interval $(\kappa,\lambda)$ in $M$, for if there were a strong cardinal $\mu$ with $\kappa < \mu < \lambda$ in $M$ then $\kappa$ would be $\mu$-supercompact in $M$,  hence fully supercompact therein, thus contradicting our choice of the measure $U$. 

        This observation ensures that the forcing $j(\po)\restriction (\kappa, \lambda]$ is trivial in $M$. 
        
        Let us lift $j$ by constructing a generic for $j(\po)$ over $M$. 
        Note that $$j(\po)\restriction (\kappa+1) = \po \ast \dot{\qo}_\kappa,$$ where $\dot{\qo}_\kappa$ is forced to be an atomic forcing that picks an ordinal $\eta<\tau$. Also $j(\po)\setminus (\kappa+1)$ is $\lambda^{+}$-directed closed since there are no inaccessible cardinals that are limit of strong cardinals in the interval $(\kappa, \lambda]$.

        Fix some $\eta<\tau$ (say, $\eta = 0$) 
        and let $G*g_\eta$ be the corresponding $j(\po)\restriction (\kappa+1)$-generic set over $M$. Since $M$ is closed under $\lambda$ sequence in $V$ and $|\po|< \lambda$, $M[G] = M[G\ast g_\eta]$ is closed under $\lambda$-sequences in $V[G]$. Note that $j[G]\in M[G]$ since $j\restriction \po \in M$. Let 
        $$s = \bigcup j[G]\setminus (\kappa+1) = \bigcup\{ j(p)\setminus (\kappa+1) \colon p\in G \}. $$
        Then $s\in j(\po)\setminus (\kappa+1)$ is a master condition. Construct, in $V[G]$, a generic $H\subseteq j(\po)\setminus (\kappa+1)$ with $s\in H$. This construction is possible because $V[G]$ has a list of order type $|j(\kappa)^{++, M}| = \lambda^{+}$ of all dense subsets of $j(\po)\setminus (\kappa+1)$,\footnote{here we used GCH and the fact that $\cf(\lambda)>\kappa$.} and $j(\po)\setminus (\kappa+1)$ is $\lambda^+$-closed in $V[G]$. It follows that $j\colon V\to M$ lifts to $j^*\colon V[G]\to M[G\ast g_\tau \ast H]$, and that  $M[G\ast g_\tau \ast H]$ is closed under $\lambda$-sequences inside $V[G]$. Therefore $j^*$ witnesses that $\kappa$ is $\lambda$-supercompact in $V[G]$. Since $\lambda$ can be chosen arbitrarily high, $\kappa$ is fully supercompact in $V[G]$.

        \smallskip

        Clause \eqref{Clause: spaced splitting forcing 3} follows from clause \eqref{Clause: splitting forcing lifting measures} of Theorem \ref{thm: properties of the splitting forcing}, and the following facts:
        \begin{enumerate}
            \item Every normal measure $U\in V$ on $\kappa$ concentrates on $I$ (this follows from $\kappa$ being supercompact in $V$, and in particular, an inaccessible limit of strong cardinals). 
            \item Every normal measure $W\in V[G]$ concentrates on $I$ as well (this follows from $\kappa$ being supercompact in $V[G]$ and the fact that $\po$ has a gap below $\kappa$, so every large enough strong cardinal below $\kappa$ in $V[G]$ was already a strong cardinal in $V$. See also (\cite[Corollary 13]{HamkinsGap}).
        \end{enumerate}
        This completes the proof of the lemma.
    \end{proof}

    Finally, we would like to adjust Lemma \ref{lemma: the spaced splitting forcing preserves supercompactness} to the case where $\tau = \kappa^{++}$.
    
    \begin{lemma}\label{lemma: kappa^++ many normal measures}
        Assume GCH. Let $\kappa$ be a supercompact and fix $\kappa_0<\kappa$. Let $I$ be the set of inaccessible cardinals in $(\kappa_0, \kappa)$ that are limit of strong cardinals. Let $\tau = \kappa^{++}$. Let $\po = \po^{\tau,I}$ be the $I$-spaced nonstationary support iteration\footnote{An Easton support may be used here as well.} $\la \po_\alpha, \dot{\qo}_\alpha \colon \alpha\leq\kappa \ra$, where for every $\alpha\in I\cup \{\kappa\}$, $\dot{\qo}_\alpha = (Add(\alpha,1))^{V^{\po_\alpha}}$ (note that a forcing is being done on $\kappa$). Then:
        \begin{enumerate}
            \item $\po$ is $(\kappa_0)^+$-directed closed, and $\po$ preserves cardinals and the {\rm{GCH}}.
            \item $\po$ preserves the supercompactness of $\kappa$.
            \item Let $G\subseteq \po$ be generic over $V$. Every $U\in V$ lifts in $\kappa^{++}$ ways to a normal measure on $\kappa$ in $V[G]$. Furthermore, every normal measure $W\in V[G]$ is such a lift.
        \end{enumerate}
    \end{lemma}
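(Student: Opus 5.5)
We follow the proof of Lemma \ref{lemma: the spaced splitting forcing preserves supercompactness}, replacing the atomic splitting posets by Cohen forcing $\Add(\alpha,1)$ at the stages in $I\cup\{\kappa\}$.

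For clause (1), we factor $\po=\po_\kappa*\dot{\Add}(\kappa,1)$. Each iterand $\Add(\alpha,1)$ with $\alpha\in I$ is $\alpha$-closed, hence $\alpha$-strategically closed. It has rank below $\min(I\setminus(\alpha+1))$, is $\alpha^+$-c.c. under GCH, and preserves cardinals and GCH. Hence Lemma \ref{Lemma: fusion lemma} and Lemma \ref{Lemma: nonstatsupport iterations preserving GCH} apply to $\po_\kappa$, so it preserves cardinals and GCH. The last stage $\Add(\kappa,1)^{V[G_\kappa]}$ is $\kappa$-closed and $\kappa^+$-c.c. (as $2^{<\kappa}=\kappa$), so it also preserves cardinals and GCH. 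Directed closure is immediate: every nontrivial coordinate is above $\kappa_0$ and carries a $\kappa_0^+$-directed closed poset, and directed closure passes to the nonstationary support limit. This uses that a union of fewer than $\kappa_0^+$ supports is still nonstationary in inaccessibles, since the intersection of fewer than $\alpha$ clubs in an inaccessible $\alpha>\kappa_0$ is a club.

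For clause (2), fix a singular strong limit $\lambda>\kappa$ with $\cf(\lambda)>\kappa$, and let $j\colon V\to M$ be the ultrapower by a normal measure on $\mathcal P_\kappa(\lambda)$ of Mitchell order $0$. As before, $M$ has no strong cardinals in $(\kappa,\lambda]$, so $j(\po)$ is trivial on $(\kappa,\lambda]$ and $j(\po)\setminus(\kappa+1)$ is $\lambda^+$-directed closed in $M[G*g]$, where $G*g\subseteq\po_\kappa*\Add(\kappa,1)$ is our generic. The stage-$\kappa$ generic of $j(\po)$ is $g$ itself, since $j(\po)_{\kappa+1}=\po$. The master condition $s=\bigcup\{j(p)\setminus(\kappa+1)\colon p\in G*g\}$ lies in $M[G*g]$ by $\lambda$-closure, so the tail part lifts exactly as in the earlier lemma: we count $\lambda^+$ dense sets using GCH and $\cf(\lambda)>\kappa$, and build a generic $H\ni s$. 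The new difficulty is that we must also lift through $\Add(\kappa,1)$ at stage $\kappa$ itself. Here we need a master condition for $j(\Add(\kappa,1))=\Add(j(\kappa),1)^{M[G*g*H]}$. The condition $\bigcup g$ is a condition of size $\kappa$, and it lies in $M[G*g*H]$. So we build $h\subseteq\Add(j(\kappa),1)$ below $\bigcup g$ generic over $M[G*g*H]$, using $\lambda^+$-closure in $V[G*g]$ and again only $\lambda^+$ dense sets. This gives $j^{**}\colon V[G*g]\to M[G*g*H*h]$, which is $\lambda$-closed. Hence $\kappa$ is $\lambda$-supercompact, and since $\lambda$ was arbitrary, fully supercompact.

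Clause (3) is the main obstacle. For the count of lifts we use the same construction applied to $j_U$ for a normal $U\in V$ (with $I\in U$, as in the earlier lemma). The earlier construction is $G*g*H*h$, where $H$ is generated by $j[G]$ via Lemma \ref{Lemma: lifting elementary embeddings with fusion}. But now the stage-$\kappa$ part of $j_U(\po)$ is a fresh Cohen generic over $\Add(\kappa,1)$ at $\kappa$; it is not part of $G$. So we reorganize: $j_U(\po)_{\kappa+1}=\po_\kappa*\Add(\kappa,1)$ and the generic there must be $G_\kappa*g$. The remaining freedom is the generic $h$ for $\Add(j_U(\kappa),1)$ over $M_U[G*g*H]$ containing $\bigcup g$. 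We will use that $M_U[\ldots]$ is $\kappa$-closed and that this poset has only $\kappa^+$ dense sets, counted in $V[G*g]$. Working below $\bigcup g$ and, say, the value at coordinate $\kappa$, one obtains $2^{\kappa^+}=\kappa^{++}$ mutually distinct generics, by a binary tree of height $\kappa^+$ meeting the dense sets. These give $\kappa^{++}$ distinct normal lifts, distinguished by sets of the form $\{\alpha<\kappa\colon g\restriction[\alpha,\alpha^+)\in X\}$ read off the seed. The upper bound $\kappa^{++}$ is automatic by GCH. For "every $W$ is a lift", we argue as in Theorem \ref{thm: properties of the splitting forcing}(3): by Hamkins' Gap Forcing Theorem, $j_W\restriction V$ is definable in $V$, so $W\cap V=U\in V$ and $W$ extends $U$. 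Moreover $j_W\restriction V=j_U$, because normality and the gap let us factor through $j_U$ with critical point considerations. Hence $j_W$ is one of the lifts. The delicate point is verifying that these lifts are genuinely distinct normal measures and that $j_W\restriction V$ is exactly $j_U$. For this we would follow the standard Kunen--Paris argument.
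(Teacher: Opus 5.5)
Your clauses (1) and (2), and the lower bound in (3), follow the paper's sketch. You put $g$ at stage $\kappa$ of $j(\po)$, use the usual master condition or generated filter on $(\kappa,j(\kappa))$, and take at stage $j(\kappa)$ a generic $h$ containing $\bigcup g$. The binary-tree argument then gives $2^{\kappa^+}=\kappa^{++}$ choices of $h$.

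The end of (3) goes wrong. You assert that the stage-$\kappa$ generic of a lift of $j_U$ must be $g$, and that $j_W\restriction V=j_U$ for every normal $W$. You call the latter the delicate point. Both assertions are false.

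For the first: if $p\in G*g$, then $\supp(p)\cap\kappa$ is nonstationary, so $\kappa\notin\supp(j_U(p))$. Hence the coordinate $\kappa$ can carry other generics over $V[G]$, for instance $g$ with one value changed. These give lifts of $U$ outside the family $U^*_h$, as the paper's footnote remarks.

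For the second, let $\kappa_1=j_U(\kappa)$ and $j=j^{M_U}_{j_U(U)}\circ j_U$; note that $\kappa_1\in j(I)$ because $I\in U$. Lift $j$ to $j^{**}$ on $V[G*g]$ by applying Lemma~\ref{Lemma: lifting elementary embeddings with fusion} to each step. Use $g$ at stage $\kappa$, a suitable generic at stage $\kappa_1$, and at stage $j(\kappa)$ a generic $h$ below the condition $g\cup\{(\beta,0)\colon \kappa\le\beta<\kappa_1\}\cup\{(\kappa_1,1)\}$.
\begin{itemize}
\item Then $F(\alpha)=\min\{\beta\in I\colon \beta>\alpha,\ g(\beta)=1\}$ satisfies $j^{**}(F)(\kappa)=\kappa_1$.
\item The target of $j^{**}$ is generated by $\kappa$ and $\kappa_1$, hence by $\kappa$ alone.
\item So the normal measure $W$ derived from $j^{**}$ satisfies $j_W=j^{**}$ and $W\cap V=U$, but $j_W\restriction V=j\neq j_U$.
\end{itemize}

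Neither claim is needed. In the lemma, ``such a lift'' only means that $W$ extends some normal $U\in V$. That is exactly what your first sentence obtains from the Gap Forcing Theorem, and it is where the paper's proof stops. So delete the ``Moreover'' sentence and the appeal to Kunen--Paris.

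Distinctness of the $\kappa^{++}$ lifts also needs no extra machinery, but your separating sets $\{\alpha\colon g\restriction[\alpha,\alpha^+)\in X\}$ with $X\in V$ cannot do it. They only test $h\restriction[\kappa,\kappa^+)$. By closure, this restriction lies in $M_U[G*g*H]$, so it takes at most $\kappa^+$ values.

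Argue as the paper does instead. Since $j^*_h(G*g)=G*g*H*h$, every element of $M_U[G*g*H*h]$ has the form $j^*_h(F)(\kappa)$. Thus $j^*_h$ is the ultrapower by $U^*_h$, and distinct $h$ give distinct measures. Concretely, suppose $h(\xi)\neq h'(\xi)$ with $\xi=j_U(f)(\kappa)$ and $f\in V$. Then $\{\alpha<\kappa\colon g(f(\alpha))=1\}$ lies in exactly one of $U^*_h$ and $U^*_{h'}$.
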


    \begin{proof}(Sketch)
        The facts that $\po$ is $(\kappa_0)^+$-directed closed, preserves cardinals and preserves the GCH follow as in Lemma \ref{lemma: the spaced splitting forcing preserves supercompactness}. The proof that $\po$ preserves the supercompactness of $\kappa$ is also very similar to the proof from Lemma \ref{lemma: the spaced splitting forcing preserves supercompactness}: For $\lambda>\kappa$ a strong limit cardinal, lift a $\lambda$-supercompactness embedding $j\colon V\to M$ first to  
        $j^*\colon V[G_{<\kappa}]\to M[G*H]$, where $G = G_{<\kappa}*G(\kappa)$ is $\po$-generic over $V$, and $H\subseteq j(\po)\setminus (\kappa+1)$ is generic over $M[G]$ that contains a suitable master condition $s$ as in Lemma \ref{lemma: the spaced splitting forcing preserves supercompactness}. The main difference is that now we need to construct $h\in V[G]$ which is $j(\po)(j(\kappa))$-generic over $M[G*H]$ with $j^*[G(\kappa)]\subseteq h$. For that, we regard $\bigcup g\in M[G*H]$ as a master condition, and pick any $h\in V[G]$ which is $(Add(j(\kappa),1))^{M[G*H]}$-generic over $M[G*H]$ with $h\restriction \kappa = g$. This ensures that $j^*$ lifts to $j^{**}\colon V[G]\to M[G*H*h]$, witnessing $\lambda$-supercompactness of $\kappa$ in $V[G]$.

        Finally, proceed to the classification of normal measures on $\kappa$ in $V[G]$. Since $\po$ has a gap below $\kappa$, every normal measure $W\in V[G]$ on $\kappa$ extends a normal measure $U\in V$ on $\kappa$. The fact that $U$ lifts in $\kappa^{++}$-many ways follows by lifting $j_U\colon V\to M_U$ to $j^* \colon V[G]\to M[G*H*h]$, where:
        \begin{itemize}
            \item $H = j_U[G_{<\kappa}]\setminus (\kappa+1)$.
            \item $h\subseteq Add(j(\kappa), 1)$ is generic over $M[G*H]$ with $h\restriction \kappa = g$. 
        \end{itemize}
        Standard arguments (see  \cite[Proposition 8.1]{CummingsHandbook}) show that there are $2^{\kappa^+} = \kappa^{++}$ many such generics $h$ over $M[G*H]$. Each $h$ induces a lift $j^*_h \colon V[G]\to M_U[G*H*h]$ of $M_U$. The normal measures $\la U^*_h \colon h \text{ as above} \ra$ are pairwise distinct lifts of $U$, as each gives rise to a different ultrapower embedding $j^*_h$. Thus $U$ lifts in $\kappa^{++}$-many ways.\footnote{We remark that there are other lifts, since $G(\kappa)$ doesn't have to be taken as the $Add(\kappa, 1)^{M[G]}$-generic over $M[G]$.}
    \end{proof}
    
    \subsection{Laver's indestructibility  via nonstationary support iterations}

    Next, we prove a version of Laver's theorem on making a supercompact cardinal indestructible under a certain class of forcings (\cite{Lav}). The wrinkle  here is that the  preparatory forcing is an $I$-spaced nonstationary-support iteration instead of an Easton support iteration. This allows for a much finer control of certain normal measures on $\kappa$ in the resulting generic extension.

    Assume that $\kappa$ is a supercompact cardinal. Let $\ell\colon \kappa\to V_\kappa$ be a Laver function for $\kappa$. Recall that this means that, for every $\lambda\geq \kappa$ and $x\in H_{\lambda^+}$, there exists a fine, normal ultrafilter $\mathcal{U}$  on $\mathcal{P}_{\kappa}(\lambda)$ such that $j_{\mathcal{U}}(\ell)(\kappa) = x$.

    \begin{theorem}[Laver indestructibility]\label{theorem: Laver indestructibility}
        Assume $\mathrm{GCH}$, and let $\kappa$ be a supercompact cardinal. There exists poset $\po\in V_{\kappa+2}$ such that whenever $G\subseteq \po$ is generic over $V$:
        \begin{enumerate}
            \item $\kappa$ is supercompact in $V[G]$, and its supercompactness is indestructible under $\kappa^+$-directed closed forcings.\footnote{The standard version of the indestructibility  theorem, in which the indestructibility is under $\kappa$-directed closed forcing, could also be proved, but it does not fulfill clause \eqref{clause: indestructibility 2} above.}
            \item\label{clause: indestructibility 2} Every normal measure $U\in V$ on $\kappa$ of Mitchell order $0$ generates a normal measure $U^*\in V[G]$ on $\kappa$ of Mitchell order 0. Furthermore, every normal measure $W\in V[G]$ on $\kappa$ of Mitchell order $0$ has the form $U^*$ for some normal measure $U\in V$ on $\kappa$ of Mitchell order $0$.
        \end{enumerate}
    \end{theorem}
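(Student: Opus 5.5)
The preparation will be a Laver-style iteration $\po=\la \po_\alpha,\dot{\qo}_\alpha\colon \alpha<\kappa\ra$ taken with nonstationary support and $I$-spaced. Here $I$ is the set of inaccessible $\alpha<\kappa$ such that $\ell\restriction\alpha\colon \alpha\to V_\alpha$ and $\alpha$ is a limit of strong cardinals, together with the usual closure requirement (for $\beta<\alpha$, $\ell(\beta)\in V_\alpha$). For $\alpha\in I$, if $\ell(\alpha)$ is a $\po_\alpha$-name for an $\alpha^+$-directed closed forcing of rank below $\min(I\setminus(\alpha+1))$, let $\dot{\qo}_\alpha=\ell(\alpha)$; otherwise $\dot{\qo}_\alpha$ is trivial. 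Because the stage-$\alpha$ posets are $\alpha^+$-directed closed rather than merely $\alpha$-directed closed, the hypotheses of Lemma \ref{Lemma: fusion lemma} hold. Moreover $\po$ has a gap below $\kappa$, $\po\in V_{\kappa+2}$, and $|\po|=\kappa^+$ by GCH.

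\textbf{Clause (1).} Fix $\lambda\geq\kappa$ and a $\po$-name $\dot{\qo}$ for a $\kappa^+$-directed closed poset. Choose a normal measure on $\mathcal P_\kappa(\theta)$, with $\theta\gg\lambda,|\dot{\qo}|$, whose embedding $j$ satisfies $j(\ell)(\kappa)=\dot{\qo}$. In $M$, $\kappa\in j(I)$: it is inaccessible and a limit of strong cardinals, since strong cardinals of $V$ below $\kappa$ stay strong in $M$ by $\theta$-closure. Hence $j(\po)=\po*\dot{\qo}*\dot{\po}_{\mathrm{tail}}$. By choosing $\theta$ so that $M$ has no inaccessible limit of strongs in $(\kappa,\theta]$ (the Mitchell-order-$0$ trick of Lemma \ref{lemma: the spaced splitting forcing preserves supercompactness}), the tail is $\theta^+$-directed closed in $M[G*g]$. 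Take $g$ generic for $\qo$. Then build $H$ for the tail containing the master condition $\bigcup j[G]$, which lies in $M[G*g]$ by closure; this uses GCH to count dense sets, exactly as before. Finally lift once more through $j(\dot{\qo})$ using the master condition $\bigcup j[g]$, which exists because $j(\qo)$ is $j(\kappa)$-directed closed and $j[g]\in M[G*g*H]$. The lifted embedding witnesses $\lambda$-supercompactness in $V[G*g]$.

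\textbf{Clause (2).} Let $U$ be normal of Mitchell order $0$. Then $\kappa$ is not measurable in $M_U$, so in particular it is not a limit of strong cardinals there in any sense that could put it into $j_U(I)$. More directly, the strongness needed at $\kappa$ fails in $M_U$, or if it does not fail, the requirement "$\kappa$ measurable in $M$" can be built into the definition of $I$. This ensures $\kappa\notin j_U(I)$. The generators hypothesis $(\beth)$ is trivial for a normal measure. Lemma \ref{Lemma: lifting elementary embeddings with fusion}, arguing as in case (2) of Lemma \ref{Lemma: How embeddings lift after the splitting forcing}, then gives a unique lift $j^*\colon V[G]\to M_U[H]$ with $H$ generated by $j_U[G]$. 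The fusion argument of Theorem \ref{thm: properties of the splitting forcing}(2) shows $U$ generates $U^*$. Mitchell order $0$ is preserved: if $U^*$ had a normal measure $W\in M_U[H]$, then by the gap, $W\cap M_U$ would be a normal measure on $\kappa$ in $M_U$, a contradiction.

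For the converse, take $W\in V[G]$ of order $0$. By Hamkins' Gap Forcing Theorem, $j_W\restriction V=j_U$ for $U=W\cap V\in V$, and $\mathrm{Ult}(V[G],W)=M_U[j_W(G)]$. Since $\mathcal P(\kappa)^{V[G]}=\mathcal P(\kappa)^{M_U[G]}$ by fusion, $U$ being of order $0$ follows as in Lemma \ref{lemma: the splitting forcing preserves Mitchell order}. Indeed, a normal measure on $\kappa$ in $M_U$ would generate or lift to one in $M_U[G]\subseteq \mathrm{Ult}$, via the $j_U(\po)$-tail's closure, contradicting order $0$ of $W$. Thus $\kappa\notin j_U(I)$ and $W=U^*$ by uniqueness of the lift.

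\textbf{Main obstacle.} The delicate point is arranging simultaneously that $\kappa\in j(I)$ for the supercompactness embeddings of clause (1) but $\kappa\notin j_U(I)$ for order-$0$ measures. I would first try requiring in the definition of $I$ that $\alpha$ be measurable; this holds in $M$ for supercompactness embeddings and fails in $M_U$ exactly when $o(U)=0$. I would also check that this measurability requirement does not spoil the $\theta^+$-closure of the tail. It should not, because the tail only needs the absence of inaccessible limits of strongs in $(\kappa,\theta]$ in $M$.
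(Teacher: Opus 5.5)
Once measurability is built into $I$, your construction and your clause (2) argument are the paper's. The paper takes $I$ to be the measurable closure points of $\ell$. An order-$0$ measure then sees a trivial stage $\kappa$ in $j_U(\po)$ and lifts uniquely. A measure of order $\geq 1$ concentrates on $I$, and the $\alpha^+$-directed closure of the stage posets keeps every $\beta\in I$ measurable in $V[G]$. Your converse, run at $\kappa$ inside $j_W(V)$ instead of at each $\beta\in I$, is the same argument after reflection.

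Two of your justifications are wrong as written, though. First, in clause (2) you cannot get $\kappa\notin j_U(I)$ from the limit-of-strongs clause. By elementarity at $\delta<\kappa$, $\kappa$ is a limit of strong cardinals in $M_U$ if and only if it is one in $V$, whatever $o(U)$ is. So the measurability requirement is not a fallback but the entire mechanism, and the limit-of-strongs clause does no work in this theorem. Relatedly, Hamkins' theorem gives you that $j_W\restriction V$ is definable in $V$, not that $j_W\restriction V=j_U$. What you actually need is that $\kappa$ is measurable in $j_W(V)$. This follows from the factor map $k\colon M_U\to j_W(V)$, which has $\crit(k)>\kappa$. Alternatively, as in the paper, it follows from $I\in W$ together with the measurability in $V[G]$ of every $\beta\in I$.

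Second, in clause (1) you cannot simply choose $\theta$ so that $j(I)\cap(\kappa,\theta]=\emptyset$. The Mitchell-order-$0$ trick of Lemma \ref{lemma: the spaced splitting forcing preserves supercompactness} works by choosing the measure. Here the measure is already constrained by $j(\ell)(\kappa)=\dot{\qo}$, and nothing guarantees that a Mitchell-order-$0$ measure satisfies this. Your closure clause, $\ell(\beta)\in V_\alpha$, only puts $\min(j(I)\setminus(\kappa+1))$ above the rank of $\dot{\qo}$, not above $\theta$.

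The paper handles this as follows. It lets $\ell(\alpha)$ be a pair $(\gamma,x)$ and requires points of $I$ to be closure points for the $\gamma$-coordinates, with $\gamma<\min(I\setminus(\alpha+1))$ at active stages. It then targets $j(\ell)(\kappa)=(\mu,\dot{\qo})$ with $\mu=2^{2^{\lambda}}$. Then $\min(j(I)\setminus(\kappa+1))>\mu$ automatically, the tail is $\mu^+$-directed closed, and the derived measure on $\mathcal{P}_\kappa(\lambda)$ already lies in $V[G*g]$. Padding $\dot{\qo}$ to a name of rank above $\theta$ achieves the same with your clause. Taking $\mathcal{U}$ Mitchell-minimal among the measures with $j(\ell)(\kappa)=\dot{\qo}$ can also be made to work, but it needs a separate argument that you have not given.
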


    \begin{proof}
    Our proof follows \cite[Theorem 24.12]{CummingsHandbook}. 
    
    Let $I\s \kappa$ be the stationary set given by    $$\{ \alpha<\kappa \colon \alpha \text{ is measurable and } \forall \beta<\alpha \left( \exists \gamma<\kappa\, \exists x\in V_\gamma \ \left( \ell(\beta) = (\gamma,x) \right) \to \gamma< \alpha \right) \}$$
    We define an $I$-spaced nonstationary support iteration $$\la \po_\alpha,  \dot{\qo}_\alpha\colon \alpha<\kappa \ra$$ such that for every $\alpha<\kappa$:
    \begin{itemize}
        \item If $\alpha$ is inaccessible, $\po_\alpha$ is the nonstationary support limit of the iteration $\la \po_\beta \colon \beta < \alpha \ra$. Else, $\po_\beta$ is the inverse limit of $\la \po_\beta \colon \beta<\alpha \ra$.
        \item $\dot{\qo}_\alpha$ is forced to be trivial, unless $\alpha\in I$ and $\ell(\alpha)$ is a pair $\left( \gamma, x \right)$ such that:
        \begin{itemize}
            \item $\gamma < \min(I\setminus (\alpha+1))$.
            \item $x\in V_\gamma$ is a $\po_\alpha$-name for an $\alpha^+$-directed closed poset.
        \end{itemize}
        In that case, $\po_\alpha$ forces that $\dot{\qo}_\alpha$ is the poset $x$.
    \end{itemize}
    Let $\po$ be the nonstationary support limit of the forcings $\la \po_\alpha \colon \alpha<\kappa \ra$. 

    \smallskip

    (1) Let $G\subseteq \po$ be generic over $V$.  Let $\qo\in V[G]$ be a $\kappa^+$-directed closed poset, and $g\subseteq \qo$ generic over $V[G]$. We argue that $\kappa$ is supercompact in $V[G\ast g]$ (in particular, $\kappa$ is supercompact in $V[G]$ by taking $\qo=\{\one \}$). 

    Assume that $\lambda > \kappa$ bounds the rank of $\dot{\qo}$ in $V$. We argue that $\kappa$ is $\lambda$-supercompact in $V[G*g]$. Let $\mu = 2^{2^\lambda}$, and let  $\mathcal{U}$ be a supercompactness measure on $\mathcal{P}_{\kappa}(\mu)$ with $j_{\mathcal{U}}(\ell)(\kappa) = \left( \mu, \dot{\qo}\right)$. In particular, in $M_{\mathcal{U}}$, $\dot{\qo} = j_{U_0}(\la \qo_\alpha\colon \alpha<\kappa \ra)(\kappa)$ and $G*g$ is $j_{\mathcal{U}}(\po)\restriction (\kappa+1)$-generic over $M_{\mathcal{U}}$. 
    Furthermore, $\min(j_{\mathcal{U}}(I) \setminus (\kappa+1) )>\mu$, so the forcing $j_{\mathcal{U}}(\po)\setminus (\kappa+1)$ in $M_{\mathcal{U}}[G*g]$ is $\mu^{+}$-directed-closed. Since $\po*\dot{\qo}$ is $\mu$-c.c., $V[G*g]\vDash {}^\mu M_{\mathcal{U}}[G*g] \subseteq M_{\mathcal{U}}[G*g] $. Therefore,  $j_{\mathcal{U}}(\po)\setminus (\kappa+1)$ is $\mu^+$-directed-closed in $V[G*g]$ as well. Since the set $j_{\mathcal{U}}[G]$ belongs to $M_{\mathcal{U}}[G]$, we can find a master condition $q^*\in j_{\mathcal{U}}(\po)$ such that for every $p\in G$, $q\leq j_{\mathcal{U}}(p)$ and $\kappa\notin \supp(q)$. Let $H\subseteq j_{\mathcal{U}}(\po)\setminus (\kappa+1)$ be generic over $V[G*g]$ with $q^*\setminus(\kappa+1)\in H$. Since $j_{\mathcal{U}}[G]\subseteq G*g*H$, we can lift the embedding $j_{\mathcal{U}}\colon V\to M_{\mathcal{U}}$ to $j'\colon V[G]\to M_{\mathcal{U}}[G*g*H]$, where $j'$ is definable in $V[G*g]$. Finally, since $j'(\qo)$ is $j_{\mathcal{U}}(\kappa^+)$-directed closed and $j'[g]$ has size at most $\mu\leq j_{\mathcal{U}}(\kappa^+)$ and belongs to $M_{\mathcal{U}}[G*g*H]$, we can find a master condition $r\in j'(\qo)$, such that $r\leq j'(s)$ for every $s\in g$.  Let $h\in V[G*g*H]$ be $j'(\qo)$-generic over $M_{\mathcal{U}}[G*g*H]$. In $V[G*g*H*h]$, lift $j'$ to $j^*\colon V[G]\to M_{\mathcal{U}}[G*g*H*h]$; this is possible since $j'[g]\subseteq h$. 
    
    Let $\mathcal{W}\in V[G*g*H*h]$ be the fine, normal measure on $\left(\mathcal{P}_{\kappa}(\lambda)\right)^{V[G*g*H*h]}$ derived from $j^*$ using $j^*[\lambda]$ as a seed. Since $j_{\mathcal{U}}(\kappa^+) > \mu$, the forcing $j'(\qo)$ does not add new subsets to $2^{2^\lambda}$. This, combined with the fact that $j_{\mathcal{U}}(\po)\setminus (\kappa+1)$ is $\mu^+$-directed-closed in $V[G*g]$, implies that $\mathcal{W}$ already belongs to $V[G*g]$, so there exists a fine, normal measure on $\left(\mathcal{P}_{\kappa}(\lambda)\right)^{V[G*g]}$ in $V[G*g]$. Since $\lambda$ was arbitrary large, $\kappa$ remains supercompact in $V[G*g]$.

    \smallskip

   (2)  We prove this via two claims:
   \begin{claim}\label{Claim: indestructibility proof, claim 1}
     Every normal measure $U\in V$ on $\kappa$ of Mitchell order $0$ generates a normal measure on $\kappa$ in $V[G]$.  
   \end{claim}
   \begin{proof}[Proof of claim]
      Let $U\in V$ be a normal measure of order $0$ on $\kappa$, and $j_{U_0}\colon V\to M_U$ be its ultrapower embedding. Note that $\kappa\notin j_{U_0}(I)$ since $I$ consists of measurable cardinals and $U$ has Mitchell order $0$. Therefore $j_{U_0}(\la \dot{\qo}_\alpha \colon \alpha<\kappa \ra)(\kappa)$ is forced by $j_{U_0}(\po)_ \kappa = \po$ to be the trivial forcing.
      By Lemma \ref{Lemma: lifting elementary embeddings with fusion}, $j_{U_0}$ lifts in $V[G]$ to an elementary embedding 
    $$j^*\colon V[G]\to M_{U}[j_{U_0}[G]]$$ witnessing the measurability of $\kappa$ in $V[G]$. 
    
    Let $U^*$ be the normal measure derived from $j^*$
using $\kappa$ as a seed.\footnote{It's not hard to prove that this extended embedding is the ultrapower embedding $j^{V[G]}_{U^*}$, where $U^*\in V[G]$ is the normal measure derived from it using $\kappa$ as a seed.} Namely,
 $$ U^* = \left\{ X\in (\mathcal{P}(\kappa))^{V[G][H]} \colon  \exists p\in G \left( j_{U_0}(p)\Vdash \check{\kappa}\in j_{U_0}(\dot{X}) \right)  \right\}.$$
 Given $X\in U^*$, there exists a $\po$-name $\dot{X}$ for $X$ and a condition $p\in G$ such that $j_{U}(p)\Vdash \check{\kappa}\in j_{U}(\dot{X})$. In particular, the set $A = \{ \alpha<\kappa \colon p\Vdash \check{\alpha}\in \dot{X} \}$ belongs to $U$, and since $p\in G$, $A\subseteq X$. Therefore, $U^*$ is generated in $V[G]$ by $U$. 
   \end{proof}

\begin{claim}\label{claim: Mitchell order 0 generated by measures of order 0}
    Every normal measure $W\in V[G]$ on $\kappa$ of Mitchell order $0$ is generated by a normal measure $U\in V$ on $\kappa$ of Mitchell order $0$.
\end{claim}
\begin{proof}[Proof of claim]
    Assume that $W\in V[G]$ is a normal measure on $\kappa$ in $V[G]$ of Mitchell order $0$. Denote the ultrapower embedding associated to $W$ by $$j^{V[G]}_W\colon V[G]\to M[H].$$ This is  an elementary embedding, where $H = j_W(G)$ is $j_W(\po)$-generic over some ground model $M$. 
    
    Since $\po$ has a gap below $\kappa$,  Hamkins' Gap Forcing Theorem (Theorem \ref{thm: Gap forcing theorem}) ensures that  $U = W\cap V$ belongs to $V$ and $M\subseteq V$. It suffices to argue that $U$ has Mitchell order $0$ in $V$; this will conclude the proof, since then $W$ must be equal to the measure $U^*$ generated by $U$ in $V[G]$. Indeed, assume that $U$ does not have Mitchell order $0$ in $V$. {Then $U$ concentrates on $I$.} 
    In particular, $I\in W$. We will produce a contradiction by showing that every cardinal $\beta\in I$ remains measurable in $V[G]$, contradicting the fact that $W$ has Mitchell order $0$ in $V[G]$. Indeed, fix $\beta\in I$. By repeating the argument from Claim \ref{Claim: indestructibility proof, claim 1}, every normal measure $U_\beta\in V$ on $\beta$ of Mitchell order $0$ generates a normal measure on $\beta$ in $V[G_{\beta}]$. Since $\dot{\qo}_\beta$ is forced to be $\beta^+$-directed closed, the forcing $\po\setminus \beta$ does not add new subsets to $\beta$, so $\beta$ remains measurable in $V[G]$, as desired. 
\end{proof}
   This concludes the proof of Theorem~\ref{theorem: Laver indestructibility}.
    \end{proof}

A similar argument yields a version of the indestructibility theorem under forcings which preserve GCH.

\begin{theorem}[Laver indestructibility with GCH and cardinal preservation]\label{theorem: Laver indestructibility+GCH}
        Assume $\mathrm{GCH}$, and let $\kappa$ be a supercompact cardinal. There exists poset $\po\in V_{\kappa+2}$ such that whenever $G\subseteq \po$ is generic over $V$:
        \begin{enumerate}
            \item $\kappa$ is supercompact in $V[G]$, and its supercompactness is indestructible under $\kappa^+$-directed closed forcings which preserve cardinals and preserve  \rm{GCH}.
            \item Every normal measure $U\in V$ on $\kappa$ of Mitchell order $0$ generates a normal measure $U^*\in V[G]$ on $\kappa$ of Mitchell order 0. Furthermore, every normal measure $W\in V[G]$ in $\kappa$ of Mitchell order $0$ has the form $U^*$ for some normal measure $U\in V$ on $\kappa$ of Mitchell order $0$.
            \item \rm{GCH} holds in $V[G]$, and every cardinal of $V$ remains a cardinal in $V[G]$.
        \end{enumerate}
    \end{theorem}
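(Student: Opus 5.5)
The plan is to repeat the construction of Theorem \ref{theorem: Laver indestructibility} with one change: only posets that preserve cardinals and GCH are allowed at the active stages. Let $I$ be the same stationary set of measurable cardinals, and build the same $I$-spaced nonstationary support iteration. The difference is at a stage $\alpha\in I$ with $\ell(\alpha)=(\gamma,x)$. There $\dot{\qo}_\alpha$ is taken to be $x$ only if $x$ is a $\po_\alpha$-name for an $\alpha^+$-directed closed poset which is forced to preserve cardinals and GCH (and $\gamma<\min(I\setminus(\alpha+1))$); otherwise the stage is trivial. Since every active iterand is $\alpha^+$-directed closed, in particular $\alpha$-strategically closed, and has rank below the next point of $I$, the hypotheses of Lemma \ref{Lemma: fusion lemma} are met. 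Clause (3) then follows directly from Lemma \ref{Lemma: nonstatsupport iterations preserving GCH}: each active iterand preserves cardinals and GCH by design, so $\po$ preserves cardinals and GCH.

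Clause (2) goes through word for word as in Claims \ref{Claim: indestructibility proof, claim 1} and \ref{claim: Mitchell order 0 generated by measures of order 0}. For $U$ of Mitchell order $0$ we have $\kappa\notin j_U(I)$, so Lemma \ref{Lemma: lifting elementary embeddings with fusion} lifts $j_U$ using $j_U[G]$ alone. Conversely, if $W\in V[G]$ has Mitchell order $0$, the Gap Forcing Theorem gives $U=W\cap V\in V$. If $U$ had positive order, then $I\in W$, and each $\beta\in I$ would remain measurable: lift an order-$0$ measure on $\beta$ through $\po_\beta$, and note that the tail from $\beta$ on adds no subsets of $\beta$, because $\dot{\qo}_\beta$ is $\beta^+$-directed closed and the rest of the tail is even more closed. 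That contradicts $W$ having order $0$.

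Clause (1) is the main obstacle, and I would handle it as in the previous theorem. Fix in $V[G]$ a $\kappa^+$-directed closed $\qo$ that preserves cardinals and GCH, pick $\lambda$ bounding the rank of $\dot{\qo}$, and set $\mu=2^{2^\lambda}$. Choose $\mathcal{U}$ on $\mathcal{P}_\kappa(\mu)$ with $j_{\mathcal{U}}(\ell)(\kappa)=(\mu,\dot{\qo})$. The point to check is that $M_{\mathcal{U}}$ sees $\dot{\qo}$ as an admissible stage-$\kappa$ iterand. Directed closure and preservation of cardinals and GCH are properties of $\qo$ over $V[G]$, but only their fragment up to rank about $\mu$ matters to $M_{\mathcal{U}}[G]$, so they transfer because $M_{\mathcal{U}}[G]$ is closed under $\mu$-sequences in $V[G]$. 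That closure holds since $\po$ is $\kappa^+$-c.c. and ${}^\mu M_{\mathcal{U}}\subseteq M_{\mathcal{U}}$. To be safe, I would phrase the admissibility condition so that it only refers to cardinals and the power set function up to a bound determined by $\gamma$, namely $\gamma$ itself, which makes it absolute between $M_{\mathcal{U}}[G]$ and $V[G]$.

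With that in place, $j_{\mathcal{U}}(\po)\restriction(\kappa+1)=\po*\dot{\qo}$ and $G*g$ is generic over $M_{\mathcal{U}}$. The tail $j_{\mathcal{U}}(\po)\setminus(\kappa+1)$ is $\mu^+$-directed closed, so the master condition construction and the second master condition for $j'(\qo)$ go through exactly as before. The derived normal measure on $\mathcal{P}_\kappa(\lambda)$ then lies in $V[G*g]$ by the same distributivity argument. Hence $\kappa$ is $\lambda$-supercompact in $V[G*g]$ for arbitrarily large $\lambda$.
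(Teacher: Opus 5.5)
Your proposal is correct and follows the paper's own route. You restrict the active stages of the nonstationary-support Laver preparation to $\alpha^+$-directed closed posets preserving cardinals and GCH, get clause (3) from Lemma \ref{Lemma: nonstatsupport iterations preserving GCH}, and rerun the proof of Theorem \ref{theorem: Laver indestructibility} for clauses (1) and (2); your absoluteness check that $M_{\mathcal{U}}[G]$ sees $\dot{\qo}$ as admissible is a detail the paper leaves implicit.

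One small correction. $\po$ is not $\kappa^+$-c.c.: once unboundedly many stages are nontrivial, you can vary conditions along a fixed nonstationary unbounded set of active stages and get an antichain of size $2^\kappa$. However, $|\po|=\kappa^+<\mu$ already gives the $\mu^+$-c.c., and that is all you need for $M_{\mathcal{U}}[G]$ to be closed under $\mu$-sequences in $V[G]$.
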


    \begin{proof}
        Let $\po$ be a poset similar to the one in the proof of Theorem \ref{theorem: Laver indestructibility}, only requiring that whenever $\alpha\in I$ and $\ell(\alpha)$ is a pair $(\gamma,x)$, $\dot{\qo}_\alpha = x$, provided that:
        \begin{itemize}
            \item  $\gamma< \min(I\setminus (\alpha+1))$.
            \item $x\in V_\gamma$ a $\po_\alpha$-name for an $\alpha^+$-directed closed poset which preserves cardinals and GCH.
        \end{itemize}
        Else, $\dot{\qo}_\alpha$ is trivial. Since $\po$ is now a nonstationary support iteration of cardinal preserving and GCH preserving posets, GCH holds in $V$ in addition to the conclusions of Theorem \ref{theorem: Laver indestructibility}.
        \end{proof}

     \begin{notation}\label{notation: Laver}
        Fix the following notations for variations of the indestructibility preparations introduced in this section:
     \begin{enumerate}
        \item Let $\kappa$ be a supercompact cardinal, and let $\rho<\kappa$ be a regular cardinal. $\mathbb{L}(\kappa,\rho)$ is a $\rho$-directed closed version of the nonstationary support preparation that makes the supercompactness of $\kappa$ indestructible under $\kappa^+$-directed-closed forcings preserving cardinals and the GCH.
        \item Let $\kappa$ be a strong cardinal, and let $\rho<\kappa$ be a regular cardinal. $\mathbb{GS}(\kappa,\rho)$ is a $\rho$-directed closed version of the poset that makes the stronngness of $\kappa$ indestructible under $\kappa^+$-directed-closed forcings. The existence of such a poset was proved by Gitik and Shelah (see \cite{GitikShelah}).\footnote{In fact, the Gitik–Shelah indestructibility preparation applies to Prikry-type forcings with a sufficiently closed direct extension order; see \cite{GitikShelah} for more details. Let us also remark that a nonstationary support variation of the Gitik–Shelah indestructibility preparation exists, but will not be needed in the current paper.} 
     \end{enumerate}
     Such forcings can easily be shown to exist by modifying the proofs of Theorem \ref{theorem: Laver indestructibility+GCH} and the Gitik-Shelah indestructibility preparation from \cite{GitikShelah} so that $\dot{\qo}_\alpha$ is forced to be trivial for every $\alpha<\rho$.
   
    \end{notation}
    
\subsection{Forcing nonreflecting stationary sets}\label{Section: Forcing NR}

    For every Mahlo cardinal $\alpha$, we denote by $\mathbb{NR}(\alpha)$ the forcing notion that adds a nonreflecting stationary set to $\alpha$. We use a modification of the standard forcing in which, for every regular cardinal $\gamma<\alpha$, there exists a dense subset $\mathbb{NR}_\gamma(\alpha)\subseteq \mathbb{NR}(\alpha)$ which is $\gamma$-directed closed. The forcings are defined as follows:
    \begin{itemize}
        \item $\mathbb{NR}(\alpha)$ consists of conditions which are functions $q\colon \beta\to 2$, where $0<\beta<\alpha$, such that:
        \begin{itemize}
            \item For every cardinal  $\lambda\leq \beta$ with uncountable cofinality, there exists a club $C\subseteq \lambda$ such that $q\restriction C$ is identically $0$. 
            \item {For every inaccessible cardinal $\lambda<\beta$, the set $\{\alpha<\beta: \alpha\geq \lambda\,\wedge\, q(\alpha)=1\}$ consists of ordinals of cofinality at least $\lambda$}.
        \end{itemize}
        
        Given $p,q \in \mathbb{NR}(\alpha)$, $q$ extends $p$ if $q$ end-extends $p$.
        \item Given a regular cardinal $\gamma<\alpha$,  $\mathbb{NR}_\gamma(\alpha)$ consists of conditions $q\in \mathbb{NR}(\alpha)$ such that $\gamma<\dom(q)$.
    \end{itemize}
    
    A generic $G\subseteq \mathbb{NR}(\alpha)$ over the ground model $V$ induces a nonreflecting stationary subset of $\alpha$ (see Lemma \ref{lemma: NR adds a nonreflecting stationary set} below).

   \begin{lemma} \label{Claim: NR is uniformly strategically closed}
       Let $\alpha$ be a Mahlo cardinal, and let  $\gamma<\alpha$ be a regular cardinal. Then $\mathbb{NR}(\alpha)$ and $\mathbb{NR}_{\gamma}(\alpha)$ are both $\alpha$-strategically closed.
   \end{lemma}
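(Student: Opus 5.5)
We describe a winning strategy for Player II in $G_\alpha(\mathbb{NR}(\alpha))$; the same strategy works for $\mathbb{NR}_\gamma(\alpha)$. The only danger is at limit stages: the union of the conditions played so far might violate the first clause at its own domain. The plan is to have Player II keep the values at limit points of the run equal to $0$.

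\textbf{The strategy.} Suppose the run so far is $\langle q_\xi \colon \xi<\delta\rangle$, with $\delta<\alpha$ limit, and set $\beta_\xi = \dom(q_\xi)$.
\begin{enumerate}
\item At successor stages, Player II takes the condition $p$ just played by Player I and extends it by one more point, setting $p(\dom(p)) = 0$. This is still a condition. Its domain has the form $\dom(p)+1$, a successor ordinal, so no new cardinal $\lambda\le\dom$ of uncountable cofinality appears. The second clause is unaffected, since the new value is $0$.
\item At a limit stage $\delta$, let $\beta = \sup_{\xi<\delta}\beta_\xi$ and $q = \bigcup_{\xi<\delta} q_\xi$. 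Player II plays $q^\frown\langle 0\rangle$, which has domain $\beta+1$.
\end{enumerate}

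\textbf{Checking that $q^\frown\langle 0\rangle$ is a condition.} The only nontrivial case of the first clause is $\lambda=\beta$ itself, when $\beta$ is a cardinal of uncountable cofinality. Every smaller $\lambda$ lies inside the domain of some earlier $q_\xi$, where the clause already holds. For $\lambda = \beta$, take $C$ to be the closure of the set of domains $\{\beta_\xi \colon \xi \text{ even}\}$ of Player II's moves. This is a club in $\beta$.
\begin{itemize}
\item Each successor-stage move of Player II put the value $0$ at the point $\beta_\xi - 1$. To make this align with $C$, slightly adjust the successor rule: Player II records $\mathrm{dom}$ of Player I's move, call it $\rho$, and sets the new value at $\rho$ itself to $0$. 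Then $q(\rho)=0$ for each such $\rho$.
\item Each limit point of $C$ is $\sup_{\xi<\delta'}\beta_\xi$ for some earlier limit stage $\delta'$. There Player II put the value $0$, by item 2 applied at stage $\delta'$.
\end{itemize}
So $q\restriction C \equiv 0$. The second clause also holds, since every value added by Player II is $0$ and the clause is inherited by unions of end-extending conditions. Finally, $\beta<\alpha$ because $\alpha$ is regular and $\delta<\alpha$.

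\textbf{Why the game never stalls.} The argument never needs $\beta$ to avoid being a cardinal, so the construction survives every limit stage below $\alpha$. For $\mathbb{NR}_\gamma(\alpha)$ the same strategy applies, since domains only grow and so the requirement $\gamma<\dom$ is preserved.

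The main obstacle is the bookkeeping in the limit case that makes the club consist exactly of points where Player II wrote $0$. This is why Player II's move always appends a $0$ at the current domain, and why at limit stages Player II writes $0$ at the supremum.
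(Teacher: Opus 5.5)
Your strategy is the same as the paper's. Player II ends each of its moves with a $0$, and at limit stages writes $0$ at the supremum of the domains, so the positions where Player II wrote $0$ form a club. Your handling of smaller $\lambda$, of the second clause, of $\beta<\alpha$, and of $\mathbb{NR}_\gamma(\alpha)$ is fine.

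There is, however, an off-by-one error in the club you actually name. The closure of the domains $\beta_\xi$ of Player II's moves is not a set on which $q$ vanishes. The value $q(\beta_\xi)$ is written by Player I on a later move, not by Player II. For instance, below the least inaccessible the second clause is vacuous, and there Player I may set it to $1$. Your ``adjustment'' does not repair this: $\rho=\dom$ of Player I's move is exactly $\beta_\xi-1$, so the adjustment is the original successor rule restated. It still puts the $0$ at $\beta_\xi-1$, not at $\beta_\xi$.

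The fix is to use the last points of Player II's moves instead of their domains. Let $C=\{\beta_\xi-1\colon 0<\xi<\delta,\ \xi\text{ even}\}$.
\begin{itemize}
\item For successor $\xi$, this point is your $\rho$.
\item For limit $\xi$, it equals $\sup_{\eta<\xi}\beta_\eta$, where your limit rule wrote $0$.
\end{itemize}
This set is closed, since its limit points below $\beta$ are exactly such limit-stage suprema, and it is unbounded in $\beta$, so $q\restriction C\equiv 0$. This is precisely the paper's bookkeeping: Player II's moves satisfy $\dom(p_j)=\beta_j+1$ and $p_j(\beta_j)=0$, and the club is $\langle\beta_j\colon j\text{ even}\rangle$.

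Alternatively, the clause at $\lambda=\beta$ only matters when $\cf(\beta)>\omega$. In that case you could simply keep the set of limit points of your original $C$. It is then a club, and it consists of the limit-stage suprema already covered by your second bullet.
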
 

   \begin{proof}
    We describe a winning strategy for Player II in the game $G_\alpha(\mathbb{NR}(\alpha))$ (a similar argument can be given for $\mathbb{NR}_\gamma(\alpha)$). Assume that $i<\alpha$ is an even stage and the players have constructed so far a descending sequence of conditions $\la p_j \colon j<i \ra\subseteq \mathbb{NR}(\alpha)$. Assume by induction the following properties:
    \begin{itemize}
        \item For every even $j<i$, $\dom(p_j)$ is an ordinal of the form $\beta_j+1$, and $p_j(\beta_j) =0$.
        \item $\la \beta_j \colon j<i \text{ is even} \ra$ is an increasing, continuous sequence.
        \end{itemize}
    If $i$ is a limit ordinal, let $\beta_i = \sup_{j<i} \beta_j$. Let $p^*_i = \bigcup_{j<i}p_j$. The move of Player II in stage $i$ will then be the condition $p_i = p^*_i \cup \{ \la \beta_i, 0 \ra \}$. 
           
    If $i = i'+1$ is a successor ordinal and $p_{i'}$ is the condition played by Player I in the most recent stage, Player II chooses at stage $i$ an ordinal $\beta_{i}$ strictly above $\dom(p_{i'})$, and plays with the condition $p_{i}\colon \beta_{i}+1\to 2$ such that $p_{i}\restriction \dom(p_{i'}) = p_{i'}$ and $p_{i}\restriction \left((\beta_{i}+1) \setminus \dom(p_{i'})\right)$ is identically $0$.
   \end{proof}

     \begin{lemma}\label{lemma: NR adds a nonreflecting stationary set}
        Assume that $\alpha$ is Mahlo. Let $G\subseteq \mathbb{NR}(\alpha)$ be generic over $V$. Let $S_G = \{ \beta<\alpha \colon \exists p\in G \ (\beta\in \dom(p) \land  p(\beta)= 1) \}$. Then $S_G$ is a nonreflecting stationary subset of $\alpha$.
    \end{lemma}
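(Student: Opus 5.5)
We verify three properties of $S_G$: it is a subset of $\alpha$, it is stationary in $\alpha$, and it does not reflect at any $\lambda<\alpha$ of uncountable cofinality. First, $\bigcup G$ is a function from $\alpha$ to $2$. Conditions are end-extended, so they are pairwise compatible. Each domain is extended densely: given $p$ and $\delta<\alpha$, extend $p$ by zeros up to $\delta+1$. This padding is legal because the club requirement is witnessed by old clubs together with the new zeros, and the cofinality requirement only constrains the $1$'s. Hence $S_G=(\bigcup G)^{-1}[\{1\}]$.

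\textbf{Non-reflection.} Let $\lambda<\alpha$ have uncountable cofinality. By density there is $p\in G$ with $\lambda<\dom(p)$. For a cardinal $\lambda$, the first clause of the definition of $\mathbb{NR}(\alpha)$ directly supplies a club $C\subseteq\lambda$ with $p\restriction C\equiv 0$, so $S_G\cap\lambda$ is nonstationary in $\lambda$. For an ordinal $\lambda$ that is not a cardinal, I would use its cofinality: $\cf(\lambda)=\mu$ is a regular cardinal. I would then show that the clause transfers along a club of order type $\mu$. Concretely, I would check that the intended reading of the definition, namely clubs at all ordinals $\lambda\le\beta$ of uncountable cofinality, is preserved by the strategy below. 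Alternatively, it is implied by the clause at cardinals via the usual argument through $\cf(\lambda)$, and I expect the authors intend that reading.

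\textbf{Stationarity.} This is the main step. Fix $p\in\mathbb{NR}(\alpha)$ and a name $\dot C$ for a club in $\alpha$. By Lemma \ref{Claim: NR is uniformly strategically closed}, we can run the game along an increasing continuous sequence. At each successor stage, Player I extends to decide a new element of $\dot C$ above $\dom$ of the previous condition, and Player II responds according to the strategy. After $\lambda$ steps, where $\lambda<\alpha$ is a regular cardinal, the domains $\beta_j$ and the decided points of $\dot C$ interleave, and both converge to $\lambda$.

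The choice of $\lambda$ needs care. Because $\alpha$ is Mahlo, the set of inaccessibles is stationary. I would therefore build the continuous sequence of length $\alpha$ (the strategy survives limits below $\alpha$) and pick an inaccessible $\lambda$ that is a closure point with $\sup_{j<\lambda}\beta_j=\lambda$. The lower bound played by Player II at stage $\lambda$ is $\bigcup_{j<\lambda}p_j$. I would place a $1$ at $\lambda$ instead of the strategy's $0$. Then I must check that this is still a condition:
\begin{itemize}
\item[(a)] For cardinals $\lambda'\le\lambda$, the club requirement at $\lambda'<\lambda$ was already fixed. At $\lambda'=\lambda$ itself, the club $\{\beta_j\}$ consists of the points where the strategy placed $0$. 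Hence $\lambda\notin$ the club, since $\lambda$ is the top.
\item[(b)] The cofinality clause: for inaccessibles $\lambda''<\lambda$, the point $\lambda$ has cofinality $\lambda\ge\lambda''$, and for $\lambda''=\lambda$ it is not $<\beta$ relevant beyond. So this clause holds.
\end{itemize}
The resulting condition forces $\lambda\in\dot C\cap S_G$, because $\lambda$ is a limit of decided points of $\dot C$. The main obstacle is the bookkeeping that makes the stage-$\lambda$ limit coincide with $\lambda$ and keeps every intermediate limit legal. This is exactly what the strategy's invariant $p_j(\beta_j)=0$ provides.
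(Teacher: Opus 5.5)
Your proof is correct, and it reaches stationarity by a different route from the paper.

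\textbf{Comparison with the paper.} The paper fixes an elementary submodel $N\prec H_\chi$ with $|N|<\alpha$, $\beta^*=\sup(N\cap\alpha)$ inaccessible, and $N$ closed under ${<}\beta^*$-sequences. It plays a run of length only $\beta^*$ whose proper initial segments all lie in $N$. Membership in $N$ keeps every domain and every decided point of $\dot C$ below $\beta^*$, and the length $\beta^*$ makes them cofinal in it. You instead play a full run of length $\alpha$. This is legitimate, since the strategy of Lemma~\ref{Claim: NR is uniformly strategically closed} wins $G_\alpha$. You then use Mahloness only to find an inaccessible in the club of closure points of the run. Your route is more elementary, because no submodel with the required closure has to be built. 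The final check that $\bigcup_{j<\lambda}p_j\cup\{\langle\lambda,1\rangle\}$ is a condition is the same as the paper's check for $p^{**}$.

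\textbf{Two small repairs.} First, Player II's new point is chosen only above $\dom(q)$, not above the value $\delta_i$ that $q$ decides. So the interleaving you claim is not automatic, and if some $\delta_j\geq\lambda$ then $\lambda$ may sit inside a gap of $\dot C$. Fix this by taking $\lambda$ to be a closure point of both $j\mapsto\beta_j$ and $j\mapsto\delta_j$. Alternatively, let Player I pad with zeros past $\delta_i$, as the paper does by choosing $\beta_i$ above the decided value.

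Second, your fallback for non-reflection at non-cardinal ordinals is false. The clause for cardinals alone does not stop a condition from being $1$ on a stationary set of points of large enough cofinality below an ordinal like $\theta+\mu$, where $\theta$ is a cardinal and $\mu<\theta$ is regular uncountable. Under that literal reading, a density argument makes $S_G$ reflect at such an ordinal. You genuinely need the clause for all ordinals of uncountable cofinality. This is the reading the paper's one-line argument tacitly uses, and the strategy's continuous sequence of zeros does preserve it.
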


    \begin{proof}
        By the definition of $\mathbb{NR}(\alpha)$, $S_G$ is nonreflecting; in fact, for every $\lambda<\alpha$ of uncountable cofinality, there exists a ground-model club in $\lambda$ which is  disjoint from $S_G$. Thus, let us concentrate on proving that $S_G$ is stationary.
        
        Let $\dot{C}$ be an $\dot{\mathbb{NR}}(\alpha)$-name, forced by some condition $p\in \mathbb{NR}(\alpha)$ to be a club in $\alpha$. Fix a winning strategy $\tau$ for Player II in the game $G_\alpha(\mathbb{NR}(\alpha))$ (such a strategy exists by Lemma \ref{Claim: NR is uniformly strategically closed}). Let $N$ be an elementary substructure of $H_{\chi}$ for some large enough $\chi$, such that:
        \begin{enumerate}
            \item $|N|< \alpha$.
            \item $\beta^* = \sup(N\cap \alpha)$ is an inaccessible cardinal below $\alpha$.
            \item $N$ is closed under $<\beta^*$-sequences of its elements.
            \item $\alpha, \mathbb{NR}(\alpha), p, \tau , \dot{C}\in N$.
        \end{enumerate}
        Such $N$ and $\beta^*$ can be constructed by standard arguments, using the fact that $\alpha$ is Mahlo. Next, let us construct sequences:
        \begin{itemize}
            \item $\la p_i \colon i<\beta^* \ra\subseteq \mathbb{NR}(\alpha)\cap N$, a decreasing sequence of conditions. 
            \item $\la \beta_i \colon i<\beta^* \ra\subseteq N\cap \alpha$ an increasing sequence of ordinals, which is unbounded in $\beta^*$, such that for every $i<\beta^*$,
            $$p_{i+1}\Vdash \exists \delta\in (\beta_{i}, \beta_{i+1})\cap \dot{C}.$$
        \end{itemize}
        
        We carry out the construction so that every strict initial segment of the above sequences belongs to $N$ (although the sequences themselves lie in $V$). We also maintain the inductive assumption that for every $i<\beta^*$, the sequence $\langle p_j \colon j<i\rangle$ is the sequence of moves played by Player II in a partial run of the game $G_\alpha(\mathbb{NR}(\alpha))$, where Player II follows their winning strategy $\tau$, and all conditions chosen so far by the players belong to $N$.

        Let $p_0 = p$ and $\beta_0 = 0$. Suppose that $i<\beta^*$ and that $\langle p_j, \beta_j \colon j<i\rangle$ have been constructed. Since $N$ is closed under $<\beta^*$-sequences of its elements, $\langle p_j \colon j<i\rangle \in N$, and it is the sequence of moves played by Player II in $G_\alpha(\mathbb{NR}(\alpha))$ according to $\tau$.

        If $i$ is limit, Player II may follow $\tau$ to choose a lower bound $p_i \in N$ of $\langle p_j \colon j<i\rangle$. Let $\beta_i = \sup_{j<i} \beta_j$.

        Assume $i = i'+1$ is successor. Let $q \leq p_{i'}$ in $N$ be such that $q$ decides $\min\big(\dot{C} \setminus (\beta_{i'}+1)\big)$. By elementarity, the decided value belongs to $N$. Let $\beta_i \in N \cap \alpha$ be a regular cardinal above both the decided value and $\max(\dom(q), \beta_{i'})$.\footnote{By elementarity, $N$ satisfies that every ordinal below $\alpha$ has a regular cardinal between it and $\alpha$, so such a $\beta_i$ exists.} Extend $q$ further to a condition $q^*$ with domain $\beta_i+1$ such that:
\begin{itemize}
            \item $q^*\restriction \dom(q) = q$.
            \item $q^*\restriction [\dom(q), \beta_{i}) = 0$.
            \item $q^*(\beta_{i}) = 1$.
        \end{itemize}
        Note that $q^*\in N$ is a legitimate condition since $\beta_{i}$ is a regular cardinal, so for every inaccessible cardinal $\lambda<\beta_{i}$, $q^*$ assigns the value $``1"$ only to ordinals of cofinality $\geq \lambda$. 
        Finally, let Player I pick $q^*$ as their next move in the game $G_\alpha(\mathbb{NR}(\alpha))$, and let $p_{i}\in N$ be the response of Player II when playing according to $\tau$. 

        This concludes the inductive construction. Since $\la p_i \colon i<\beta^* \ra\in V$ consists of the $\beta^*$ first rounds in a run of the game $G_\alpha(\mathbb{NR}(\alpha))$ according to $\tau$, there exists a condition which extends each of the conditions $\la p_i \colon i<\beta^*\ra$. In particular, $p^* = \bigcup_{i<\beta^*} p_i$ is a condition in $\mathbb{NR}(\alpha)$. By our construction, $p^*$ forces that $\beta^*$ is a limit point of $\dot{C}$. Let $p^{**} = p^*\cup \{ (\beta^*, 1) \}$. Then $p^{**}\in \mathbb{NR}(\alpha)$ since $\beta^*$ is regular, $p^{**}$ extends $p$, and $p^{**}\Vdash \beta^* \in \dot{C}\cap S_{\dot{G}}$.
    \end{proof}

    It will be useful that $\mathbb{NR}(\alpha)$ has arbitrarily closed dense subsets, a fact that it was also observed in \cite{ApterStrong}.

    \begin{lemma}\label{Lemma: NR_gamma is gamma directed}
      Let $\gamma<\alpha$ be a regular cardinal. Then $\mathbb{NR}_{\gamma}(\alpha)$ is a  $\gamma$-directed closed dense-open subset of $\mathbb{NR}(\alpha)$.
    \end{lemma}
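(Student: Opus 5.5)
The plan is to verify three things directly from the definitions: density, openness, and $\gamma$-directed closure of $\mathbb{NR}_\gamma(\alpha)$.

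\emph{Density and openness.} Density is immediate. Given $p\in\mathbb{NR}(\alpha)$ with $\dom(p)\le\gamma$, I extend $p$ by zeros to the domain $\gamma+1$. Padding with zeros never violates either clause of the definition. The club clause is witnessed on the new part by the zero values. The inaccessible clause only restricts where the value $1$ appears. Openness is trivial, since extension is end-extension and therefore $\dom$ can only grow.

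\emph{Directed closure.} Let $D\subseteq\mathbb{NR}_\gamma(\alpha)$ be directed with $|D|<\gamma$. Since the order is end-extension, any two elements of a directed set are comparable, so $D$ is a chain. Let $q^*=\bigcup D$, with domain $\delta=\sup_{p\in D}\dom(p)<\alpha$, using that $\alpha$ is regular. If the supremum is attained, $q^*\in D$ and we are done. Otherwise $\delta$ is a limit ordinal of cofinality less than $\gamma$, and I propose the lower bound $q=q^*\cup\{\langle\delta,0\rangle\}$. Its domain $\delta+1$ exceeds $\gamma$, so it lies in $\mathbb{NR}_\gamma(\alpha)$ as long as it is a condition.

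\emph{Checking that $q$ is a condition.} The inaccessible clause concerns only points where $q$ takes value $1$. Those points already lie in some $p\in D$, and the clause is local to $p$'s domain, so it transfers. The club clause for cardinals $\lambda<\delta$ of uncountable cofinality is inherited from whichever $p\in D$ has $\lambda$ in its domain. (One must use the right reading of "$\lambda\le\beta$" with $\dom(p)=\beta$; for $\lambda=\dom(p)$ itself, note $\lambda<\delta$ means some larger $p'$ handles it.)

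The \textbf{main obstacle} is the case where $\lambda=\delta$ is a cardinal of uncountable cofinality. Here I need a club $C\subseteq\delta$ on which $q^*$ is identically $0$. The key is that $\delta>\gamma$ and $\cf(\delta)<\gamma$, so $\delta$ is singular. I choose a club $E\subseteq\delta$ of order type $\cf(\delta)$ with $\min E>\gamma$.

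Each $\xi\in E$ is a limit ordinal of cofinality less than $\gamma$. I have two ways to make $q^*$ vanish there:
\begin{itemize}
\item[(a)] If $\xi$ is a cardinal of uncountable cofinality, apply the club clause of some $p\in D$ at $\xi$.
\item[(b)] More robustly, apply the inaccessible clause with the inaccessible $\lambda_0$ when $\gamma$ is above one. This gives that every value-$1$ point $\ge\lambda_0$ has cofinality at least $\lambda_0$.
\end{itemize}

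The plan for the general case is to modify $E$: I thin it out to its limit points of small cofinality, and glue in clubs below each point where $q^*$ is $0$. Alternatively, if this gluing becomes messy, I would first strengthen the definition of $\mathbb{NR}_\gamma(\alpha)$ to require $\dom(q)$ to be a successor ordinal with last value $0$, as in Lemma~\ref{Claim: NR is uniformly strategically closed}. In that case the points $\dom(p)-1$ for $p\in D$ form a cofinal set of zeros. Its closure adds only limit points, and each of those is either the domain of some $p\in D$ or handled by the first paragraph. This yields the required club, and hence $q\le p$ for all $p\in D$.
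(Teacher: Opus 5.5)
Your density and openness arguments, the reduction to a chain $D$, and the inheritance of both clauses below $\delta=\sup_{p\in D}\dom(p)$ are all fine. You also correctly isolate the only real issue: when $\delta$ is a cardinal of uncountable cofinality, you need a club in $\delta$ on which $q^*$ vanishes. None of your proposed resolutions supplies it.

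\begin{enumerate}
\item Route (a) misreads the club clause. At a cardinal $\xi$ it gives a club \emph{below} $\xi$ on which $p$ vanishes, not $p(\xi)=0$.
\item The gluing plan is circular. The closure of a union of clubs $C_\xi\subseteq\xi$ ($\xi\in E$) contains the points $\xi$ themselves, so you would still need $q^*$ to vanish on a club of $\xi\in E$.
\item The fallback fails on two counts. Demanding a final value $0$ destroys openness, since an end-extension may end with a $1$. Moreover, the limit points of $\{\dom(p)-1\colon p\in D\}$ are suprema of domains of subchains, never the successor domain of some $p\in D$, and nothing you established makes $q^*$ vanish there.
\end{enumerate}

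In fact no argument can close this step for an arbitrary regular $\gamma$. Let $\gamma=\omega_2$, and for $2\le\nu<\omega_1$ let $q_\nu$ have domain $\aleph_\nu+2$, with $q_\nu(\xi)=1$ iff $\xi=\aleph_\eta$ for a limit ordinal $\eta\le\nu$. The inaccessible clause is vacuous below $\aleph_{\omega_1}$. The club clause only concerns the successor cardinals $\aleph_{\eta+1}$, and below each of these the value-$1$ points are bounded. So this is an $\aleph_1$-chain in $\mathbb{NR}_{\omega_2}(\alpha)$, even with final value $0$. But its union is $1$ on the club $\{\aleph_\eta\colon \eta<\omega_1 \text{ limit}\}$ of $\aleph_{\omega_1}$, a cardinal of cofinality $\omega_1$. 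Hence no condition extends the whole chain.

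What does work is your route (b) with $\lambda_0=\gamma$, and it requires $\gamma$ to be inaccessible. Every $p\in D$ has $\gamma<\dom(p)$, so the inaccessible clause at $\gamma$ gives $q^*(\xi)=0$ whenever $\xi\ge\gamma$ and $\cf(\xi)<\gamma$. Since $\cf(\delta)\le|D|<\gamma<\delta$, take a club in $\delta$ of order type $\cf(\delta)$ lying above $\gamma$. Its limit points form a club consisting of such $\xi$, and you are done.

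This is exactly the paper's proof: it asserts that $q^*\restriction(S^\beta_{<\gamma}\setminus\gamma)$ is identically $0$ and then picks such a club. The paper calls that vanishing ``clear'' for any regular $\gamma$, but it follows from the definition only when $\gamma$ is inaccessible. More generally, the same argument gives $\lambda_0$-directed closure for every inaccessible $\lambda_0\le\gamma$. So the right repair is to add the inaccessibility hypothesis, not to change the definition of $\mathbb{NR}_\gamma(\alpha)$.
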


    \begin{proof}
        The fact that $\mathbb{NR}_\gamma(\alpha)$ is a dense open subset of $\mathbb{NR}(\alpha)$ is clear. Let us argue that $\mathbb{NR}_\gamma(\alpha)$ is $\gamma$-directed closed. Assume $\delta<\gamma$ and $\la q_i \colon i<\delta \ra$ is a directed set of conditions in $\mathbb{NR}_\gamma(\alpha)$. Let $q^* = \bigcup_{i<\delta} q_i$. We argue that $q^*\in \mathbb{NR}_{\gamma}(\alpha)$ and from this it follows that $q^*$ extends each of the conditions $q_i$ for $i<\delta$. Let $\beta= \sup_{i<\delta} \dom(q_i)$. Clearly, $q^*\colon \beta\to 2$ is a well-defined function whose restriction to $S^\beta_{< \gamma}\setminus \gamma$ is identically $0$. It remains to see that for every cardinal $\lambda\leq\beta$ of uncountable cofinality, there exists a club $C\subseteq \lambda$ such that $q^*\restriction C$ is identically $0$. This is clear if $\lambda \leq \dom(q_i)$ for some $i<\delta$. Thus, it remains to check the case where $\lambda = \beta$ and $\dom(q_i)<\lambda$ for all $i<\delta$. In particular, $\lambda>\gamma$ and $\cf(\lambda)\leq \delta$. Since $\delta<\gamma$, we can find a club  $C\subseteq \lambda$ consisting of ordinals above $\gamma$, of cofinality strictly below $\gamma$. By the definition of the forcing, $q^*\restriction C$ is identically $0$. Overall, this proves that $q^*\in \mathbb{NR}_\gamma(\alpha)$, as desired.
    \end{proof}

    \begin{notation}
        Assume that $\kappa_0 < \kappa$ and  $\kappa$ is Mahlo. Let  $\mathbb{NR}( \kappa_0, \kappa )$ be the nonstationary support iterated forcing $\la \po_\alpha, \dot{\qo}_\alpha \colon \alpha<\kappa \ra$, where for  $\alpha<\kappa$,
        \begin{enumerate}
            \item If $\alpha \leq \kappa_0$ or $\alpha$ is not measurable in $V$, $\dot{\qo}_\alpha$ is forced to be trivial.
            \item Else, $\dot{\qo}_\alpha$ is forced to be $\left(\mathbb{NR}(\alpha)\right)^{V^{\po_\alpha}}$.
        \end{enumerate}
    \end{notation}

    We summarize below the basic properties of the forcing $\mathbb{NR}(\kappa_0, \kappa)$:

    \begin{lemma}\label{lemma: properties of NR(kappa0,kappa)}
        Denote $\po = \mathbb{NR}(\kappa_0, \kappa)$. Then:
        \begin{enumerate}
            \item \label{clause: basic properties of NR 1} $\po$ is $\mu$-strategically closed, being $\mu$  the first measurable  above $\kappa_0$. 
            \item \label{clause: basic properties of NR 2} $\po$ has a $\kappa_0$-directed closed dense subset.
            \item \label{clause: basic properties of NR 3} Assuming {\rm{GCH}}, $\po$ preserves cardinals and preserves {\rm{GCH}}.
        \end{enumerate}
    \end{lemma}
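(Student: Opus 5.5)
Each of the three clauses of Lemma \ref{lemma: properties of NR(kappa0,kappa)} follows from the general machinery for nonstationary support iterations developed above, combined with Lemmas \ref{Claim: NR is uniformly strategically closed} and \ref{Lemma: NR_gamma is gamma directed}; I treat them in order.

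\emph{Clause (1).} Let $\mu$ be the first measurable cardinal above $\kappa_0$. Every nontrivial stage $\alpha$ of $\po$ is a measurable cardinal $\alpha>\kappa_0$, so $\alpha\geq\mu$. By Lemma \ref{Claim: NR is uniformly strategically closed}, applied in $V^{\po_\alpha}$ (where $\alpha$ is still Mahlo, because $\po_\alpha$ is small relative to $\alpha$ or, by induction, preserves the relevant inaccessibles), $\dot{\qo}_\alpha$ is forced to be $\alpha$-strategically closed, hence $\mu$-strategically closed. I then build a winning strategy for Player II in $G_\mu(\po)$ coordinatewise, exactly as in the proof of Claim \ref{Claim: fusion lemma, arranging clause 3}: at every coordinate in the support, Player II follows the name $\dot{\tau}_\alpha$ of a winning strategy for $\dot{\qo}_\alpha$. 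At limit stages $i<\mu$ the union of fewer than $\mu$ supports is still nonstationary in inaccessibles. Indeed, at an inaccessible $\lambda>\mu$ we intersect fewer than $\lambda$ clubs. At an inaccessible $\lambda\le\mu$ every condition has empty support below $\lambda$, since all active stages are $\geq\mu$, with the exception of the single point $\lambda=\mu$; there one uses $i<\mu$ and the diagonal intersection of the clubs. The main bookkeeping here is ensuring the local moves cohere, which is the same argument as in the fusion lemma.

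\emph{Clause (2).} Let $D$ be the set of conditions $p$ such that, for every $\alpha\in\supp(p)$, $p\restriction\alpha\Vdash p(\alpha)\in\mathbb{NR}_{\kappa_0}(\alpha)$. Density follows by extending coordinatewise, using Lemma \ref{Lemma: NR_gamma is gamma directed} and the strategic closure of the tails to handle all coordinates simultaneously, as in the fusion construction. For $\kappa_0$-directed closure, given a directed family $\{p_i\colon i<\delta\}\subseteq D$ with $\delta<\kappa_0$, I define the lower bound $q$ recursively: $q(\alpha)$ is a name for $\bigcup_i p_i(\alpha)$, which $q\restriction\alpha$ forces to be a directed family in $\mathbb{NR}_{\kappa_0}(\alpha)$. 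The support of $q$ is a union of fewer than $\kappa_0$ sets that are nonstationary in inaccessibles, and all of them miss $(\kappa_0+1)$, so $\supp(q)$ is again nonstationary in inaccessibles. The delicate point is making sure that $q\restriction\alpha$ really forces directedness of the family at coordinate $\alpha$; this holds because $q\restriction\alpha$ extends every $p_i\restriction\alpha$.

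\emph{Clause (3).} Assume GCH. The iteration is $I$-spaced, with $I$ the set of measurables in $(\kappa_0,\kappa)$; if $I$ is not stationary, $\po$ is equivalent to an initial segment and a simpler argument applies. The rank hypothesis of Lemma \ref{Lemma: fusion lemma} holds because $\mathbb{NR}(\alpha)$ has size $2^{<\alpha}=\alpha$. Strategic closure of the iterands is Lemma \ref{Claim: NR is uniformly strategically closed}. Each $\mathbb{NR}(\alpha)$ has size $\alpha$ and is $\alpha$-strategically closed in $V^{\po_\alpha}$, so it preserves cardinals and GCH there. Therefore Lemma \ref{Lemma: nonstatsupport iterations preserving GCH} gives both conclusions. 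I expect clause (1) to be the main obstacle, specifically the coordinatewise strategy argument; the other two clauses are essentially direct applications of the results already cited.
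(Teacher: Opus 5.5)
Your proposal is correct and follows the paper's own, very terse, argument: clause (1) by running the strategies of Lemma~\ref{Claim: NR is uniformly strategically closed} coordinatewise, clause (2) via the same dense set $D$ together with Lemma~\ref{Lemma: NR_gamma is gamma directed}, and clause (3) by Lemma~\ref{Lemma: nonstatsupport iterations preserving GCH}, with you filling in details the paper leaves implicit. Two side remarks are inaccurate but harmless. First, $\po_\alpha$ is not small relative to $\alpha$ when $\alpha$ is a limit of measurables; the strategic-closure argument only needs $\alpha$ to remain regular in $V^{\po_\alpha}$, which holds because the tails are highly strategically closed and the initial segments are small. Second, an unbounded nonstationary $I$ does not reduce $\po$ to an initial segment; instead, regard $\po$ as spaced on the stationary set of all inaccessibles in $(\kappa_0,\kappa)$, so that Lemma~\ref{Lemma: nonstatsupport iterations preserving GCH} applies directly.
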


    \begin{proof}
        Clause \eqref{clause: basic properties of NR 1} follows, by an easy argument,  from Lemma \ref{Claim: NR is uniformly strategically closed}. 
        Clause \eqref{clause: basic properties of NR 2} follows from the fact that 
         $$D = \{ p\in \mathbb{NR}(\kappa_0, \kappa) \colon \forall \alpha\in \supp(p) \left( p\restriction \alpha \Vdash p(\alpha)\in \dot{\mathbb{NR}}_{\kappa_0}(\alpha) \right) \}$$
         is $\gamma$-directed closed by Lemma \ref{Lemma: NR_gamma is gamma directed}.\footnote{Note that $D$ is not open since coordinates can be added to the support when extending a condition.} Clause \eqref{clause: basic properties of NR 3} follows from Lemma \ref{Lemma: nonstatsupport iterations preserving GCH}.
    \end{proof}

    The following technical lemma will be useful in the proof of Theorem \ref{thm: Strong compactness after iterating nonreflecting stat sets}. It provides an example of a fusion argument, similar to that of Lemma \ref{Lemma: fusion lemma}, in which the dense sets $\langle d(\alpha) \colon \alpha<\kappa\rangle$ are not necessarily open.

    \begin{lemma}\label{lem: technical lemma for the proof of strong compactness}
        Denote $\po = \mathbb{NR}(\kappa_0, \kappa)$. Let $D\subseteq \po$ be the set of conditions $p\in \po$ for which there exists a club $C\subseteq \kappa$ such that for every $\alpha\in C$ and for every $\beta\in \supp(p)\setminus (\alpha+1)$,
        $$ p\restriction \beta \Vdash p(\beta) \in \dot{\mathbb{NR}}_{\alpha^{++}}(\beta). $$
        Then $D$ is a dense subset of $\po$.
    \end{lemma}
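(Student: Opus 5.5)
Given $p\in\po$, I will build $p^*\le p$ in $D$ by a fusion construction of length $\kappa$, modelled on the proof of Lemma \ref{Lemma: fusion lemma}, but with no dense open sets to meet. The only requirement is that beyond the $i$-th fusion point, every coordinate in the support lies in the dense set $\dot{\mathbb{NR}}_{\alpha_i^{++}}(\beta)$. The key observation is that for $\beta>\gamma$ the set $\mathbb{NR}_\gamma(\beta)$ is dense open in $\mathbb{NR}(\beta)$ (Lemma \ref{Lemma: NR_gamma is gamma directed}). Hence any condition $q$ can be extended coordinatewise: at each $\beta\in\supp(q)$ with $\beta>\gamma$, we lengthen $q(\beta)$ so that its domain exceeds $\gamma$. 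This extension does not enlarge the support, so the result is still a condition. Moreover, extending a condition that already lies in $\mathbb{NR}_\gamma(\beta)$ stays in $\mathbb{NR}_\gamma(\beta)$, and $\mathbb{NR}_{\gamma'}(\beta)\subseteq\mathbb{NR}_\gamma(\beta)$ for $\gamma<\gamma'$. These monotonicity properties are what allow the construction to survive limit stages.

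Concretely, I will construct a decreasing sequence $\la p_i\colon i<\kappa\ra$, a continuous increasing sequence $\la\alpha_i\colon i<\kappa\ra$ cofinal in $\kappa$, and decreasing clubs $C_i$ disjoint from $\supp(p_i)$. They will satisfy the following conditions:
\begin{enumerate}
\item $p_j\restriction(\alpha_i+1)=p_i\restriction(\alpha_i+1)$ for $i<j$;
\item $\alpha_{i+1}=\min(C_i\setminus(\alpha_i+1))$;
\item for every $\beta\in\supp(p_{i+1})\setminus(\alpha_{i+1}+1)$, $p_{i+1}\restriction\beta\Vdash p_{i+1}(\beta)\in\dot{\mathbb{NR}}_{\alpha_{i+1}^{++}}(\beta)$.
\end{enumerate}
At successor steps I take $p_{i+1}$ to agree with $p_i$ up to $\alpha_{i+1}+1$. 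Above that point, coordinate by coordinate (a recursion on $\beta$ producing names), I extend $p_i(\beta)$ to a condition of domain $>\alpha_{i+1}^{++}$, for instance by padding with zeros, which keeps the condition legitimate. The support is unchanged, so we may take $C_{i+1}=C_i$. At limit steps $i$, I take the union below $\alpha_i=\sup_{j<i}\alpha_j$. Since $\alpha_i\in\bigcap_{j<i}C_j$, the coordinate $\alpha_i$ is not in any support. Above $\alpha_i$, I need a lower bound of $\la p_j(\beta)\colon j<i\ra$ for each $\beta$; I then pad it to a domain exceeding $\alpha_i^{++}$.

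The main obstacle is exactly this limit step. Lower bounds of length-$i$ sequences in $\mathbb{NR}(\beta)$ require strategic closure, so, as in the proof of Lemma \ref{Lemma: fusion lemma}, I will interleave the construction with runs of Player II's strategy $\dot\tau_\beta$ from Lemma \ref{Claim: NR is uniformly strategically closed}. In this scheme, each $p_j(\beta)$ is Player II's response to our padded extension played by Player I. Since $i<\alpha_i<\beta$, the strategy supplies the bound. Alternatively, I can use that $\mathbb{NR}_{\alpha_{j}^{++}}(\beta)$ is $\alpha_j^{++}$-directed closed (Lemma \ref{Lemma: NR_gamma is gamma directed}): if $i<\alpha_{j_0}^{++}$ for some $j_0<i$, a tail of the sequence lies in a sufficiently directed-closed set and its union is a condition. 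That always holds for limit $i$ unless $i=\alpha_i$, which is the case where the strategic run is really needed. I will use the strategy to be safe.

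Supports stay nonstationary at inaccessibles by the same $\lambda$-c.c. argument given in Lemma \ref{Lemma: fusion lemma}. Finally, I set $p^*=\bigcup_i p_i\restriction(\alpha_i+1)$ and $C=\{\alpha_i\colon i<\kappa\}$. Given $\alpha=\alpha_i\in C$ and $\beta\in\supp(p^*)$ with $\beta>\alpha$, choose $j>i$ with $\beta\le\alpha_j$. Then $p^*(\beta)=p_j(\beta)$, which lies in $\mathbb{NR}_{\alpha_{k}^{++}}(\beta)$ for the stage $k\ge i$ at which $\beta$ was last treated. By monotonicity this condition lies in $\mathbb{NR}_{\alpha^{++}}(\beta)$, so $p^*\in D$.
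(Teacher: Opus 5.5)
Your proposal is correct and is essentially the paper's proof: the same length-$\kappa$ fusion that keeps $\supp(p_i)=\supp(p)$, picks the $\alpha_i$ from a club avoiding the support, pads every coordinate $\beta>\alpha_i$ into $\mathbb{NR}_{\alpha_i^{++}}(\beta)$, and takes $\{\alpha_i : i<\kappa\}$ as the witnessing club. The only difference is that you get limit-stage lower bounds from Player II's winning strategy, whereas the paper hand-codes that strategy by maintaining $\dom(p_i(\beta))=\gamma_i(\beta)+1$ with $p_i(\beta)(\gamma_i(\beta))=0$, so that the coordinatewise unions are already conditions; you also need one small repair, which the paper's write-up needs as well: coordinates $\beta\in\supp(p)\cap(\alpha_0,\alpha_1)$ are never padded (they keep $p^*(\beta)=p(\beta)$), so either pad $p_0$ too or drop $\alpha_0$ from the witnessing club.
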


    \begin{proof}
        The proof is a modified and simplified version of Lemma \ref{Lemma: fusion lemma}.\footnote{A naive attempt to deduce it directly from Lemma \ref{Lemma: fusion lemma} would be to apply the lemma to the dense sets $d(\alpha)\subseteq \po$ consisting of conditions $p$ such that for every $\beta\in \supp(p)\setminus (\alpha+1)$, $p\restriction \beta \Vdash p(\beta)\in \dot{\mathbb{NR}}_{\alpha^{++}}(\beta)$. However, these sets $d(\alpha)$ are not open.}
        Fix a condition $p\in \po$. Let $C\subseteq \kappa$ be a club in $\kappa$ disjoint from $\supp(p)$. Let us construct sequences:
        \begin{itemize}
            \item $\la p_i \colon i<\kappa \ra$ a decreasing sequence of conditions in $\po$.
            \item $\la \alpha_i \colon i<\kappa \ra$ a continuous, increasing cofinal sequence in $\kappa$, such that each $\alpha_i$ belongs to $C$.
        \end{itemize}
The construction is done in such a way that the following properties hold:
    \begin{itemize}
        \item For every $i<\kappa$, $\supp(p_i) = \supp(p)$.
        \item For every $i<j<\kappa$, $p_{i}\restriction \alpha_i+1 = p_j \restriction \alpha_i+1$.
        \item For every $i<\kappa$ and $\beta\in \supp(p)\setminus (\alpha_{i}+1)$, $p_i\restriction \beta$ forces that:
        \begin{itemize}
            \item $p_i(\beta)\in \dot{\mathbb{NR}}_{(\alpha_{i})^{++}}(\beta)$.
            \item there exists $\gamma_i(\beta) \in (\alpha_i, \beta)$ such that $\dom(p_{i}(\beta) )= \gamma_i(\beta)+1$ and $p_{i}(\beta)(\gamma_i(\beta)) = 0$.
        \end{itemize}
        
    \end{itemize}
    
    Start the construction by letting $p_0 = p$, $\alpha_0 = \min(C)$. Assume that $p_i, \alpha_i$ have been constructed for some $i<\kappa$. Let $\alpha_{i+1} = \min(C \setminus \alpha_i+1)$. It's not hard to construct $p_{i+1}\leq p_i$ such that:
    \begin{itemize}
        \item $\supp(p_{i+1})= \supp(p)$.
        \item $p_{i+1}\restriction (\alpha_{i+1}+1)= p_i \restriction (\alpha_{i+1}+1)$.
        \item For every $\beta\in \supp(p)\setminus (\alpha_{i+1}+1)$, $p_{i+1}\restriction \beta$ forces that:
        \begin{itemize}
            \item $p_{i+1}(\beta)\in \dot{\mathbb{NR}}_{(\alpha_{i+1})^{++}}(\beta).$
            \item There exists $\gamma_{i+1}(\beta)\in (\alpha_{i+1}, \beta)$ such that $\dom(p_{i+1}(\beta)) = \gamma_{i+1}(\beta)+1$, and $p_{i+1}(\beta)(\gamma_{i+1}(\beta)) =0$. 
        \end{itemize}
    \end{itemize}
    Next, consider the limit case. Suppose that $i<\kappa$ and $p_j, \alpha_j$ were constructed for every $j<i$. Let $\alpha_i = \sup_{j<i}\alpha_j$. Define $p_i$ such that:
    \begin{itemize}
        \item $\supp(p_i)= \supp(p)$.
        \item $p_{i}\restriction \alpha_i = \bigcup_{j<i} p_j\restriction \alpha_j$.
        \item $p_{i}\restriction (\alpha_{i}+1)$ forces that $p_{i}\setminus (\alpha_i+1)$ extends all the conditions $\la p_j\setminus (\alpha_i+1) \colon j<i \ra$. 
        \item For every $\beta\in \supp(p)\setminus (\alpha_i+1)$, $p_{i}\restriction \beta$ forces that:
        \begin{itemize}
            \item $p_i(\beta)\in \dot{\mathbb{NR}}_{(\alpha_i)^{++}}(\beta).$
            \item There exists an ordinal $\gamma_i(\beta)\in (\alpha_i, \beta)$ such that $\dom(p_i(\beta)) = \gamma_i(\beta)+1$ and $p_i(\beta)(\gamma_i(\beta)) = 0$.
        \end{itemize}
    \end{itemize}
    Constructing $p_i$ for $i$ limit is not trivial as in the successor step, so let us justify why such a condition $p_i\in \po$ can be found. Note that $p_{i}\restriction \alpha_i$ is already fully determined, and $\alpha_i$ itself is outside the support of $p_i$. Thus, we concentrate on constructing $p_i\setminus (\alpha_i+1)$. Assume that $\beta\in \supp(p)\setminus (\alpha_i+1)$ and $p_i\restriction \beta$ has already been constructed. Work in a generic extension $V^{\po_\beta}$ that includes $p_i\restriction \beta$ in the generic, and let us construct $p_i(\beta)$ there. Note first that the sequence of conditions $\la p_j(\beta) \colon j<i \ra$ has a natural lower bound in $\mathbb{NR}(\beta)$, $$x = \bigcup_{j<i}p_j(\beta).$$
    We argue that $x$ is a legitimate condition in $\mathbb{NR}(\beta)$. Denote $\gamma^*:=\dom(x) = \sup_{j<i} \gamma_i(\beta)$. It suffices to prove that, for every regular cardinal $\lambda \leq \gamma^*$ of uncountable cofinality, there exists a club $D\subseteq \lambda$ such that $x\restriction D$ is identically $0$. Indeed, if $\lambda< \gamma^*$ this is clear, and if $\gamma^*$ itself is regular and $\lambda = \gamma^*$, then $\la \gamma_j(\beta) \colon j<i \ra$ is a club in $\gamma^*$ on which $x$ vanishes. 
    
    Overall, $x\in \mathbb{NR}(\beta)$, and by further extending $x$ inside $\mathbb{NR}(\beta)$, we can ensure that $x\in \mathbb{NR}_{(\alpha_i)^{++}}(\beta)$ and $\dom(x) = \gamma_i(\beta)+1$ for some $\gamma_i(\beta)\in (\alpha_i, \beta)$ with $p_i(\beta)(\gamma_i(\beta)) = 0$. Let $p(\beta)$ be forced by $p\restriction \beta$ to be the condition $x$ constructed above.
    
    This concludes the inductive construction. Finally, let 
    $$q = \bigcup_{i<\kappa} p_i\restriction \alpha_i \in \po.$$
    Note that $\supp(q) = \supp(p)$ and $q$ extends each of the conditions $\la p_i \colon i<\kappa \ra$. Furthermore, for every $i<\kappa$ and $\beta\in \supp(q)\setminus (\alpha_i+1)$, 
    $$ q\restriction \beta \Vdash q(\beta) \leq p_i(\beta) \in \mathbb{NR}_{(\alpha_{i})^{++}}(\beta).$$
    Thus, $q$ is as desired, as witnessed by the club $\{ \alpha_i \colon i<\kappa \}$.
    \end{proof}

    \begin{theorem}\label{thm: Strong compactness after iterating nonreflecting stat sets} 
        Assume {\rm{GCH}}. Assume that $\kappa$ is a supercompact cardinal, $\kappa_0<\kappa$, and let $\po = \mathbb{NR}(\kappa_0, \kappa)$. Suppose that $\lambda > \kappa$ is a measurable cardinal, and there are no measurable cardinals in the interval $(\kappa, \lambda)$. Then:
        \begin{enumerate}
            \item \label{clause: NR sometimes make supercompact into strongly compact 1} $\kappa$ is $\lambda$-strongly compact after forcing with $\mathbb{NR}(\kappa_0, \kappa)$.
            \item  \label{clause: NR sometimes make supercompact into strongly compact 1.5} There are no measurable cardianls in  $(\kappa_0, \kappa)$ after forcing with $\mathbb{NR}(\kappa_0, \kappa)$
            \item \label{clause: NR sometimes make supercompact into strongly compact 2} If $\lambda$ is strongly compact, then $\kappa$ remains so after forcing with $\mathbb{NR}(\kappa_0, \kappa)$.
        \end{enumerate}
    \end{theorem}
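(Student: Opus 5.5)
The plan for clause \eqref{clause: NR sometimes make supercompact into strongly compact 1} is the Kimchi--Magidor strategy. We produce an embedding $j\colon V\to M$ with $\crit(j)=\kappa$ such that $\kappa$ is not measurable in $M$, so the stage $\kappa$ of $j(\po)$ is trivial, and such that $j[\lambda]$ is covered by a set in $M$ of $M$-size $<j(\kappa)$. Then we lift $j$ to $V[G]$ without any master condition at stage $\kappa$.

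Concretely, let $j_1\colon V\to M_1$ be the ultrapower by a fine normal measure on $\mathcal{P}_\kappa(\lambda)$. In $M_1$, pick a normal measure $U$ on $\kappa$ of Mitchell order $0$ (with $U\in V$ as well), and let $j_2=j^{M_1}_U\colon M_1\to M$. Set $j=j_2\circ j_1$. Then:
\begin{itemize}
\item $\crit(j)=\kappa$, and $\kappa$ is not measurable in $M$, so $j(\po)_{\kappa+1}=\po$;
\item $j_2(j_1[\lambda])\in M$ covers $j[\lambda]$ and has $M$-size $\lambda<j(\kappa)$, so any lift of $j$ witnesses $\lambda$-strong compactness.
\end{itemize}
Since $M$ is the ultrapower of $M_1$ by a measure lying in $M_1$, $M$ is closed under $\kappa$-sequences in $V$. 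Since $\lambda$ is the first measurable above $\kappa$, the $M$-measurables in $(\kappa,j(\kappa))$ are analysable via $j_1$ and $j_2$.

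To lift, we first build in $V[G]$ a generic $H$ for $j(\po)\setminus(\kappa+1)$ over $M[G]$. The approach is the usual counting argument. Under GCH, $j(\po)\setminus(\kappa+1)$ has few enough maximal antichains in $M[G]$ as counted in $V[G]$ (about $\kappa^+$ many, using $j(\kappa)<(\lambda^{+})$-bounded representation through the ultrapower). It is also sufficiently strategically closed in $M[G]$ by Lemma~\ref{lemma: properties of NR(kappa0,kappa)}\eqref{clause: basic properties of NR 1} applied to the first $M$-measurable above $\kappa$. Finally, $M[G]$ is closed under $\kappa$-sequences in $V[G]$ by the argument of Claim~\ref{Claim: closure under kappa sequneces and fusion}.

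The crux, and the main obstacle, is ensuring $j[G]\subseteq G*H$, i.e.\ finding a lower bound for $\{j(p)\setminus(\kappa+1):p\in G\}$ inside $j(\po)\setminus(\kappa+1)$ before building $H$ below it. Here I would invoke Lemma~\ref{lem: technical lemma for the proof of strong compactness}, as follows:
\begin{enumerate}
\item By density, $G$ is generated by conditions $p\in D$, each with a club $C_p$ such that $p(\beta)\in\dot{\mathbb{NR}}_{\alpha^{++}}(\beta)$ for all $\alpha\in C_p$ and $\beta>\alpha$.
\item Shrinking $C_p$ to avoid $\supp(p)$, we get $\kappa\in j(C_p)$.
\item Hence every coordinate of $j(p)$ above $\kappa$ lies in the $\kappa^{++}$-directed closed dense set $\mathbb{NR}_{\kappa^{++}}(\beta)$ of Lemma~\ref{Lemma: NR_gamma is gamma directed}.
\end{enumerate}
The family $j[G]$ has size $\kappa^+$, is directed, and belongs to $M[G]$ because $j\restriction\po\in M$. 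So a coordinatewise union yields a master condition $q$; one must check that its support is still nonstationary in $M$. With $H\ni q$ built as above, Silver's criterion gives $j^*\colon V[G]\to M[G*H]$ witnessing $\lambda$-strong compactness.

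For clause \eqref{clause: NR sometimes make supercompact into strongly compact 1.5}, fix a $V$-measurable $\alpha\in(\kappa_0,\kappa)$ and factor $\po=\po_\alpha*\dot{\mathbb{NR}}(\alpha)*\po\setminus(\alpha+1)$. By Lemma~\ref{lemma: NR adds a nonreflecting stationary set}, $\alpha$ carries a nonreflecting stationary set after stage $\alpha$, so $\alpha$ is not weakly compact. The tail adds no subsets of $\alpha$ by strategic closure, so this persists. Any $V[G]$-measurable in $(\kappa_0,\kappa)$ was already measurable in $V$: use the gap at a small successor cardinal together with Theorem~\ref{thm: Gap forcing theorem}, where $\mu$ from Lemma~\ref{lemma: properties of NR(kappa0,kappa)} handles the region below the first forcing stage. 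So no such cardinal remains measurable.

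For clause \eqref{clause: NR sometimes make supercompact into strongly compact 2}, fix any regular $\delta\geq\lambda$. I would run the same argument with $j_1$ replaced by the composition of a $\delta$-strong compactness embedding for $\lambda$ with a $\lambda$-supercompactness embedding for $\kappa$, again followed by $j_2$ from an order-$0$ measure. This produces covers of $j[\delta]$ of size $<j(\kappa)$, and the same master-condition and counting argument applies.
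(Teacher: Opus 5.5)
\textbf{There is a genuine gap in your clause (1), and clause (3) inherits it.} Your argument for clause (2) is fine and matches the paper.

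\emph{The counting fails.} Since $j(\kappa)>\lambda^{+}=\lambda^{+M}$, the forcing $j(\po)\setminus(\kappa+1)$ has at least $\lambda^+$ dense sets in $M[G]$, as counted in $V[G]$, not about $\kappa^+$. But $M[G]$ is only closed under $\kappa$-sequences in $V[G]$, so you have roughly $\kappa^+$-strategic closure there. A single diagonalization in $V[G]$ therefore cannot produce $H$.

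\emph{Stage $\lambda$ is the real obstruction.} Since $M_1$ is closed under $\kappa$-sequences and $U\in M_1$, we have $j^{M_1}_U=j_U\restriction M_1$. Hence
\[ j=j_U\circ j_1=j_U(j_1)\circ j_U=j^{M_U}_{j_U(W)}\circ j_U . \]
This is exactly the embedding of Theorem~\ref{thm: Strong compactness after iterating nonreflecting stat sets when there are no measurable cardinals above}, which the paper uses only when $\lambda$ is \emph{not} measurable. Here, since $j_U(\lambda)=\lambda$, $M\models$ ``$\lambda$ is measurable'' iff $M_1\models$ ``$\lambda$ is measurable''. This holds, for example, when $W$ is the projection of a normal measure on $\mathcal{P}_\kappa(\lambda^+)$. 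In that case stage $\lambda$ of $j(\po)$ is $\mathbb{NR}(\lambda)$, and $\lambda$ may lie in $\supp(j(p))$. Neither candidate model can supply a generic for it:
\begin{itemize}
\item $M_U$ and $M$, and their extensions by the small generic below stage $\lambda$, have the same subsets of $\lambda$. So $M_U[\ldots]$ contains no $\mathbb{NR}(\lambda)$-generic over $M[\ldots]$.
\item $V[G]$ has far too little closure to meet $\lambda^+$ dense sets.
\end{itemize}

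\emph{The master condition is unjustified.} You use $j\restriction\po\in M$, but this is false. Combined with $j_U(j_1)\restriction j_U(\po)\in M$, it would put $j_U\restriction\po$, and hence $j_U[\kappa^+]$, into $M_U$.

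\textbf{The missing idea} is to interpose an ultrapower by a Mitchell-order-$0$ normal measure $U_1$ on $\lambda$, and to build the generics in stages inside intermediate models. The paper takes $j=i\circ j_1$ with $j_1=j^{M_0}_{j_{U_0}(U_1)}\circ j_{U_0}$ and $i=j^{M_1}_{j_1(W)}$. Then $\lambda$ is not measurable in $M_1$, the troublesome stage moves to $\lambda^*=j_1(\lambda)$, and
\[ j(\po)=j(\po)_{\kappa^*}*\mathbb{NR}(\kappa^*)*\mathbb{NR}(\lambda^*)*\mathbb{NR}(\lambda^*,j(\kappa)). \]
Each factor is handled in its own model:
\begin{itemize}
\item The first factor is generated by $j_{U_0}[G]$ via the fusion Lemma~\ref{Lemma: lifting elementary embeddings with fusion}, with no counting and no master condition.
\item $h_0$ is built in $V[G]$. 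It needs only $\kappa^+$ dense sets and no master condition, because $\kappa^*\notin j(\supp(p))$.
\item $h_1$ is built in $M_0[H*h_0]$. There $M_1$ and $M$ are $\lambda$-closed, and $\mathbb{NR}(\lambda^*)$ has only $\lambda^{+M_0}$ dense sets.
\item $H^*$ is built in $M_1[H*h_0*h_1]$.
\end{itemize}
The master conditions $t$ and $s$ are lower bounds of $i[H\cap D]$ at $\lambda^*$ and above. They exist in $M[H*h_0]$ because $i\restriction j_1(\po)\in M$. Moreover, by Lemma~\ref{lem: technical lemma for the proof of strong compactness} together with $\kappa^*\in i(C)$, those coordinates lie in the $(\kappa^*)^{++}$-directed closed sets $\mathbb{NR}_{(\kappa^*)^{++}}(\beta)$.

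Your order of composition could be repaired by choosing $W$ so that $\lambda$ is not measurable in $M_W$. Such a $W$ can be derived from $j^{M_D}_{j_D(E)}\circ j_D$ with $D$ an order-$0$ measure on $\lambda$; you would then build the tail generic in $M_U[H*h_0]$. But this is the same trick.

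\textbf{For clause (3),} the paper needs no new embedding. It uses Ketonen's characterization (Theorem~\ref{thm: Ketonen's charachterization for strong compactness}). Clause (1) handles regular $\mu\in[\kappa,\lambda]$. For regular $\mu>\lambda$, a $\lambda$-complete uniform ultrafilter from $V$ generates one in $V[G]$, because $|\po|=\kappa^+<\lambda$.
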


    For the proof of clause \eqref{clause: NR sometimes make supercompact into strongly compact 2} in Theorem \ref{thm: Strong compactness after iterating nonreflecting stat sets}, we will use Ketonen's characterization of strongly compact cardinals, which reads as follows:

    \begin{theorem}[{Ketonen \cite{Ketonen}; Usuba \cite[Lemmas 2.2 and 2.12]{usuba2021note}}]\label{thm: Ketonen's charachterization for strong compactness}
     Let $\kappa$ be a regular uncountable cardinal, and let $\lambda> \kappa$ be an inaccessible cardinal. Then the following are equivalent:
        \begin{enumerate}
            \item $\kappa$ is $\lambda$-strongly compact.
            \item For every regular cardinal $\mu \in [\kappa, \lambda]$, there exists a $\kappa$-complete uniform ultrafilter\footnote{An ultrafilter $U$ on a regular cardinal $\mu$ is uniform if every set in $U$ has size $\mu$.} on $\mu$.
        \end{enumerate}
    \end{theorem}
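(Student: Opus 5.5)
We prove the two implications separately. The direction $(1)\Rightarrow(2)$ is soft. The direction $(2)\Rightarrow(1)$ is Ketonen's, and we would organise it as an induction on regular cardinals $\mu\in[\kappa,\lambda]$ using the covering formulation of strong compactness. Throughout we use the standard equivalence: $\kappa$ is $\lambda$-strongly compact iff there are a definable elementary embedding $j\colon V\to M$ with $\crit(j)\geq\kappa$ and a set $s\in M$ such that $j[\lambda]\subseteq s$ and $|s|^M<j(\kappa)$. Given such $j,s$, the family $\{X\subseteq\mathcal{P}_\kappa(\lambda)\colon s\cap j(\lambda)\in j(X)\}$ is a fine $\kappa$-complete ultrafilter; conversely, the ultrapower by a fine $\kappa$-complete ultrafilter provides $j$ and $s=[\mathrm{id}]$.

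\textbf{$(1)\Rightarrow(2)$.} Fix a regular $\mu\in[\kappa,\lambda]$ and a fine $\kappa$-complete ultrafilter $\mathcal{U}$ on $\mathcal{P}_\kappa(\lambda)$. Project $\mathcal{U}$ along $X\mapsto \sup(X\cap\mu)$. Since $|X|<\kappa\leq\mu$ and $\mu$ is regular, this map takes values in $\mu$. The pushforward $D$ is a $\kappa$-complete ultrafilter on $\mu$. It is uniform: for each $\alpha<\mu$, fineness gives $\{X\colon \alpha\in X\}\in\mathcal{U}$, so $\mu\setminus\alpha\in D$. Every bounded subset of the regular cardinal $\mu$ therefore has $D$-measure $0$.

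\textbf{$(2)\Rightarrow(1)$.} We would prove by induction on regular $\mu\in[\kappa,\lambda]$ the statement $(\ast)_\mu$: there are $j\colon V\to M$ with $\crit(j)\geq\kappa$ and $s\in M$ with $j[\mu]\subseteq s$ and $|s|^M<j(\kappa)$.
\begin{enumerate}
\item Since $\lambda$ is inaccessible and hence regular, $(\ast)_\lambda$ is exactly (1).
\item Singular $\delta\in(\kappa,\lambda)$ never need separate treatment. If $\cf(\delta)<\kappa$, take a union of fewer than $\kappa$ covers, formed inside an iterated ultrapower of the relevant embeddings. If $\cf(\delta)\geq\kappa$, combine a cover for $\cf(\delta)$ with covers of the pieces. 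In either case only the regular stages matter.
\item Base case $\mu=\kappa$: the uniform $\kappa$-complete ultrafilter on $\kappa$ gives $j$ with $\crit(j)=\kappa$, and we take $s=\kappa\cup\{\kappa\}$.
\end{enumerate}

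For the successor-type step, fix regular $\mu>\kappa$ and a uniform $\kappa$-complete $W$ on $\mu$ with ultrapower $i\colon V\to N$. By uniformity, $\gamma:=[\mathrm{id}]_W$ lies above every element of $i[\mu]$ and below $i(\mu)$. Hence $\sup i[\mu]\leq\gamma<i(\mu)$, and $i[\mu]$ is bounded in $i(\mu)$. Let $\nu=\cf^N(\sup i[\mu])$, a regular cardinal of $N$ below $i(\mu)$. Inside $N$, apply the inductive hypothesis $i((\ast)_{<\mu})$ to $\nu$; if $\nu<i(\kappa)$, use the trivial cover instead. This yields $k\colon N\to P$ and a cover of $k[c]$ of $P$-size $<k(i(\kappa))$, where $c$ is a club in $\sup i[\mu]$ of order type $\nu$.

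Next we must cover all of $k\circ i[\mu]$, not only a cofinal subset. For each $\xi\in c$ the set $i[\mu]\cap\xi$ equals $i[\alpha]\cap\xi$ for some $\alpha<\mu$, and it is covered by $i(F)(\xi)$, where $F(\beta)$ is the $V$-cover of $\beta$ supplied by the hypothesis below $\mu$. Taking the union over the covered copy of $c$ in $P$ gives a set of size $<k(i(\kappa))$, because $k(i(\kappa))$ is regular in $P$ and is greater than both the number of pieces and the size of each piece. The embedding $k\circ i$ then witnesses $(\ast)_\mu$.

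\textbf{Main obstacle.} The crux is this last amalgamation. The covers of the pieces $i[\mu]\cap\xi$ must be chosen uniformly, by a single function $F$ in $V$, so that $k$ can move them coherently. We must also check that the resulting sizes stay below $k(i(\kappa))$. This is where Ketonen's argument, as refined in Usuba's Lemmas 2.2 and 2.12, does the real work. We would first try choosing $F(\beta)$ as the canonical cover obtained from the least witnessing ultrafilter in a fixed well-order. We would then verify the size bound using the regularity of $k(i(\kappa))$ in $P$.
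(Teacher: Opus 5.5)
The paper gives no proof of this statement; it is quoted from Ketonen and Usuba, so I compare your sketch with the standard argument. Your direction (1)$\Rightarrow$(2) is correct. In (2)$\Rightarrow$(1) your setup is also correct: the discontinuity $\sup i[\mu]<i(\mu)$, the choice of $\nu=\cf^N(\sup i[\mu])$, and the use of the inductive hypothesis inside $N$ to obtain $k\colon N\to P$ with a cover of $k[c]$ of $P$-size $<k(i(\kappa))$.

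\textbf{The gap is the amalgamation step, and the problem is existence, not size.} There is no function $F\in V$ with $F(\beta)$ a cover of $\beta$ of size $<\kappa$, since for $\beta\geq\kappa$ no such set exists. Suppose instead $F(\beta)$ is a witnessing ultrafilter. Then $i(F)(\xi)$ determines its own ultrapower of $N$, not $k$, so its seed tells you nothing about $k[i[\mu]\cap\xi]$. The covers supplied by the inductive hypothesis live in the targets of \emph{other} embeddings and cannot be moved into the fixed embedding $k\circ i$. Your size count via regularity of $k(i(\kappa))$ would be fine if such an $F$ existed.

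A concrete failure: let $\mu$ be measurable and $W$ normal. Then $i$ is the identity below $\mu$ and $i(\kappa)=\kappa$. For $\xi\in c$ with $\xi\geq\kappa$, the set $i(F)(\xi)=F(\xi)$ would have to contain $\xi$ while having size $<\kappa$. Choosing a least ultrafilter in a well-order does not address this.

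\textbf{The missing idea is that no amalgamation is needed.} Put $\delta=\sup i[\mu]$.
\begin{enumerate}
\item Since $W$ is countably complete, $i$ is continuous at ordinals of cofinality $\omega$, so $i[\mu]$ is $\omega$-closed in $\delta$.
\item The set $c$ is closed in $N$, and closedness is absolute, so $c$ is closed in $V$.
\item Both sets are cofinal in $\delta$, and $\delta$ has $V$-cofinality $\mu>\omega$. Interleaving an $\omega$-sequence from each shows they meet unboundedly often.
\item Hence $X=\{\alpha<\mu\colon i(\alpha)\in c\}$ is unbounded in $\mu$, so $|X|=\mu$.
\item Let $\ell=k\circ i$ and let $t\in P$ cover $k[c]$. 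Then $\ell[X]\subseteq t$.
\item Fix a bijection $g\colon X\to\mu$ in $V$. The set $\ell(g)[t\cap\ell(X)]\in P$ covers $\ell[\mu]$ and has $P$-size $<\ell(\kappa)$.
\end{enumerate}
This yields $(\ast)_\mu$ from $(\ast)_{\mu'}$ for regular $\mu'<\mu$ alone. Your remarks on singular cardinals then become unnecessary.

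That is just as well, because the case $\cf(\delta)<\kappa$ as you sketched it is doubtful. In an iterated ultrapower of length $\cf(\delta)$, the sequence of covers need not belong to the direct limit model, just as the critical sequence of an $\omega$-iteration does not.
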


\begin{proof}[Proof of Theorem \ref{thm: Strong compactness after iterating nonreflecting stat sets}]

We begin by proving clause \eqref{clause: NR sometimes make supercompact into strongly compact 1}. 
Let $W$ be a fine, normal measure on $\mathcal{P}_\kappa(\lambda)$. 
Fix normal measures,  $U_0$ on $\kappa$ and $U_1$ on $\lambda$, both of Mitchell order $0$.

\smallskip

Perform the iterated ultrapower associated with the measure $U_0$, followed by the image of $U_1$, followed by the image of $W$. More formally, let:
\begin{itemize}
    \item $j_{U_0}\colon V\rightarrow M_0\simeq \Ult(V,U_0)$ be the ultrapower embedding associated with $U_0$. Denote $\kappa^* = j_{U_0}(\kappa)$. 
    \item $j^{M_0}_{j_{U_0}(U_1)}\colon M_U \rightarrow M_1 \simeq \Ult( M_0, j_{U_0}(U_1) )$ be the ultrapower embedding given by $j_{U_0}(U_1)$ over $M_0$.\footnote{
     $\lambda = j_{U_0}(\lambda)$ as $\lambda> \kappa$ is inaccessible. Thus $j_{U_0}(U_1)$ is a measure on $\lambda$ in $M_{0}$.} Denote $\lambda^* = j^{M_0}_{j_{U_0}(U_1)}(\lambda)$. 
    \item $j_{1} = j^{M_0}_{j_{U_0}(U_1)} \circ j_{U_0}\colon V\to M_{1}$.
    \item $i = j^{M_1}_{j_1(W)}\colon M_1\rightarrow M\simeq \Ult(M_1, j_1(W))$ the ultrapower embedding associated with $j_1(W)$ over $M_1$. 
    \item $j = i\circ j_1 \colon V\rightarrow M$.
\end{itemize}
Our goal is to show that $j$ lifts, after forcing with $\mathbb{P}:=\mathbb{NR}(\kappa_0, \kappa)$, to an elementary embedding that witnesses $\lambda$-strong compactness of $\kappa$ in $V[G]$, where $G\s \mathbb{P}$ is $V$-generic. Since there are no measurable cardinals in the interval $(\kappa, \lambda)$ in $V$, the forcing $j(\po)$ factors as follows in $M$:
$$ j(\po) = {j(\po)_{\kappa^*}} \ast {\mathbb{NR}(\kappa^*)} \ast {\mathbb{NR}(\lambda^*)} \ast {\mathbb{NR}(\lambda^*, j(\kappa))}.$$
We will construct generic objects for each of the factors.

Start with $j(\po)_{\kappa^*}$. By Lemma~\ref{Lemma: lifting elementary embeddings with fusion}, $j_{U_0}\colon V\rightarrow M_0$ lifts (inside $V[G]$) to $$j_{U_0}\colon V[G]\rightarrow M_0[G\ast (j_{U_0}[G]\setminus \kappa+1)].$$

(Note that the forcing used at stage $\kappa$ of $j_{U_0}(\po)$ is trivial, as $U$ is assumed to have  trivial Mitchell rank. This is why we obtain the above lifting).

Denote $H:=G\ast (j_{U_0}[G]\setminus \kappa+1)$. Clearly, $H$ is $j(\mathbb{P})_{\kappa^*}$-generic over $M_1$ and $M$ as well. Both $M_1[H]$ and $M[H]$ are closed under $\lambda$-sequence of their elements inside $M_0[H]$, and $M[H]$ is closed under $\lambda^*$-sequences in $M_1[H]$.

\smallskip

Proceed now with $\mathbb{NR}(\kappa^*)$. Let us construct $h_0\in V[G]$ which is $\mathbb{NR}(\kappa^*)$-generic over the models $M_0[H], M_1[H]$ and $M[H]$. {Note that $$(\mathbb{NR}(\kappa^*))^{M_0[H]} = (\mathbb{NR}(\kappa^*))^{M_1[H]} = (\mathbb{NR}(\kappa^*))^{M[H]}$$ since $M_1[H]$ and $M[H]$ are closed under $\lambda$-sequences in $M_0[H]$.}

For each $p\in G$,  elementarity implies that  $j(\supp(p))$ is  nowhere stationary below $j(\kappa)$. In particular, $j(\supp(p))\cap \kappa^*$ is nonstationary in $\kappa^*$, which entails $\kappa^*\notin j(\supp(p))$.\footnote{Indeed, let $C\s \kappa^*$ be a club disjoint from $j(\supp(p))\cap \kappa^*=\supp(p)$. Since $C\s j(C)$ and $j(C)$ is a club on $j(\kappa^*)$, it follows that $\kappa^*\in j(C)$. Since $j(C)$ is disjoint from $j(\supp(p))$ we obtain the desired conclusion.} 
This observation  indicates that there is no need to construct $h_0$ below a master condition; putted in different  words, any $h_0\in V[G]$ which is generic over $M[H]$ would work for lifting purposes. Additionally, let us observe  that since both $M_1[H]$ and $M[H]$ are closed under $\lambda$-sequences in $M_0[H]$, it suffices to find such $h_0\in V[G]$ which is generic over $M_0[H]$. We construct $h_0$ as follows: Recall that $M_0[H]$ is closed under $\kappa$-sequences inside $V[G]$, and thus, by Lemma \ref{Claim: NR is uniformly strategically closed}, $(\mathbb{NR}(\kappa^*))^{M_0[H]}$ is $\kappa^+$-strategically closed in $V[G]$. Also, $V[G]$ lists in order-type $\kappa^+$ all dense subsets of $(\mathbb{NR}(\kappa^*))^{M_0[H]}$ inside the model $M_0[H]$. As a result, $h_0\in V[G]$ can be constructed by meeting all those dense sets, one-by-one, in a run of the game $G_{\kappa^+}(\mathbb{NR}(\kappa^*))$ in which Player II plays according to their winning strategy. Note that both $M_{1}[H*h_0]$ and $M[H*h_0]$ remain closed under $\lambda$-sequences inside $M_0[H\ast h_0]$, and that  $M[H*h_0]$ stays closed under $\lambda^*$-sequences in  $M_1[H*h_0]$.

\smallskip

Next, proceed to $\mathbb{NR}(\lambda^*)$. Let us construct $h_1\in M_0[H\ast h_0]$ which is $\mathbb{NR}(\lambda^*)$-generic over both $M_{1}[H*h_0]$ and $M[H*h_0]$. Once again, note that 
$$(\mathbb{NR}(\lambda^*))^{M_1[H*h_0]}= (\mathbb{NR}(\lambda^*))^{M[H*h_0]}$$
as $M[H*h_0]$ is closed under $\lambda^*$-sequences in $M_1[H*h_0]$.

This part of the construction posses an addition caveat: It is not true anymore that $\lambda^*\notin \supp(j(p))$ for conditions $p\in G$. Therefore, in order to apply Silver's lifting criterion at the end of the construction, we will need to ensure that the generic $h_1$ is chosen so that  $j[G]\restriction (\lambda^*+1)\s H*h_0*h_1$. We do this by constructing a suitable master condition. The main obstacle is that the forcing $\mathbb{NR}(\lambda^*)$ is not sufficiently directed closed; by Lemma \ref{Lemma: NR_gamma is gamma directed} it has a dense, $\zeta$-directed closed subset for arbitrarily high $\zeta<\lambda^*$, but it's unclear that conditions of the form $i(q)(\lambda^*)$ for $q\in H$ make it into the relevant dense directed closed subset. We overcome this issue by isolating a dense subset $D\subseteq j_1(\po)$, whose conditions are transferred through $i$ to a sufficiently directed closed dense subset of $j(\po)$.   

By Lemma \ref{lem: technical lemma for the proof of strong compactness}, there exists a dense subset $D\subseteq j_1(\po)$ consisting of all conditions $q\in j_1(\po)$ for which there exists a club $C\subseteq \kappa^*$, such that, for every $\alpha\in C$ and for every $\beta\in \supp(q)\setminus ( \alpha+1 )$,
$$ q\restriction \beta \Vdash q(\beta)\in \dot{\mathbb{NR}}_{\alpha^{++, M_1}}(\beta). $$
Given $q\in H\cap D$, let $C\subseteq \kappa^*$ be the club witnessing the fact that $q\in D$. Since $\crit(i) = \kappa^*$, $\kappa^*\in i(C)$. Thus, if $\lambda^*\in \supp(i(q))$, then 
$$q\restriction \lambda^* \Vdash q(\lambda^*)\in \dot{\mathbb{NR}}_{(\kappa^*)^{++, M}}(\lambda^*).$$
Working in $M[H*h_0]$, consider the set $$A = \{ i(q)(\lambda^*) \colon q\in (H\cap D) \land \lambda^*\in \supp(i(q)) \}.$$ Since $i\restriction j_1(\po)\in M$, $A\in M[H*h_0]$. Also $A\subseteq \mathbb{NR}_{(\kappa^*)^{++}}(\lambda^*)$ and $|A| = (\kappa^*)^+$. Since $A$ is a directed set of conditions and $\mathbb{NR}_{(\kappa^{*})^{++}}(\lambda^*)$ is a $(\kappa^{*})^{++}$-directed closed poset, there exists in $M[H*h_0]$ a lower bound $t\in \mathbb{NR}(\lambda^*)$ of all the conditions in $A$. In particular, by the density of $D$, $t$ is a lower bound of all the conditions in $\{ i(q)(\lambda^*) \colon q\in H \}$. Since $H$ is generated by  conditions of the form $j_1(p)$\footnote{$H$ was generated by conditions of the form $j_{U_0}(p)$. However, the critical point of $j_{U_0(U_1)}$ was way above the rank of $j_{U_0}(\mathbb{P})$ so $H=\{j_1(p)\mid p\in G\}$ is also generic.} for $p\in G$, we deduce that $t$ extends $j(p)(\lambda_*)$ for all $p\in G$. Finally, we construct $h_1\in M_0[G*h_0]$ which is $\mathbb{NR}(\lambda^*)$-generic over both $M_1[H*h_0]$ and $M[H*h_0]$ with $t\in h_1$. To do this, note that $\mathbb{NR}(\lambda^*)$ is $\lambda^{+,M_0}$-strategically closed in the sense of $M_0[H*h_0]$, since $M_1[H*h_0]$ and $M[H*h_0]$ are closed under $\lambda$-sequences in $M_0[H*h_0]$. Furthermore, $M_0[H*h_0]$ has a list of order-type $\lambda^{+,M_0}$ of all dense subsets of $\mathbb{NR}(\lambda^*)$. Thus, $h_1\in M_0[H*h_0]$ can be constructed by meeting all those dense subsets, one-by-one, in a run of the game $G_{\lambda^{+,M_0}}(\mathbb{NR}(\lambda^*))$ in which Player II plays according to their winning strategy, and Player I opens the game by picking the condition $t$. 

By standard arguments, $M[H*h_0*h_1]$ remains closed under $\lambda^*$-sequences in $M_1[H*h_0*h_1]$.

\smallskip

Finally, let us proceed with the last bit of the iteration, $\mathbb{NR}(\lambda^*, j(\kappa))$. Let us construct a suitable generic set $H^*\subseteq \mathbb{NR}(\lambda^*, j(\kappa))$ over $M[H*h_0*h_1]$ such that $i[H]\setminus (\lambda^*+1)\subseteq H^*$, where as usual
$$ i[H]\setminus (\lambda^*+1) := \{i(q)\setminus (\lambda^*+1)\colon q\in H\}.$$
In order to construct $H^*$, we first need to  construct a suitable master condition $s\in \mathbb{NR}(\lambda^*, j(\kappa))$, such that $s$ extends all the conditions in $i[H]\setminus (\lambda^*+1)$. We do this by the same technique that facilitated the construction of $t$ above. Let $D\subseteq j_1(\po)$ be the same dense set as above. Given $q\in H\cap D$, let $C\subseteq \kappa^*$ be the club witnessing the fact that $q\in D$. As above, $\kappa^*\in i(C)$, so for every $\beta\in \supp(i(q))\setminus ( \kappa^*+1 )$, 
$$i(q)\restriction\beta\Vdash i(q)(\beta)\in \dot{\mathbb{NR}}_{(\kappa^*)^{++}}(\beta).$$
In particular, $B = \{ i(q)\setminus (\lambda^*+1) \colon q\in H\cap D \}$ is contained in a $(\kappa^{*})^{++, M}$-directed closed subset of $\mathbb{NR}(\lambda^*, j(\kappa))$. Since $|H| = (\kappa^*)^{+, M}$, there exists a lower bound $s\in \mathbb{NR}(\lambda^*, j(\kappa))$ of $B$. The same condition $s$ is a lower bound of $i[H]\setminus (\lambda^*+1)$ because $D$ is dense in $j_1(\po)$. Thus, it remains to construct $H^*\in M_1[H*h_0*h_1]$ such that $H^*\subseteq \mathbb{NR}(\lambda^*, j(\kappa))$ is generic over $M[H*h_0*h_1]$ and $s\in H^*$. Since $M[H*h_0*h_1]$ is closed under $\lambda^*$-sequences inside $M_1[H*h_0*h_1]$, $\mathbb{NR}(\lambda^*, j(\kappa))$ is $(\lambda^*)^{+, M_1}$-strategically closed in $M_1[H*h_0*h_1]$. Also, $M_1[H*h_0*h_1]$ has an enumeration of order type $(\lambda^*)^{+, M_1}$ consisting of all dense subsets of $\mathbb{NR}(\lambda^*, j(\kappa))$. Thus, the generic $H^*\in M_1[H*h_0*h_1]$ can be constructed by meeting all those dense sets in a run of the game $G_{(\lambda^*)^{+, M_1}}( \mathbb{NR}(\lambda^*, j(\kappa)) )$ in which Player II plays according to their winning strategy and Player I opens the game by picking the condition $s$. Finally, note that $M[H*h_0*h_1*H^*]$ remains closed under $\lambda^*$-sequences in $M_1[H*h_0*h_1]$.

\smallskip

This concludes the construction of the generic extension $M[H*h_0*h_1*H^*]$ inside $V[G]$. Notice that we made sure through the construction that $$j[G]\subseteq H*h_0*h_1*H^*.$$
Therefore, by Silver's lifting criterion, $j$ lifts to an embedding $$j^*\colon V[G]\rightarrow M[H\ast h_0 \ast h_1 \ast H^*].$$ It remains to prove that $j^*$  witnesses $\lambda$-strong compactness of $\kappa$ in $V[G]$. Indeed, this follows by combining the following facts:
    \begin{itemize}
        \item $M[H\ast h_0\ast h_1\ast H^*]$ is closed under $\kappa$-sequences in $V[G]$.
        \item $j(\kappa)> \lambda$. 
        \item $j[\lambda]$ is covered by the set $S:=i[j_1(\lambda)] = i[\lambda^*]\in M[H\ast h_0 \ast h_1 \ast H^*]$, and $|S|^{M[H\ast h_0 \ast h_1 \ast H^*]}=\lambda^* <j(\kappa)$.
    \end{itemize}

    For clause \eqref{clause: NR sometimes make supercompact into strongly compact 1.5} of the theorem, observe that no cardinal is measurable if it carries a nonreflecting stationary subset. Hence every $V$-measurable cardinal in the interval $(\kappa_0, \kappa)$ ceases to be measurable after forcing with $\po$. Moreover, any measurable cardinal in $V[G]$ must already have been measurable in $V$ (Indeed, $\po$ has a gap at the least $V$-measurable cardinal and is trivial below it, so it cannot create new measurable cardinals). Thus, in $V[G]$, the interval $(\kappa_0,\kappa)$ does not contain measurable cardinals.
  
    Finally, let us prove clause \eqref{clause: NR sometimes make supercompact into strongly compact 2} of the theorem. By Theorem \ref{thm: Ketonen's charachterization for strong compactness}, it suffices to prove that, in $V[G]$, every regular cardinal $\mu\geq \kappa$ has a $\kappa$-complete uniform ultrafilter. By clause \eqref{clause: NR sometimes make supercompact into strongly compact 1}, $\kappa$ is $\lambda$-strongly compact in $V[G]$, so by Theorem \ref{thm: Ketonen's charachterization for strong compactness}, every regular cardinal $\mu\in [\kappa, \lambda]$ carries a $\kappa$-complete uniform ultrafilter in $V[G]$. Thus it remains to take care of regular cardinals above $\lambda$. Since $\lambda$ is strongly compact in $V$, every regular cardinal $\mu> \lambda$ carries a uniform, $\lambda$-complete ultrafilter $U_\mu\in V$. Since $|\po| = \kappa^+$, the same ultrafilter generates a uniform, $\lambda$-complete ultrafilter $U^*_\mu$ on $\mu$ in $V[G]$,\footnote{Indeed, for every $\eta<\lambda$ and a sequence $\la A_i \colon i<\eta \ra \subseteq U^*_\mu$ in $V[G]$, find a sequence $\la B_i \colon i<\eta \ra\subseteq U_\mu$ in $V[G]$ such that $B_i \subseteq A_i$ for all $i$. Let $S\in V$, $S\subseteq U_\mu$, $|S|<\lambda$ be a set that covers $\{ B_i\colon i<\eta \}$. Then $\bigcap S\subseteq \bigcap_{i<\eta} A_i$ and $\bigcap S\in U_\mu$, so $\bigcap_{i<\eta} A_i\in U^*_\mu$.} as desired.
\end{proof}

    The case where there are no measurable cardinals above $\kappa$ will also matter to us, and it can be proved using the same techniques as above. In fact, the proof is simpler since there is no need to take an additional ultrapower with a normal measure of order $0$ on $\lambda$.

\begin{theorem}\label{thm: Strong compactness after iterating nonreflecting stat sets when there are no measurable cardinals above} 
        Assume {\rm{GCH}}. Assume that $\kappa$ is a supercompact cardinal, $\kappa_0<\kappa$, and let $\po = \mathbb{NR}(\kappa_0, \kappa)$. Suppose that there are no measurable cardinals above $\kappa$. Then $\kappa$ is strongly compact after forcing with $\mathbb{NR}(\kappa_0, \kappa)$. Moreover, there are no measurable cardinals in the interval $(\kappa_0, \kappa)$ after forcing with $\mathbb{NR}(\kappa_0, \kappa)$.
    \end{theorem}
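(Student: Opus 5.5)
The plan is to adapt the proof of Theorem \ref{thm: Strong compactness after iterating nonreflecting stat sets}, omitting the intermediate ultrapower by a measure on $\lambda$. Let $G\subseteq \po=\mathbb{NR}(\kappa_0,\kappa)$ be $V$-generic, and fix an arbitrary regular $\mu\geq\kappa$ (or any $\lambda>\kappa$). We aim to show that $\kappa$ is $\lambda$-strongly compact in $V[G]$ for every $\lambda$. Fix a normal measure $U_0$ on $\kappa$ of Mitchell order $0$ and a fine normal measure $W$ on $\mathcal{P}_\kappa(\lambda)$. Put $j_{U_0}\colon V\to M_0$ and $\kappa^*=j_{U_0}(\kappa)$. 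Let $i=j^{M_0}_{j_{U_0}(W)}\colon M_0\to M$ and $j=i\circ j_{U_0}$.

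Since there are no measurable cardinals above $\kappa$ in $V$, there are none in $(\kappa^*,j(\kappa))$ in $M$ apart from those coming from $\kappa^*$ itself. So in $M$ we get the factorization
\[
j(\po)=j(\po)_{\kappa^*}\ast\mathbb{NR}(\kappa^*)\ast\mathbb{NR}(\kappa^*,j(\kappa)).
\]
This is the key simplification, since the analysis involves no $\lambda^*$ coordinate. The steps are as follows.

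\begin{enumerate}
\item Lift $j_{U_0}$ via Lemma \ref{Lemma: lifting elementary embeddings with fusion}. Stage $\kappa$ of $j_{U_0}(\po)$ is trivial because $U_0$ has order $0$. This gives $H=G\ast(j_{U_0}[G]\setminus\kappa+1)$, which is generic over $M_0$ and hence over $M$, since $M$ is closed under $\lambda$-sequences in $M_0$.
\item Build $h_0\in V[G]$ generic for $\mathbb{NR}(\kappa^*)$ over $M_0[H]$. This uses $\kappa$-closure of $M_0[H]$ in $V[G]$, $\kappa^+$-strategic closure (Lemma \ref{Claim: NR is uniformly strategically closed}), and the fact that there are only $\kappa^+$ dense sets by GCH. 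No master condition is needed, because $\kappa^*\notin\supp(j(p))$ for $p\in G$, by the nonstationary-support argument in the previous proof.
\item Build a master condition for the tail. Take the dense set $D\subseteq j_{U_0}(\po)$ of Lemma \ref{lem: technical lemma for the proof of strong compactness}, applied in $M_0$. For $q\in H\cap D$ with witnessing club $C$, we have $\kappa^*\in i(C)$. Hence every coordinate of $i(q)$ above $\kappa^*$ lies in $\mathbb{NR}_{(\kappa^*)^{++}}$. The set $\{i(q)\setminus(\kappa^*+1): q\in H\cap D\}$ belongs to $M[H\ast h_0]$, since $i\restriction j_{U_0}(\po)\in M$. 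It has size $(\kappa^*)^{+}$ and sits in a $(\kappa^*)^{++}$-directed closed dense subset. So it has a lower bound $s$, which by density of $D$ is below all of $i[H]\setminus(\kappa^*+1)$.
\item Build $H^*\in M_0[H\ast h_0]$ with $s\in H^*$, generic over $M[H\ast h_0]$. The tail forcing is $\lambda^{+,M_0}$-strategically closed from the viewpoint of $M_0[H\ast h_0]$, and $M_0[H\ast h_0]$ enumerates its dense sets in that order type by GCH.
\end{enumerate}

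Then $j[G]\subseteq H\ast h_0\ast H^*$, and Silver's criterion lifts $j$ to $j^*\colon V[G]\to M[H\ast h_0\ast H^*]$, definable in $V[G]$. The target model is closed under $\kappa$-sequences, $j^*(\kappa)>\lambda$, and $j^*[\lambda]$ is covered by $i[\lambda]\in M$, which has $M$-size below $j(\kappa)$. This yields a fine $\kappa$-complete ultrafilter on $\mathcal{P}_\kappa(\lambda)$ in $V[G]$. Since $\lambda$ is arbitrary, $\kappa$ is strongly compact. Alternatively, one can invoke Theorem \ref{thm: Ketonen's charachterization for strong compactness} at inaccessible $\lambda$.

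For the ``moreover'' part, I would copy clause (2) of the previous theorem. Each $V$-measurable $\alpha\in(\kappa_0,\kappa)$ acquires a nonreflecting stationary set by Lemma \ref{lemma: NR adds a nonreflecting stationary set}, and the later stages add no subsets of $\alpha$, so $\alpha$ is no longer measurable. Gap forcing (Theorem \ref{thm: Gap forcing theorem}) then shows that no new measurables are created.

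I expect step 3 to be the main obstacle: the directedness of the images $i(q)$ at coordinates above $\kappa^*$ must be verified carefully. This is the place where the non-open dense set of Lemma \ref{lem: technical lemma for the proof of strong compactness} is essential. One also has to check that the count $|H|=(\kappa^*)^{+,M}$ stays below the directed-closure degree $(\kappa^*)^{++}$.
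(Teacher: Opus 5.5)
Your proposal is correct and is essentially the paper's proof: the same embedding $j=j^{M_0}_{j_{U_0}(W)}\circ j_{U_0}$ and three-factor decomposition, no master condition at $\kappa^*$, and the tail master condition obtained from the dense set of Lemma \ref{lem: technical lemma for the proof of strong compactness}. One sentence is false as written, namely that $M$ has no measurables in $(\kappa^*,j(\kappa))$, since $j(\kappa)$ is a limit of $M$-measurables by elementarity; what is true, and what your step 4 actually uses, is that there are none in $(\kappa^*,\lambda]$, and you should fix $\lambda$ as the paper does (a strong limit with $\cf(\lambda)>\kappa$) so that $j_{U_0}(\lambda)=\lambda$ and $i[\lambda]$ genuinely covers $j[\lambda]$.
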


    \begin{proof}
        Fix a strong limit cardinal  $\lambda>\kappa$ with $\cf(\lambda)> \kappa$. 
        Fix a fine, normal measure $W$ on $\mathcal{P}_\kappa(\lambda)$. Let $U$ be a normal measure on $\kappa$ of Mitchell order $0$. Consider the iterated ultrapower, first with $U$ and then with the image of $W$, namely 
        $$j = j^{M_U}_{j_{U}(W)}\circ j_U\colon V\to M.$$
        Note that $\lambda = j_U(\lambda)$ is not measurable under the hypotheses of the theorem, so there is no need to take an ultrapower with a normal measure of order $0$ on $\lambda$. Denote $\kappa^* = j_U(\kappa)$, and factor 
        $$j(\po) = j(\po)_{\kappa^*}\ast \dot{\mathbb{NR}}(\kappa^*) \ast \dot{\mathbb{NR}}(\lambda, j(\kappa)).$$
        Now mimic the proof of Theorem \ref{thm: Strong compactness after iterating nonreflecting stat sets} to find suitable generics $H*h_0*H^*$ for $j(\po)$ over $M$, and lift $j$ to $j^*\colon V[G]\to M[H*h_0*H^*]$, witnessing $\lambda$-strong compactness of $\kappa$ in $V[G]$. We omit the details here since they are exactly the same as in the proof of Theorem \ref{thm: Strong compactness after iterating nonreflecting stat sets}.
        
        Since $\lambda$ was arbitrarily large, $\kappa$ is fully strongly compact in $V[G]$.
    \end{proof}

We conclude this subsection by characterizing normal measure on $\kappa$ after forcing with $\mathbb{NR}(\kappa_0, \kappa)$.

\begin{lemma} \label{lemma: normal measures after iterating NRs}
    Let $\kappa$ be a measurable cardinal and  $\kappa_0<\kappa$. Denote $\po = \mathbb{NR}(\kappa_0, \kappa)$. Let $G\subseteq \po$ be generic over $V$. Then $\kappa$ remains measurable in $V[G]$. Moreover:
    \begin{enumerate}
        \item For every normal measure $U\in V$ on $\kappa$ of Mitchell order $0$, $U$ generates a normal measure $U^*\in V[G]$ on $\kappa$.
        \item Every normal measure $W\in V[G]$ on $\kappa$ has the form $U^*$ for some normal measure $U\in V$ of Mitchell order $0$ on $\kappa$.
    \end{enumerate}
\end{lemma}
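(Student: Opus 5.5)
We prove the two clauses separately, following the pattern of Claims \ref{Claim: indestructibility proof, claim 1} and \ref{claim: Mitchell order 0 generated by measures of order 0}. Measurability of $\kappa$ in $V[G]$ then follows from clause (1), since every measurable cardinal carries a normal measure of Mitchell order $0$.

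\textbf{Clause (1).} Fix a normal measure $U\in V$ on $\kappa$ with $o(U)=0$, and let $j_U\colon V\to M_U$ be its ultrapower embedding. Since $o(U)=0$, $\kappa$ is not measurable in $M_U$. Hence the stage-$\kappa$ iterand $j_U(\la\dot{\qo}_\alpha\colon\alpha<\kappa\ra)(\kappa)$ is trivial, because nontrivial stages are only at measurable cardinals. $\po$ is an $I$-spaced nonstationary support iteration, with $I$ the measurables in $(\kappa_0,\kappa)$, and its iterands satisfy the hypotheses of the Fusion Lemma \ref{Lemma: fusion lemma} by Lemma \ref{Claim: NR is uniformly strategically closed}. (If $I$ is not stationary, the iteration is essentially a small forcing and the statement is the Lévy–Solovay theorem.) So Lemma \ref{Lemma: lifting elementary embeddings with fusion} applies to $j_U$, whose generators are just $\kappa$, with trivial $g$. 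Thus $j_U$ lifts inside $V[G]$ to $j^*\colon V[G]\to M_U[G*H]$, where $H$ is generated by $j_U[G]\setminus(\kappa+1)$. Let $U^*$ be derived from $j^*$ via $\kappa$. Exactly as in Claim \ref{Claim: indestructibility proof, claim 1}, for $X\in U^*$ choose $p\in G$ with $j_U(p)\Vdash\check\kappa\in j_U(\dot X)$. Then $A=\{\alpha\colon p\Vdash\check\alpha\in\dot X\}\in U$ and $A\subseteq X$, so $U$ generates $U^*$.

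\textbf{Clause (2), reduction to $V$.} Let $W\in V[G]$ be normal on $\kappa$. The forcing $\po$ is trivial below the least $V$-measurable above $\kappa_0$, so it factors with a gap: take a small nontrivial initial piece, for example a trivial-looking stage or $\po_{\mu}*\dot{\qo}_\mu$ for the first active $\mu$, followed by a highly strategically closed tail. If necessary, add a harmless Cohen real preliminarily, as is standard when applying Theorem \ref{thm: Gap forcing theorem}. By Hamkins' Gap Forcing Theorem, $j_W\restriction V\colon V\to M$ is definable in $V$, so $U:=W\cap V\in V$ is a normal measure on $\kappa$. Moreover $j_W$ restricts to $j_U$ when $M=M_U$; more carefully, $U$ is derived from $j_W\restriction V$, and one checks $W\supseteq U^*$ once $U^*$ is known to exist.

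\textbf{Clause (2), main step.} The key point, and the main obstacle, is to show $o(U)=0$. Suppose otherwise. Then $U$ concentrates on measurable cardinals in $(\kappa_0,\kappa)$, so $W$ concentrates on $I$. In $V[G]$, however, each $\beta\in I$ carries the nonreflecting stationary set $S_\beta$ added at stage $\beta$. The tail $\po\setminus(\beta+1)$ adds no subsets of $\beta$, as it is $\beta^+$-strategically closed, so $S_\beta$ survives and $\beta$ is not measurable in $V[G]$. Because there is such a set at $W$-almost every $\beta$, Łoś gives that in $\Ult(V[G],W)$, $\kappa$ carries a nonreflecting stationary set $T=j_W(\vec S)(\kappa)$. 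Since $\Ult(V[G],W)$ and $V[G]$ have the same $\mathcal{P}(\kappa)$, and stationarity of $T$ is absolute upward here (clubs of $\kappa$ in $V[G]$ lie in the ultrapower), $T$ is a nonreflecting stationary subset of $\kappa$ in $V[G]$. This contradicts the measurability of $\kappa$ in $V[G]$, since measurable cardinals reflect stationary sets. Hence $o(U)=0$, so $U^*$ exists by clause (1). Since $U\subseteq W$ and $U^*$ is the ultrafilter generated by $U$, we get $U^*\subseteq W$, and therefore $W=U^*$.
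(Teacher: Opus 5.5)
Your proof is correct and follows the paper's argument essentially step by step. For clause (1) you lift $j_U$ via Lemma \ref{Lemma: lifting elementary embeddings with fusion}, using that stage $\kappa$ of $j_U(\po)$ is trivial when $o(U)=0$; for clause (2) you use the Gap Forcing Theorem (the gap sits just above the first measurable $\mu>\kappa_0$, so no Cohen real is needed) and pull the nonreflecting stationary set $j_W(\vec S)(\kappa)$ back from $\Ult(V[G],W)$ to $V[G]$. One small inaccuracy: your parenthetical claim that a nonstationary $I$ makes $\po$ essentially a small forcing is false when $I$ is unbounded but nonstationary, but that aside is unnecessary, since the fusion and lifting arguments do not genuinely depend on $I$ being stationary.
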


\begin{proof}
    Let $U\in V$ a normal measure on order $0$ on $\kappa$. Note that $\kappa$ is not measurable in $M_U\simeq \Ult(V, U)$. Thus, by Lemma \ref{Lemma: lifting elementary embeddings with fusion}, $j_{U_0}$ lifts to an embedding $j^*\colon V[G]\to M[G*(j_{U_0}[G]\setminus (\kappa+1))]$. Define $U^* = \{ X\subseteq \kappa \colon \kappa\in j^*(X)\}$. Then $U^*$ is a normal measure on $\kappa$ in $V[G]$. It's not hard to verify that $j^*$ is the ultrapower embedding associated with $U^*$. Let us argue that $U$ generates $U^*$ in $V[G]$. Assume that $X\in V[G]$ is a subset of $\kappa$ in $U^*$. Let $\dot{X}$ be a $\po$-name for $X$. Then, for some $p\in G$, $j_{U_0}(p)\Vdash \kappa\in j_{U_0}(\dot{X})$. It follows that $A = \{ \alpha<\kappa \colon p\Vdash \alpha\in X \}\in U$, and $A\subseteq X$.

    Conversely, assume that $W \in V[G]$ is a normal measure on $\kappa$. By Hamkins' Gap Forcing Theorem (Theorem \ref{thm: Gap forcing theorem}), we have $U := W \cap V \in V$. It is straightforward to verify that $U \in V$ is a normal measure on $\kappa$. We claim that $U$ must have Mitchell order $0$. Let $\Delta \subseteq \kappa$ denote the set of measurable cardinals in $V$, and suppose toward a contradiction that $\Delta \in U$. Then in particular $\Delta \in W$. Recall that no cardinal in $\Delta$ remains measurable in $V[G]$, since $\mathbb{P}$ adds a nonreflecting stationary subset to each such cardinal. Since $W$ is normal and $\Delta \in W$, it follows that $\kappa \in j_W(\Delta)$. Thus, in $\mathrm{Ult}(V[G], W)$, there exists a nonreflecting stationary subset $S \subseteq \kappa$.    However, $V[G]$ and $\mathrm{Ult}(V[G], W)$ agree on subsets of $\kappa$ and on their stationarity. Therefore, $S$ is a nonreflecting stationary subset of $\kappa$ in $V[G]$, contradicting the measurability of $\kappa$ in $V[G]$. 
    
    It follows that $U$ has Mitchell order $0$. Finally, since $U\subseteq W$ and $U$ generates an ultrafilter $U^*\in V[G]$, we deduce that $W = U^*$, as desired.
\end{proof}
    
\section{The main theorems}\label{sec:main section}
In this section we will apply the theory developed so far to prove the main theorems of the paper. The general theme (as indicated in the introduction) is to produce models where measurable cardinals beyond the current reach of the Inner Model Program (e.g., measurable limits of supercompacts, etc.) carry any prescribed amount of normal measures. All of our results are proved under the assumption that in the ground model, each relevant measurable cardinal carries exactly one normal measure with Mitchell order $0$.

\subsection{The first finitely many measurable cardinals}

We begin proving a variation of  the classical theorem of Kimchi and Magidor (unpublished) saying that, for each $n<\omega$, the first $n$ measurable cardinals can all be strongly compact and each of them carries any prescribed number of normal measures. 

\begin{theorem}\label{thm: IdentityCrises + Normal measures}
    Assume the $\mathrm{GCH}$ holds, there are $n<\omega$ supercompact cardinals $\langle \kappa_i\colon i<n\rangle$, there are no measurable cardinals above $\kappa_{n-1}$,  and each $\kappa_i$ has a unique normal measure of Mitchell order $0$. For every $i<n$, let $\tau_i\leq\kappa^{++}_i$ be a cardinal. Then there is a generic extension where {\rm{GCH}} holds, $\langle \kappa_i\colon i<n\rangle$ are the first $n$ strongly compact cardinals, the first $n$ measurable cardinals,  and each $\kappa_i$ carries exactly $\tau_i$ normal measures.
\end{theorem}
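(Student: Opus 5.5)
We build the model in stages, working from the top supercompact $\kappa_{n-1}$ downward (or, equivalently, by a product/iteration whose components act on disjoint intervals $(\kappa_{i-1},\kappa_i]$, with $\kappa_{-1}=\omega$). For each $i<n$ we force with a composition of three posets living on the interval $(\kappa_{i-1},\kappa_i]$:
\begin{enumerate}
\item the indestructibility preparation $\mathbb{L}(\kappa_i,\kappa_{i-1}^+)$ of Notation~\ref{notation: Laver};
\item the splitting forcing $\po^{\tau_i,I_i}$ with $I_i$ the inaccessible limits of strong cardinals in $(\kappa_{i-1},\kappa_i)$, or its $\kappa_i^{++}$ variant from Lemma~\ref{lemma: kappa^++ many normal measures} when $\tau_i=\kappa_i^{++}$;
\item $\mathbb{NR}(\kappa_{i-1},\kappa_i)$, which kills all measurables in $(\kappa_{i-1},\kappa_i)$.
\end{enumerate}
The order of these factors matters, and I would rather put $\mathbb{NR}$ before the splitting forcing. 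The splitting forcing must see measures concentrating on $I_i$. If we kill the measurables first, every normal measure on $\kappa_i$ has order $0$ and is unique (by Lemma~\ref{lemma: normal measures after iterating NRs} plus the UA consequence, preserved through the Laver-type preparation by Theorem~\ref{theorem: Laver indestructibility+GCH}(2)). Then the splitting step with $I$ the set of all inaccessibles in $(\kappa_{i-1},\kappa_i)$ produces exactly $\tau_i$ measures by Theorem~\ref{thm: properties of the splitting forcing}. The plan is therefore, for each $i$: first $\mathbb{L}(\kappa_i,\kappa_{i-1}^+)$, then $\mathbb{NR}(\kappa_{i-1},\kappa_i)$, then the splitting forcing $\po^{\tau_i,I}$ with $I$ all inaccessibles in $(\kappa_{i-1},\kappa_i)$ (or the $\mathrm{Add}$ version for $\tau_i=\kappa_i^{++}$).

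Counting normal measures on $\kappa_i$ goes as follows. The forcings attached to $\kappa_j$ for $j>i$ are $\kappa_i^{+}$-directed closed above $\kappa_i$, so they add no subsets of $\mathcal P(\kappa_i)$ and preserve the count. The forcings for $j<i$ are small, so by Lévy–Solovay they preserve the count as well. At $\kappa_i$ itself, Theorem~\ref{theorem: Laver indestructibility+GCH}(2) keeps exactly one order-$0$ measure. Lemma~\ref{lemma: normal measures after iterating NRs} then leaves exactly one normal measure $U^*$, generated by it. Theorem~\ref{thm: properties of the splitting forcing}(3) (every $W$ lifts some $V$-measure, and $I\in U^*$ since $\kappa_i$ is Mahlo) gives exactly $\tau_i$ measures, or $\kappa_i^{++}$ via the $\mathrm{Add}$ argument, checking that only one lift of $\mathrm{NR}$-stage measures is relevant. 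GCH and cardinals are preserved by Lemmas~\ref{Lemma: nonstatsupport iterations preserving GCH} and~\ref{lemma: properties of NR(kappa0,kappa)}.

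For strong compactness of $\kappa_i$ I intend to apply Theorem~\ref{thm: Strong compactness after iterating nonreflecting stat sets} with $\lambda=\kappa_{i+1}$, which is the next measurable after the preparation, and Theorem~\ref{thm: Strong compactness after iterating nonreflecting stat sets when there are no measurable cardinals above} for $i=n-1$. The arguments are run in the model after the preparation, where $\kappa_i$ is still supercompact, $\kappa_{i+1}$ is strongly compact, and nothing between them is measurable because $\mathbb{NR}(\kappa_i,\kappa_{i+1})$ has been or will be applied. The subsequent splitting forcing on $\kappa_i$ must preserve strong compactness. For this I would mimic the lifting in Theorem~\ref{thm: Strong compactness after iterating nonreflecting stat sets}: use the iterate $j=i\circ j_1$ along order-$0$ measures, pick the splitting coordinate at $\kappa^*$ arbitrarily, and note that the stage at $\lambda^*$ is trivial since $\lambda^*$ is not inaccessible in the relevant range after NR. Alternatively, strong compactness can be deduced by Ketonen's characterization, Theorem~\ref{thm: Ketonen's charachterization for strong compactness}, since uniform $\kappa_i$-complete ultrafilters on regular $\mu\geq\kappa_{i+1}$ survive the small forcing.

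That $\kappa_0,\dots,\kappa_{n-1}$ are the first $n$ measurables follows from clause (2) of Theorem~\ref{thm: Strong compactness after iterating nonreflecting stat sets} together with the gap forcing argument, which shows no new measurables are created. I expect the main obstacle to be the interaction between the indestructibility step and the order/number of measures. In particular one must verify that the $\mathbb{NR}$ and splitting posets on higher intervals are sufficiently directed closed to be absorbed by the indestructibility of lower $\kappa_i$'s, or arrange the argument via direct lifting. The second delicate point is that a lifting master condition exists for the splitting component at $\lambda^*$ in the strong compactness proof.
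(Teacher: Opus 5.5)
\textbf{The gap: strong compactness of $\kappa_i$ under your ordering.} Once $\mathbb{NR}(\kappa_{i-1},\kappa_i)$ has been forced, $\kappa_i$ is only strongly compact. The later splitting forcing $\po^{\tau_i,I}$, with $I$ the set of \emph{all} inaccessibles, then has to be lifted along $j=i\circ j_1$ from the proof of Theorem~\ref{thm: Strong compactness after iterating nonreflecting stat sets}.

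\begin{itemize}
\item Your claim that the stage at $\lambda^*$ is trivial is false. $\mathbb{NR}$ preserves cardinals and GCH, so $\lambda^*$ remains inaccessible in the target model. So does every $M$-inaccessible in $(\kappa^*,\lambda^*)$, for example $\lambda$ itself and all Mahlo cardinals of the current ground model in $(\kappa_i^{++},\lambda)$.
\item All of these are active coordinates of $j(\po^{\tau_i,I})$. Hence the tail $j(\po^{\tau_i,I})\setminus(\kappa^*+1)$ is strategically closed only up to the first $M$-inaccessible above $\kappa^*$. A master condition, even if it exists, does not let you meet its $(\lambda^*)^{+}$ many dense sets.
\item Worse, for $\tau_i\geq 2$ the required generic cannot lie in $M_0[\cdots]$ or $M_1[\cdots]$ at all. $M[\cdots]$ is closed under $\lambda$-sequences in those models, while the splitting coordinates below an $M$-Mahlo $\mu\in(\kappa^*,\lambda)$ add a new subset of $\mu$.
\end{itemize}

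The same Gap Forcing argument shows that splitting at all inaccessibles destroys the supercompactness of $\kappa_i$. That is why the paper takes $I_i$ to be the inaccessible limits of strong cardinals: an embedding coming from a Mitchell-order-$0$ normal measure on $\mathcal{P}_\kappa(\lambda)$ has no such points in $(\kappa,\lambda]$. Your $\Add$ variant at all inaccessibles for $\tau_i=\kappa_i^{++}$ has the same defect.

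The Ketonen fallback does not repair this. Small forcing only preserves the ultrafilters on regular $\mu\geq\kappa_{i+1}$. Theorem~\ref{thm: Ketonen's charachterization for strong compactness} also needs $\kappa_i$-complete uniform ultrafilters on the regular cardinals in $[\kappa_i,\kappa_{i+1})$, which is exactly what the failed lifting was supposed to provide.

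\textbf{How the paper handles your concern.} It keeps the order you rejected and answers the worry with Mitchell-order preservation rather than reordering:
\begin{itemize}
\item It forces with $\po^{\tau_i,I_i}$ while $\kappa_i$ is still supercompact. By Lemma~\ref{lemma: the spaced splitting forcing preserves supercompactness}, $\kappa_i$ stays supercompact.
\item By Lemma~\ref{lemma: the splitting forcing preserves Mitchell order}, exactly $\tau_i$ of the resulting normal measures have Mitchell order $0$. These are the lifts of the unique order-$0$ measure, which concentrates on $I_i$.
\item $\mathbb{NR}(\kappa_{i-1},\kappa_i)$ then keeps exactly these measures by Lemma~\ref{lemma: normal measures after iterating NRs}. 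It turns supercompactness into strong compactness by Theorem~\ref{thm: Strong compactness after iterating nonreflecting stat sets} with $\lambda=\kappa_{i+1}$, inside a downward induction.
\end{itemize}

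If you want to keep your order, you would at least have to split on the ground-model set of inaccessible limits of strong cardinals, rather than on all inaccessibles. Then $j(I)\cap(\kappa^*,\lambda^*]=\emptyset$, and the tail above $\kappa^*$ is closed enough to build the generic in $M_1[\cdots]$.
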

\begin{proof}
We first make the supercompact cardinals $\kappa_0,\ldots, \kappa_{n-1}$ indestructible under sufficiently directed closed forcings which preserve cardinals and GCH. We do this while simultaneously ensuring that for each $i<n$, $\kappa_i$ carries a unique normal measure of Mitchell order $0$. To that end, denote for each $i<n$, $\rho_i := (\kappa_{i-1})^+$ (if $i=0$, $\rho_0 := 0$) and define a finite iterated forcing 
$$\mathbb{L} = \langle \mathbb{L}_i, \dot{\mathbb{L}}(\kappa_i, \rho_i)\colon i<n\rangle$$
where $\dot{\mathbb{L}}(\kappa_i,\rho_i)$ is the canonical $\mathbb{L}_i$-name for the $\rho_i$-directed closed closed forcing which makes the supercompactness of $\kappa_i$ indestructible under $(\kappa_i)^+$-directed closed forcings, while maintaining the same number of normal measures of Mitchell order $0$ on $\kappa_i$ as in $V^{\mathbb{L}_i}$ (see Notation~\ref{notation: Laver}).

\begin{claim}
    $\mathbb{L}$ preserves cardinals, $\rm{GCH}$, and the supercompactness of each $\kappa_i$.  Moreover, each $\kappa_i$ remains indestructible under $(\kappa_i)^+$-directed-closed forcings and carries a unique normal measure of Mitchell order $0$ after forcing with $\mathbb{L}$.
\end{claim}
\begin{proof}[Proof of claim]
The fact that $\mathbb{L}$ preserves cardinals and the GCH follows from it being a finite iteration of forcings with these properties. Therefore, we focus on the supercompactness and indestructibility of each $\kappa_i$. 

Let $G\subseteq \mathbb{L}$ be generic over $V$. Fix $i<n$. We verify that $\kappa_i$ remains supercompact in $V[G]$. Factor the forcing as
$\mathbb{L} = \mathbb{L}_i \ast \dot{\mathbb{L}(\kappa_{i+1},\rho_{i+1})} \ast \mathbb{L}\setminus (i+1)$.
Since $\mathbb{L}_i \in H(\kappa_i)$, forcing with $\mathbb{L}_i$ does not affect the supercompactness of $\kappa_i$, and hence $\kappa_i$ remains supercompact in $V[G_i]$. In addition, because $\kappa_i$ has in $V$ a unique normal measure of Mitchell order $0$, the same holds in $V[G_i]$.  As GCH remains true in $V[G_i]$, Theorem \ref{theorem: Laver indestructibility+GCH} applies. It follows that in $V[G_{i+1}]$, the cardinal $\kappa_i$ is supercompact, carries a unique normal measure of Mitchell order $0$, and is indestructible under $(\kappa_i)^+$-directed closed forcings.

Now observe that the tail forcing $\mathbb{L}\setminus (i+1)$ is $(\kappa_i)^+$-directed closed. By indestructibility, $\kappa_i$ therefore remains supercompact in $V[G]$. Furthermore, no new normal measures on $\kappa_i$ are added, so in $V[G]$ it carries exactly the same normal measures as in $V[G_{i+1}]$; in particular, it still has a unique normal measure of Mitchell order $0$.

It remains to verify that $\kappa_i$ is indestructible in $V[G]$. Let $\qo \in V[G]$ be a $(\kappa_i)^+$-directed closed poset. Then the two-step forcing $\mathbb{L}\setminus (i+1) \ast \dot{\qo}$ is itself $(\kappa_i)^+$-directed closed. Since $\kappa_i$ was already indestructible under such forcings in $V[G_i]$, it follows that forcing with $\qo$ over $V[G]$ preserves the supercompactness of $\kappa_i$. This concludes the proof of the claim.
\end{proof}

To simplify the notation, let us assume that $V$ is already a generic extension by $\mathbb{L}$, namely, assume that GCH holds in $V$, $\kappa_0,\ldots, \kappa_{n-1}$ are supercompact cardinals, each $\kappa_i$ is indestructible under $(\kappa_i)^+$-directed closed forcings and each $\kappa_i$ carries a unique normal measure of Mitchell order $0$. Consider the product forcing
$$\mathbb{R} = \textstyle \prod_{i<n}(\dot{\mathbb{P}}^{\tau_i,I_i}\ast \dot{\mathbb{NR}}{(\kappa_{i-1},\kappa_i)})$$ where
\begin{itemize}
    \item $I_i$ is the set of inaccessible cardinals in $(\kappa_{i-1},\kappa_i)$ that are limits of strong cardinals (here and below, we let $\kappa_{i-1} = 0$ for $i=0$).
    \item If $\tau_i \leq (\kappa_i)^+$, $\mathbb{P}^{\tau_i,I_i}$ is the spaced splitting forcing from Lemma \ref{lemma: the spaced splitting forcing preserves supercompactness}.
    \item If $\tau_i = (\kappa_i)^{++}$, $\po^{\tau_i, I_i}$ is the forcing from Lemma \ref{lemma: kappa^++ many normal measures}.
    \item $\dot{\mathbb{NR}}{(\kappa_{i-1},\kappa_i)}$ is the nonstationary support iterated forcing which adds a  nonreflecting stationary subset to every measurable cardinal in the interval $(\kappa_{i-1}, \kappa_i)$, as defined in section \ref{Section: Forcing NR}.
\end{itemize}

We argue that in $V^{\mathbb{R}}$, the cardinals $\kappa_0, \ldots, \kappa_{n-1}$ are strongly compact, there are no other measurable cardinals, and each $\kappa_i$ carries exactly $\tau_i$ normal measures. This will be proved in the following claim, which  concludes the proof of the theorem:
\begin{claim}
Let $i<n-1$. After forcing over $V$ with 
    $$\mathbb{R}_{\geq i} = \textstyle \prod_{j\geq i}(\dot{\mathbb{P}}^{\tau_j,I_j}\ast \dot{\mathbb{NR}}{(\kappa_{j-1},\kappa_j)}),$$
    all the cardinals $\kappa_{i}, \ldots, \kappa_{n-1}$ are strongly compact, there are no other measurable cardinals above $\kappa_{i-1}$, and each cardinal $\kappa_{j}$ for $j\geq i$ carries exactly $\tau_i$ normal measures.

\end{claim}
\begin{proof}[Proof of claim]
Following the proof of Apter--Cummings \cite{ApterCummingsStrongCompactness}, we perform a  downwards induction. We first establish the claim for $i = n-1$, and the inductive step consists of proving the claim for $i$, assuming it holds for $i+1$. The theorem follows from the case $i=0$.

    Let us first take care of $\kappa_{n-1}$ ($\kappa_{n-1}$ is the top cardinal). Consider the forcing $$\mathbb{R}_{\geq n-1} = \dot{\po}^{\tau_{n-1}, I_{n-1}} \ast \dot{\mathbb{NR}}(\kappa_{n-1}, \kappa_n)$$
    over $V$. 
    By Lemmas \ref{lemma: the spaced splitting forcing preserves supercompactness} and \ref{lemma: the splitting forcing preserves Mitchell order}, in $V^{\dot{\po}^{\tau_{n-1}, I_{n-1}}}$, GCH holds, $\kappa_{n-1}$ remains supercompact and carries exactly $\tau$ normal measures of Mitchell order $0$. Since there are no measurable cardinals above $\kappa_{n-1}$ in $V^{\dot{\po}^{\tau_{n-1}, I_{n-1}}}$, we can apply Theorem \ref{thm: Strong compactness after iterating nonreflecting stat sets when there are no measurable cardinals above} to deduce that in $V^{\mathbb{R}_{\geq n-1}}$, $\kappa_{n-1}$ is a strongly compact cardinal. By Lemma \ref{lemma: normal measures after iterating NRs}, $\kappa_{n-1}$ carries exactly $\tau_{n-1}$ normal measures in $V^{\mathbb{R}}$.

    Next, we take care of $\kappa_i$ for $i<n-1$. Consider the forcing
    $$\mathbb{R}_{\geq i} = \textstyle \prod_{j>i}(\dot{\mathbb{P}}^{\tau_j,I_j}\ast \dot{\mathbb{NR}}{(\kappa_{j-1},\kappa_j)}),$$
    and factor it to the form 
    $$ \mathbb{R}_{\geq i} = \mathbb{R}_{>i} \times \left( \dot{\po}^{\tau_{i}, I_{i}} \ast \dot{\mathbb{NR}}(\kappa_{i-1}, \kappa_{i}) \right). $$
    The follwing properties hold after forcing over $V$ with $\mathbb{R}_{>i}$:
    \begin{itemize}
        \item $\kappa_i$ is supercompact, since $\kappa_i$ is indestructible in  in $V$ under $(\kappa_i)^+$-directed closed forcings, and $\mathbb{R}_{>i}$ is such a forcing.
        \item $\kappa_{i+1}, \ldots, \kappa_{n-1}$ are strongly compact, there are no other measurable cardinals above $\kappa_i$, and each $\kappa_j$ (for $j>i$) carries exactly $\tau_j$ normal measures. This follows from the induction hypothesis.
        \item GCH holds, since $\mathbb{R}_{>i}$ preserves GCH as a finite product of forcings which preserve GCH.
        \item $\kappa_i$ has a unique normal measure of Mitchell order $0$, since this is the case in $V$, and  $\mathbb{R}_{>i}$ is a $(\kappa_{i})^{++}$-closed poset.
    \end{itemize}
    By Theorem \ref{lemma: the spaced splitting forcing preserves supercompactness}, all the above bulleted points, except for the last one, are carried on to $V^{\mathbb{R}_{>i}\times \dot{\po}^{\tau_{i}, I_i}}$. The last bulleted point is now replaced with $\kappa_i$ carrying exactly $\tau_i$ normal measures of Mitchell order $0$. Finally, by Theorem \ref{thm: Strong compactness after iterating nonreflecting stat sets}, in $V^{\mathbb{R}_{\geq i}}=V^{ \mathbb{R}_{>i}\times \left( \dot{\po}^{\tau_{i}, I_i} \ast \dot{\mathbb{NR}}(\kappa_{i-1}, \kappa_i) \right) }$, $\kappa_i$ becomes strongly compact and the least measurable cardinal above $\kappa_{i-1}$. Moreover, By Lemma \ref{lemma: normal measures after iterating NRs}, $\kappa_i$ has exactly $\tau_i$ normal measures. The forcing $\mathbb{P}^{\tau_i, I_i} * \dot{\mathbb{NR}}(\kappa_{i-1}, \kappa_i)$ is small relative to the cardinals $\kappa_{i+1}, \ldots, \kappa_{n-1}$. Hence their strong compactness, the fact that they are the only measurable cardinals above $\kappa_i$, and the number of normal measures they carry all remain unchanged from $V^{\mathbb{R}_{>i}}$. Thus, overall, we obtain that the following properties holds after forcing with $\mathbb{R}_{\geq i}$: $\kappa_i, \kappa_{i+1}, \ldots ,\kappa_{n-1}$ are strongly compact cardinals, there are no measurable cardinals above $\kappa_{i-1}$, and each $\kappa_j$ (for $j\geq i$) has exactly $\tau_j$ normal measures. This concludes the inductive argument.
\end{proof}
This concludes the proof. 
\end{proof}
hree-step iteration consisting of: (1) the poset $\mathbb{L}_i$ that makes the supercompactness of $\kappa_i$ indestructible under $(2^{\kappa_i})^+$-directed-closed forcing; (2) the splitting poset $\mathbb{P}^{\tau_i}$ from page~\pageref{page: splitting forcing} starting the iteration past $(2^{\kappa_i})^+$; (3) the nonstationary support iterations which adds a non-reflecting stationary set without fixed cofinality to every measurable in $(\kappa_{i-1},\kappa_i)$.

\subsection{The first measurable above a supercompact cardinal}

We now turn to the least measurable cardinal above a supercompact cardinal. The Friedman–Magidor technique \cite{FriedmanMagidor} does not allow one to control the number of normal measures on such a cardinal, since it is carried out over $L[U]$.\footnote{More precisely, the Friedman–Magidor construction is performed over an arbitrary model of GCH; however, the first step is a class-forcing iteration coding the universe by a real, thereby transferring it to $L[U,r]$ for some real $r$. This procedure destroys supercompact cardinals.} However, the splitting forcing can be used to control the number of normal measures on the least measurable cardinal above an indestructible supercompact cardinal.
\begin{theorem}\label{thm: above supercompact}
    Assume the $\mathrm{GCH}$ holds,  $\kappa$ is a supercompact cardinal, and $\lambda$ is the first measurable cardinal above $\kappa$. In addition, suppose that $\lambda$ has a unique normal measure. Then, for each cardinal $\tau\leq \lambda^{++}$, there is a forcing extension  where the following hold:
    \begin{enumerate}
        \item  $\kappa$ is supercompact;
        \item  $\lambda$ is measurable;
        \item $\lambda$ carries exactly $\tau$ normal measures.
    \end{enumerate}
\end{theorem}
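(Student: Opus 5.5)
We will first make $\kappa$ indestructible and then force with the splitting forcing at $\lambda$. Concretely, force with the nonstationary support Laver preparation of Theorem \ref{theorem: Laver indestructibility+GCH}, i.e.\ $\mathbb{L}(\kappa,0)$. This forcing has size $\kappa^+<\lambda$. Hence, by Lévy--Solovay, in $V^{\mathbb{L}}$ the normal measures on $\lambda$ are exactly those generated by normal measures of $V$. So $\lambda$ still has a unique normal measure, $\lambda$ is still the first measurable above $\kappa$, and GCH and all cardinals are preserved. In the extension, $\kappa$ is supercompact and indestructible under $\kappa^+$-directed closed forcings that preserve cardinals and GCH. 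From now on we rename $V^{\mathbb{L}}$ as $V$.

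Next, let $I$ be the set of all inaccessible cardinals in the interval $(\kappa,\lambda)$, or in $(\kappa^+,\lambda)$ to be safe. The set $I$ is stationary in $\lambda$ because $\lambda$ is measurable. Since $\lambda$ has a unique normal measure $U$, which therefore has Mitchell order $0$, and since $U$ concentrates on inaccessibles above $\kappa$, we get $I\in U$.
\begin{enumerate}
\item If $\tau\le\lambda^+$, force with $\po^{\tau,I}$ at $\lambda$.
\item If $\tau=\lambda^{++}$, force with the variant of Lemma \ref{lemma: kappa^++ many normal measures} at $\lambda$: add a Cohen subset to each $\alpha\in I\cup\{\lambda\}$ with nonstationary support, but without the assumption that $\lambda$ is supercompact.
\end{enumerate}
In case (1), Theorem \ref{thm: properties of the splitting forcing} applied with $\lambda$ in place of $\kappa$ gives that $\lambda$ remains measurable, and that every normal measure in the extension is $U^*_\eta$ for some $\eta<\tau$. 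So there are exactly $\tau$ normal measures. In case (2), the counting part of the proof of Lemma \ref{lemma: kappa^++ many normal measures} does not use supercompactness. It only uses the lift of $j_U$ via a master condition, together with gap forcing to show that every normal measure extends $U$. It yields exactly $\lambda^{++}$ many normal measures, since there are at most $2^{2^\lambda}=\lambda^{++}$ under GCH.

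It remains to show that $\kappa$ stays supercompact. The iteration $\po^{\tau,I}$ is $I$-spaced and every coordinate is above $\kappa^+$, so it is $\kappa^{++}$-directed closed. Indeed, directed families of size $\le\kappa^+$ have lower bounds coordinatewise, because each atomic iterand is trivially directed closed on directed sets (a directed set of atoms is a single atom). The union of fewer than $\kappa^{++}$ nonstationary supports is still nonstationary in inaccessibles, because every inaccessible in $I$ is above $\kappa^{++}$. The Cohen variant is handled the same way. By Lemma \ref{Lemma: nonstatsupport iterations preserving GCH}, the iteration preserves cardinals and GCH. Indestructibility then gives that $\kappa$ is supercompact in the final model.

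I expect the main obstacle to be verifying the directed closure hypothesis precisely. Closure of nonstationary support limits under directed systems needs the care sketched above: choose $I$ above $\kappa^{++}$ so that small unions of supports stay nonstationary. A second point needing care is that in case (2) the forcing at stage $\lambda$ itself is $\lambda$-directed closed and preserves GCH, which is immediate. If indestructibility caused trouble, a fallback is to argue directly as in Lemma \ref{lemma: the spaced splitting forcing preserves supercompactness}: lift a $\mu$-supercompactness embedding for $\mu\ge\lambda$, using that $j(\po^{\tau,I})=\po^{\tau,I}$-like forcing above $j(\kappa)$ is small relative to $j(\kappa)$. A simpler route is available here, though. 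Since $\po^{\tau,I}$ lives strictly above $\kappa$ and is $\kappa^{+}$-directed closed, indestructibility alone suffices, and this is exactly why the first step is needed.
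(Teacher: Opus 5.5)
Your proposal is correct and follows the paper's proof. You first apply a Laver preparation at $\kappa$, which by L\'evy--Solovay keeps $U$ as the unique normal measure on $\lambda$. You then force with the splitting forcing $\mathbb{P}^{\tau,I}$, with $I$ the inaccessibles in $(\kappa,\lambda)$, when $\tau\le\lambda^+$, or with a Cohen iteration on the inaccessibles in $(\kappa,\lambda]$ when $\tau=\lambda^{++}$; indestructibility then keeps $\kappa$ supercompact.

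Your explicit check that these forcings are $\kappa^{++}$-directed closed fills in a step the paper leaves implicit: a directed set of atoms has a lower bound, and fewer than $\kappa^{++}$ supports have nonstationary union because every relevant inaccessible exceeds $\kappa^{++}$. Your use of nonstationary rather than Easton support in the $\lambda^{++}$ case makes no difference.
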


\begin{proof}
    Let $\mathbb{L}$ be the Laver preparation rendering the supercompactness of $\kappa$ indestructible under $\kappa$-directed closed forcing (here one may use either Theorem~\ref{theorem: Laver indestructibility} or Laver’s original construction in \cite{Lav}, since we are not concerned with controlling the measures on $\kappa$. If, in addition, one wishes to preserve cardinals and GCH after forcing with $\mathbb{L}$—a feature that may be of independent interest—one may instead apply Theorem~\ref{theorem: Laver indestructibility+GCH}). Let $G\subseteq \mathbb{L}$ be generic over $V$. By the L\'evy–Solovay Theorem (see \cite{levysolovay1967}), $\lambda$ remains measurable in $V[G]$ and carries a unique normal measure $U$, necessarily of trivial Mitchell rank.

If $\tau=(\lambda^{++})^{V[G]}$, force with the Easton support iteration adding a single Cohen subset to every inaccessible cardinal in the interval $(\kappa,\lambda]$ (alternatively, apply Lemma~\ref{lemma: kappa^++ many normal measures}, replacing $(\kappa_0,\kappa)$ in the statement of the lemma with $(\kappa,\lambda)$). This blows up the number of normal measures on $\lambda$ to $(\lambda^{++})^{V[G]}$, while preserving the supercompactness of $\kappa$.

If instead $\tau\leq (\lambda^{+})^{V[G]}$, use the splitting forcing $\mathbb{P}^{\tau,I}$, where $I\subseteq \kappa$ is the set of inaccessible cardinals above $\lambda$. By Theorem~\ref{thm: properties of the splitting forcing}, if $H\subseteq \mathbb{P}^{\tau,I}$ is generic over $V[G]$, then every normal measure $W\in V[G][H]$ on $\lambda$ is of the form $U^*_\eta$ for some $\eta<\tau$.
\end{proof}

\subsection{The first measurable limit of supercompact cardinals}

In this section, we study the number of normal measures on the least cardinal that is a measurable limit of supercompact cardinals. Let us first explain why this cardinal is of interest.

By a well-known theorem of Menas, every measurable cardinal that is a limit of strongly compact cardinals is strongly compact (see \cite[Theorem 2.21]{menas1974strong}). In a recent landmark result, Goldberg proved that, under the Ultrapower Axiom, every strongly compact cardinal that is not supercompact is a measurable limit of supercompact cardinals (see \cite[Theorem 8.3.10]{goldbergtheultrapoweraxiom}). It follows that, under UA, Menas’ theorem provides essentially the only way to distinguish between strongly compact and supercompact cardinals.

Assuming the UA or the linearity of the Mitchell order, the least cardinal $\kappa$ that is a measurable limit of supercompact cardinals  
is a strongly compact cardinal which carries a unique normal measure (hence, it cannot be supercompact). Indeed, if $\kappa$ carried two distinct normal measures, then one of them would have nonzero Mitchell order. Consequently, the fact that $\kappa$ is measurable and a limit of supercompact cardinals would remain true in the corresponding ultrapower. In particular, this property would reflect to a smaller cardinal, contradicting the minimality of $\kappa$.

The next theorem shows how to control the number of normal measures on the least measurable limit of supercompact cardinals. 

Let us remark that the proof technique below can be applied as well to the least measurable limit of strong cardinals. By a Theorem of Hamkins (see \cite[Corollary 2.7]{HamkinsTall}), measurable cardinals which are limits of strong cardinals are tall cardinals. The number of normal measures on tall cardinals was extensively studied in \cite{ApterCummingsTall}. Theorem \ref{thm: first limit of supercompacts} provides alternative proofs of some of the results obtained there.

\begin{theorem}\label{thm: first limit of supercompacts} 
  Assume the $\mathrm{GCH}$ holds, and  suppose that $\kappa$ is the first measurable limit of supercompact (strong) cardinals. Assume that $\kappa$ carries a unique normal measure of Mitchell order 0. Let $\tau\leq \kappa^{++}$ be a cardinal. Then, there is  a generic extension where $\kappa$ remains the first measurable limit of supercompact (strong) cardinals, and $\kappa$ carries exactly $\tau$ normal measures. 
\end{theorem}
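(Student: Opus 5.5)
Our plan is to force with the splitting forcing at $\kappa$ itself, choosing the stationary set $I$ so that the supercompact (strong) cardinals below $\kappa$ and the measurability of $\kappa$ survive. Concretely, let $S$ be the set of supercompact (strong) cardinals below $\kappa$. Since $\kappa$ is the first measurable limit of such cardinals, no measurable $\alpha<\kappa$ is a limit point of $S$. We take $I$ to be the set of inaccessible limit points of $S$ below $\kappa$, which is stationary (indeed a member of every normal measure on $\kappa$, being measure-one for the unique order-$0$ measure $U$). For $\tau\le\kappa^+$ we force with $\po^{\tau,I}$. For $\tau=\kappa^{++}$ we force with the variant from Lemma~\ref{lemma: kappa^++ many normal measures}, which adds Cohen subsets at the points of $I\cup\{\kappa\}$. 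We first observe, as explained just before the theorem, that under our hypothesis $\kappa$ carries a unique normal measure $U$. Indeed, any other normal measure would have nonzero Mitchell order, so it would contain $U$ in its ultrapower, and hence the set of measurable limits of $S$ below $\kappa$ would be $U$-large, contradicting minimality. Thus $I\in U$.

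\textbf{Preservation of the large cardinals below $\kappa$.} Fix $\delta\in S$. Below $\delta$ the iteration is a set forcing of size $<\delta$ (the support in $I\cap\delta$ is bounded below $\delta$, since $\delta\notin I$ is not a limit of points of $I$ unless $\delta$ is a limit of $S$). In that case $\delta$ is not Mahlo-relevant; more carefully, $\delta$ is not measurable-limit, so we factor $\po=\po_{\delta}\ast\po\setminus\delta$. Here $\po_\delta$ has size $<\delta$ if $\sup(I\cap\delta)<\delta$, and in general $\po_\delta$ is small relative to the next element of $S$. The tail $\po\setminus(\delta+1)$ is $\min(I\setminus\delta+1)$-strategically closed, and $\min(I\setminus(\delta+1))$ is above the next member of $S$ above $\delta$. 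We therefore argue cofinally: every $\delta\in S$ that is a successor point of $S$ is preserved by Lévy–Solovay for the small head, plus the high strategic closure of the tail. Since supercompactness (strongness) of $\delta$ only needs witnesses below the next point of $I$, and points of $S$ are cofinal, the closure of the tail suffices for $\lambda$-supercompactness for $\lambda$ below $\min(I\setminus\delta)$. Beyond that point we use the standard argument that a limit of $\lambda$-supercompacts in $V$... This is the step I expect to be delicate. The alternative I would try first is to lift embeddings $j\colon V\to M$ witnessing $\lambda$-supercompactness of $\delta$ with $\lambda<\kappa$ arbitrary: $j(\po)$ agrees with $\po$ up to $\delta$, is trivial at $\delta$, and is $\lambda^+$-strategically closed on $(\delta,j(\delta))$ in $M$ if we choose $j$ of minimal Mitchell rank, so that no point of $j(I)$ lies in $(\delta,\lambda]$. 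We then build a master condition generic over $M$ as in Lemma~\ref{lemma: the spaced splitting forcing preserves supercompactness}. The part $(j(\delta),j(\kappa))$ is not needed, since we only lift $j\restriction V_\kappa$, so we may work with $V_\kappa$-level embeddings. Hence all $\delta\in S$ stay supercompact (strong) in $V_\kappa^{V[G]}$, which is all that "limit of supercompacts below a measurable" requires, because $\kappa$ is inaccessible.

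\textbf{Counting the measures and minimality.} By Theorem~\ref{thm: properties of the splitting forcing}, every normal measure on $\kappa$ in $V[G]$ is some $U^*_\eta$ with $\eta<\tau$, since $U$ is the only ground measure and $I\in U$. Hence $\kappa$ carries exactly $\tau$ normal measures, and $\kappa$ remains a measurable limit of $S$. For $\tau=\kappa^{++}$, Lemma~\ref{lemma: kappa^++ many normal measures}(3) gives $\kappa^{++}$ lifts, and the Gap Forcing Theorem shows there are no others. For minimality, suppose $\alpha<\kappa$ were a measurable limit of supercompacts (strongs) in $V[G]$. The forcing has a gap low down, so by Hamkins' theorem $\alpha$ is measurable in $V$, and supercompacts (strongs) of $V[G]$ above the gap were already such in $V$. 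Thus $\alpha$ would be a measurable limit of $S$ in $V$, contradicting the choice of $\kappa$. We also check via Lemma~\ref{Lemma: nonstatsupport iterations preserving GCH} that GCH and cardinals are preserved. The main obstacle, as noted, is the uniform lifting argument for the supercompact (strong) cardinals in $S$ that are limits of $S$ without being in $I$, i.e.\ those of cofinality or Mahloness below the points of $I$. There the minimal-rank choice of $j$ is what keeps $j(I)\cap(\delta,\lambda]=\emptyset$.
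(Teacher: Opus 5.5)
\textbf{There is a genuine gap exactly at the step you flag as delicate.} Nothing in the proposal shows that the cardinals of $S$, or even cofinally many of them, remain \emph{fully} supercompact (strong) in $V[G]$.

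\begin{enumerate}
\item Lévy--Solovay for the small head $\po_\delta$, together with the strategic closure of $\po\setminus(\delta+1)$, only gives $\lambda$-supercompactness of $\delta$ for $\lambda$ roughly below $\min(I\setminus(\delta+1))$. The tail adds subsets of the points of $I$ above $\delta$ and of $\kappa$. Strategic closure above a non-indestructible supercompact is not enough to protect it: for regular $\theta>\delta$, adding a nonreflecting stationary subset of $\theta\cap\mathrm{cof}(\omega)$ is $\theta$-strategically closed, yet it kills the $\theta$-supercompactness of $\delta$.
\item Your fallback is false. ``Limit of supercompact cardinals'' in $V[G]$ requires full supercompactness. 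Being supercompact in $V_\kappa^{V[G]}$, i.e.\ ${<}\kappa$-supercompact, is strictly weaker: if $\theta$ is the least supercompact, reflection yields measurables $\kappa<\theta$ that are limits of ${<}\kappa$-supercompact cardinals, none of which is supercompact.
\item Even for $\lambda<\kappa$, your lifting sketch puts the work in the wrong interval. Since $I\cap\delta$ is bounded in $\delta$, $j(\po)$ has no active stages in $[\delta,j(\delta))$, whatever the Mitchell rank of $j$. The stages of $I$ above $\delta$ reappear above $j(\delta)$. There you need a master condition for $j''$ of the generic above $\delta$, and that filter is neither absorbed anywhere in $j(\po)$ below $j(\delta)$ nor a member of $M[G_\delta]$. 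Nothing in the proposal supplies such a condition.
\item The obstacle you single out, namely $\delta\in S$ that are limits of $S$, does not exist. Such a $\delta$ would be a measurable limit of supercompacts below $\kappa$, contradicting minimality.
\end{enumerate}

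\textbf{The missing idea is to build indestructibility into the iteration itself.} The paper forces with $\Add(\omega,1)\ast\po$, where $\po$ is $I$-spaced with the same $I$ as yours (the inaccessible limits of $S$), and splits the active stages into two kinds.
\begin{itemize}
\item At limit points of $I$ it uses the atomic splitting stage $\{\one\}\cup f_\tau(\alpha)$. These points still form a $U$-measure-one set, since $\kappa$ is a limit point of $j_U(I)$. So the count of exactly $\tau$ measures goes through by the same lifting and gap-forcing analysis you describe (Lemma~\ref{Lemma: How embeddings lift after the splitting forcing} and Theorem~\ref{thm: Gap forcing theorem}).
\item At each non-limit point $\alpha\in I$ it forces with an $\alpha^+$-directed closed Laver (resp.\ Gitik--Shelah) preparation. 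This makes $\lambda_\alpha$, the least supercompact (strong) cardinal above $\alpha$, indestructible under $\lambda_\alpha^+$-directed closed forcing.
\end{itemize}
Since $\min(I\setminus(\alpha+1))>\lambda_\alpha$, everything after stage $\alpha$ is $\lambda_\alpha^+$-directed closed, so each $\lambda_\alpha$ survives. These $\lambda_\alpha$ are cofinal in $\kappa$, which is all the theorem needs; the paper does not try to preserve all of $S$. The initial Cohen real gives a gap at $\omega_1$, which makes your minimality argument via Hamkins' theorem go through cleanly.
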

\begin{proof}
  Let $\kappa$ be the first measurable limit of supercompact (strong) cardinals. Denote by $I$ the set of all inaccessible cardinals below $\kappa$ that are limits of supercompact (strong) cardinals. Our proof proceeds by forcing with a two-step iteration $\mathbb{Q}=\Add(\omega,1)\ast\mathbb{P}$, where $\mathbb{P}$ is an $I$-spaced nonstationary support iterated forcing ($\po$ will be formally defined below), so that $\mathbb{Q}$ exhibits a gap at $\omega_1$. In particular, forcing with $\mathbb{Q}$ does not create new supercompact, strong or measurable cardinals, by Hamkins' Gap Forcing Theorem (Theorem \ref{thm: Gap forcing theorem}). Thus, if $\mathbb{P}$ preserves the fact that $\kappa$ is a measurable limit of supercompact (strong) cardinals, then it also preserves that $\kappa$ is the least such cardinal.

   Assume first that $\tau\leq \kappa^{+}$. Let $\mathbb{P}=\langle \mathbb{P}_\alpha,  \dot{\mathbb{Q}}_\alpha\colon \alpha<\kappa\rangle$ be the $I$-spaced nonstationary support iteration where for each $\alpha\in I$ the $\mathbb{P}_\alpha$-name  $\dot{\mathbb{Q}}_\alpha$ is taken as follows:
   \begin{enumerate}
       \item If $\alpha\in I$ is a limit point of $I$, $\dot{\qo}_\alpha$ is a $\mathbb{P}_\alpha$-name for the atomic forcing  $\{\one\}\cup f_\tau(\alpha)$ (here $f_\tau$ is the canonical function associated with $\tau$, and $\dot{\qo}_\alpha$ is as in the definition of the Splitting forcing in Section \ref{Section: splitting forcing}).
       \item If $\alpha\in I$ is not a limit point of $I$, let $\lambda_\alpha$ be the least supercompact (strong) cardinal strictly above $\alpha$, and let $\dot{\qo}_\alpha$ be the $\po_\alpha$-name for the Laver preparation $\mathbb{L}(\alpha^+, \lambda_\alpha)$ from Notation \ref{notation: Laver} (for the ``strong" version of the theorem, $\dot{\qo}_\alpha$ is taken to be $\mathbb{GS}(\alpha^+, \lambda_\alpha)$ from Notation \ref{notation: Laver}).
   \end{enumerate}

    Let $G\subseteq \qo$ be generic over $V$. 

    We first argue that $\kappa$ remains a limit of supercompact (strong) cardinals in $V[G]$. It suffices to prove that, for every cardinal $\alpha\in I$ which is not a limit point of $I$, $\lambda_\alpha$ remains supercompact (strong) cardinal in $V[G]$. Indeed, factor
    $$\qo = {\rm{Add}}(\omega,1) \ast \mathbb{P}_{\alpha} \ast \dot{\qo}_\alpha\ast  \mathbb{P}\setminus (\alpha+1)$$
    and note that ${\rm{Add}}(\omega,1) \ast \mathbb{P}_{\alpha} $ preserves the supercompactness (strongness) of $\lambda_\alpha$ as it is a small forcing relative to $\lambda_\alpha$; $\dot{\qo}_\alpha$ preserves the supercompactness (strongness) of $\lambda_\alpha$ and makes it indestructible under $(\lambda_\alpha)^+$-directed closed forcings; and $\po\setminus (\alpha+1)$ is $(\lambda_\alpha)^+$-directed closed since $\min(I\setminus (\alpha+1)) > \lambda_\alpha$. Hence $\lambda_\alpha$ remains a supercompact (strong) cardinal in $V[G]$.

    Next, we argue that $\kappa$ remains measurable and carries exactly $\tau$ normal measures in $V[G]$. By the remarks preceding the theorem, $\kappa$ carries a unique normal measure $U$ in $V$. We now aim to invoke Lemma~\ref{Lemma: lifting elementary embeddings with fusion} to lift the embedding $j_U\colon V\to M_U$ to $V[G]$. Before doing so, we verify that the assumptions of the lemma are satisfied. For Clause~$(\beth)$, observe that $\kappa$ is the sole generator of $j_U$ and is therefore bounded by $j_U(\alpha\mapsto\alpha^+)(\kappa)=\kappa^+$, which in turn is smaller than $\min(j_U(I)\setminus (\kappa+1))$.
    
    Since $\kappa\in j_U(I)$, the same argument as in Lemma~\ref{Lemma: How embeddings lift after the splitting forcing} shows that $j_U\colon V\to M_U$ lifts to $V[G]$ in exactly $\tau$ ways. More precisely, the embeddings $j^*\colon V[G]\to M^*$ extending $j_U$ are of the form
    $$j^*_\eta\colon V[G]\rightarrow M_U[G\ast \{\langle \kappa,\eta\rangle\}\ast (j_{U}[G]\setminus \kappa+1)]$$
    for some $\eta<\tau$, and each $j^*_\eta$ induces a distinct normal measure $U^*_\eta$ on $\kappa$ in $V[G]$. 

    On the other hand, suppose that $W\in V[G]$ is an arbitrary normal measure in $\kappa$. By the Gap Forcing Theorem, $W$ lifts a normal measure $U\in V$. Since $\kappa$ is a measurable limit of supercompact (strong) cardinals in $V$, $I\in U$. In particular $I\in W$ and thus $\kappa\in j_W(I)$. Arguing exactly as in the last paragraph of the proof of Theorem~\ref{thm: properties of the splitting forcing} we conclude that $W=U^*_\eta$ for some $\eta<\tau$. Thereby, $\kappa$ carries exactly $\tau$ normal measures in $V[G]$.

    This concludes the proof of the theorem for $\tau\leq \kappa^+$. If $\tau = \kappa^{++}$, modify $\po$ to an iterated forcing of length $\kappa+1$, such that, for every $\alpha\in I\cup \{\kappa\}$ which is a limit point of $I$, $\dot{\qo}_\alpha$ is forced to be $({\rm{Add}}(\alpha,1))^{V^{\po_\alpha}}$. Standard arguments show that $\kappa$ remains measurable and carries $\kappa^{++}$ normal measures in $V[G]$. The same argument as above shows that $\kappa$ is a limit of supercompact (strong) cardinals in $V[G]$.
\end{proof}

\begin{remark}\label{Remark: adding GCH to the theorem about measurable limit of supercompacts}
    In Theorem \ref{thm: first limit of supercompacts} we can obtain GCH in the generic extension by using the version of the indestructibility theorem under forcings which preserve GCH (\ref{theorem: Laver indestructibility+GCH}).
\end{remark}

\subsection{A generalization of the Goldberg--Woodin Theorem}

One of the main results in \cite{GitikKaplan} is the proof that, assuming the consistency of the Ultrapower Axiom\footnote{As mentioned above, weaker consequences of UA suffice for the proof.} with a supercompact cardinal, it is consistent that the least measurable cardinal is strongly compact and has a unique normal measure. The proof was performed over a model of the Ultrapower Axiom in which there is a supercompact cardinal, and there are no measurable cardinals above it (this configuration could be achieved by cutting the universe at the least measurable cardinal above the least supercompact cardinal). The first author asked whether the anti-large cardinal component of the proof can be omitted. In an unpublished work prior to \cite{GitikKaplan}, Goldberg and Woodin proved  the same result without relying on any anti-large cardinal assumption, starting from a model of UA with a  measurable limit of supercompact cardinals. Building on the Goldberg--Woodin approach, we prove that under the same assumption, the first measurable cardinal can be strongly compact and have any prescribed number of normal measures, without appealing to anti-large cardinal assumptions in the proof.

\begin{theorem}\label{thm: Goldberg-Woodin}
    Assume the {\rm{GCH}} holds, $\kappa$ is a  measurable limit of supercompact cardinals, $\tau\leq \kappa^{++}$, and $\kappa$ has a unique normal measure of Mitchell order 0.\footnote{If $\kappa$ is the least measurable limit of supercompact cardinals and the Mitchell order is linear, then, by its minimality, $\kappa$ carries a unique normal measure of order 0. Thus, under UA, the least measurable limit of supercompact cardinals fulfills the assumptions in the theorem.} Then there is a generic extension where $\kappa$ is the least measurable cardinal, the least strongly compact cardinal, and $\kappa$ carries exactly $\tau$ normal measures.
\end{theorem}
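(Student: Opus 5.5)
We first apply Theorem~\ref{thm: first limit of supercompacts}, in its version with GCH (Remark~\ref{Remark: adding GCH to the theorem about measurable limit of supercompacts}), or rather its proof, without the minimality assumption. Let $I$ be the set of inaccessible limits of supercompact cardinals below $\kappa$. We force with the $I$-spaced iteration used there. At non-limit points $\alpha\in I$ we force with $\mathbb{L}(\alpha^+,\lambda_\alpha)$, and at limit points of $I$ with the splitting coordinate (or $\mathrm{Add}(\alpha,1)$ when $\tau=\kappa^{++}$). This makes cofinally many supercompacts $\lambda_\alpha<\kappa$ indestructible under $\lambda_\alpha^+$-directed closed forcing that preserves cardinals and GCH. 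We only need to control normal measures of Mitchell order $0$ on $\kappa$. So we first intersect with a measure-one set: since $\kappa$ has a unique order-$0$ normal measure $U$, the set $I'$ of $\alpha\in I$ with no normal measure on $\alpha$ concentrating on $I$ satisfies $I'\in U$. Then $U^*_\eta$, $\eta<\tau$, are exactly the lifts of $U$, and the normal measures on $\kappa$ in the extension are lifts of ground-model measures by the Gap Forcing Theorem. Measures of higher order will be killed in the last step.

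The second step adds nonreflecting stationary sets, in the spirit of $\mathbb{NR}(\kappa_0,\kappa)$ and Lemma~\ref{lemma: normal measures after iterating NRs}. We use a nonstationary support iteration adding a nonreflecting stationary set to every measurable $\alpha<\kappa$. By Lemma~\ref{lemma: normal measures after iterating NRs}, the normal measures on $\kappa$ in the final model are exactly those generated by order-$0$ normal measures from the intermediate model. By the first step these are the $\tau$ measures $U^*_\eta$: an order-$0$ measure there restricts (gap forcing together with Lemma~\ref{lemma: the splitting forcing preserves Mitchell order}) to an order-$0$ measure, namely $U$. Clause~(2) of Theorem~\ref{thm: Strong compactness after iterating nonreflecting stat sets} shows that no measurables remain below $\kappa$; with $\kappa_0=0$ this covers all of $(0,\kappa)$. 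Hence $\kappa$ is the least measurable, and therefore also the least strongly compact cardinal once we know it is strongly compact.

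The main obstacle is strong compactness of $\kappa$ without any anti-large-cardinal hypothesis above $\kappa$. The supercompacts below $\kappa$ are destroyed, so Menas' theorem cannot be applied directly. Our plan is to use Ketonen's characterization (Theorem~\ref{thm: Ketonen's charachterization for strong compactness}) together with the Goldberg--Woodin idea of interleaving. We arrange the NR iteration in blocks $(\lambda_\alpha,\lambda_{\alpha'})$ between consecutive prepared supercompacts. Each block's tail is $\lambda_\alpha^+$-directed closed and preserves GCH (Lemma~\ref{lemma: properties of NR(kappa0,kappa)}, clause~2). For $\alpha$ in $I$, we consider $\lambda_\alpha$ in the model obtained by forcing the portion of the NR iteration above $\lambda_\alpha$ first. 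There $\lambda_\alpha$ is still supercompact by indestructibility, and then the part below $\lambda_\alpha$ is applied. Running the argument of Theorem~\ref{thm: Strong compactness after iterating nonreflecting stat sets}, with the iterated ultrapower by an order-$0$ measure on $\lambda_\alpha$ followed by a supercompactness measure, should show that $\lambda_\alpha$ stays $\mu$-strongly compact up to the next measurable $\mu$ (the role of $\lambda$ there). In particular, for each regular $\mu\in[\lambda_\alpha,\kappa)$ there is a $\lambda_\alpha$-complete uniform ultrafilter on $\mu$.

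Given this, we argue as follows. For a regular $\mu\geq\kappa$, we take $\kappa$-complete uniform ultrafilters obtained by integrating the uniform ultrafilters available from the $\lambda_\alpha$'s along $U^*_0$: we form the $U^*_0$-limit of the $\lambda_\alpha$-complete uniform ultrafilters on $\mu$ as $\alpha$ ranges over $I$. Here we need that $\lambda_\alpha$ remains $\mu$-strongly compact for large $\mu$, which requires the forcing above $\lambda_\alpha$ to be mild enough that Ketonen ultrafilters on $\mu>\kappa$ from $V$ still generate, as in the last paragraph of the proof of Theorem~\ref{thm: Strong compactness after iterating nonreflecting stat sets}. This measure-limit construction is essentially Menas' argument carried out in the extension. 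We expect that verifying that the $\lambda_\alpha$'s retain enough strong compactness through the entire NR iteration, interleaved with the Laver preparations, will be the hardest point.
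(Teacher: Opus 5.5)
Your first step (the preparation from Theorem~\ref{thm: first limit of supercompacts}) matches the paper's reduction, and your bookkeeping of normal measures (lifts of the order-$0$ measure $U$ survive an $\mathbb{NR}$ iteration, higher-order ones die) is plausible. The gap is the strong compactness of $\kappa$, which you rightly flag as the main obstacle. The route you propose for it cannot work.

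Your iteration adds a nonreflecting stationary set to every measurable below $\kappa$. This includes each $\lambda_\alpha$ and the next measurable above it. So in the final model $\lambda_\alpha$ is not even weakly compact, since weakly compact cardinals reflect stationary sets, and in particular it is not $\mu$-strongly compact for any $\mu$.

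Theorem~\ref{thm: Strong compactness after iterating nonreflecting stat sets} does not give what you want. It protects only the cardinal at the top of the $\mathbb{NR}$ iteration, which itself receives no nonreflecting set. It also works only up to the next measurable $\lambda$, which must survive. Your iteration violates both conditions. Note also that stage $\lambda_\alpha$ is neither "above" nor "below" $\lambda_\alpha$, so indestructibility does not cover it.

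Worse, the Menas-style limit is circular. If $F$ is a $\lambda_\alpha$-complete uniform ultrafilter on a regular $\mu\ge\lambda_\alpha$, then the completeness of $F$ is a measurable cardinal in $[\lambda_\alpha,\mu]$. Since you arrange that there are no measurables below $\kappa$, such $F$ do not exist for $\mu<\kappa$, and for $\mu\ge\kappa$ they are already $\kappa$-complete. So the inputs to your $U^*_0$-limit are exactly the $\kappa$-complete uniform ultrafilters that Ketonen's criterion asks you to produce.

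What is missing is a mechanism by which strong compactness embeddings for $\kappa$ survive the destruction of every large cardinal below it. The paper does not use $\mathbb{NR}$ here at all. After Theorem~\ref{thm: first limit of supercompacts} and Remark~\ref{Remark: adding GCH to the theorem about measurable limit of supercompacts} give GCH and exactly $\tau$ normal measures on $\kappa$, the paper forces with the Magidor (full support) iteration of Prikry forcings at all measurables below $\kappa$. By Magidor's identity-crisis analysis (\cite{MagidorIdentity}, \cite{Gitik-handbook}), $\kappa$ becomes the least measurable and stays strongly compact. The point is that a Prikry sequence at a measurable $\delta$ can be realized as the critical sequence of an iterated ultrapower. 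So the relevant embeddings lift even though the supercompacts below $\kappa$ are singularized. The number of normal measures is then unchanged by \cite[Theorem 0.1]{KaplanMagidorIterations}.

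Nonreflecting stationary sets admit no such absorption. In a Menas-type decomposition $j'\circ j_U$, the supercompactness factor $j'$ has critical point some $\delta<j_U(\kappa)$ at which $j_U(\mathbb{NR})$ adds a nonreflecting stationary $S\subseteq\delta$. The factor $j'$ cannot be lifted past that generic, since $j'(S)$ would reflect at $\delta$. This is why the paper applies $\mathbb{NR}$ only below genuinely supercompact cardinals. There the supercompactness factor has critical point exactly at the top of the iteration, as in the proof of Theorem~\ref{thm: Strong compactness after iterating nonreflecting stat sets}.
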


\begin{proof}
    By Theorem \ref{thm: first limit of supercompacts} and Remark \ref{Remark: adding GCH to the theorem about measurable limit of supercompacts}, we can assume that GCH holds, $\kappa$ is the least measurable limit of supercomapct cardinals, and $\kappa$ has $\tau$ normal measures. Next, perform a Magidor iteration $\po = \la \po_\alpha, \dot{\qo}_\alpha \colon \alpha<\kappa \ra$ that adds a Prikry sequence to any measurable cardinal below $\kappa$ (including the supercomapct cardinals below $\kappa$). In the resulting model, $\kappa$ is the least measurable cardinal and a strongly compact cardinal (see \cite{MagidorIdentity} or \cite{Gitik-handbook} for more details on the Magidor iteration of Prikry forcings and the preservation of the strongly compact cardinal). Finally, by \cite[Theorem 0.1]{KaplanMagidorIterations}, $\kappa$ has exactly $\tau$ normal measures in the generic extension.
\end{proof}

\section{A comment on Woodin's HOD hypothesis}

Woodin’s $\HOD$ hypothesis asserts that there is a proper class of regular cardinals that are not $\omega$-strongly measurable in $\HOD$. It is motivated by Woodin’s landmark $\HOD$ Dichotomy Theorem:

\begin{theorem}[Woodin \cite{midrasha}]
Let $\kappa$ be an extendible cardinal. Then exactly one of the following holds:
\begin{enumerate}
\item $\HOD$ is a weak extender model for the supercompactness of $\kappa$.
\item Every regular cardinal $\geq \kappa$ is $\omega$-strongly measurable in $\HOD$.
\end{enumerate}
\end{theorem}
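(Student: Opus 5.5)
The plan is to prove the two directions separately: (1) rules out (2), and the failure of (2) yields (1). Nothing proved earlier in this paper is used; the argument is pure ZFC combinatorics about $\HOD$ together with the extendibility of $\kappa$. Recall that a regular $\delta$ is \emph{$\omega$-strongly measurable in $\HOD$} if there is $\gamma<\delta$ such that no partition of $S^\delta_\omega$ into $\gamma$ many stationary sets exists in $\HOD$.

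\emph{Step 1: (1) excludes (2).} Assume $\HOD$ is a weak extender model for the supercompactness of $\kappa$. Then standard arguments give that $\HOD$ has the $\kappa$-cover and $\kappa$-approximation properties. Consequently, for every singular strong limit $\gamma>\kappa$, $\HOD$ computes $\gamma^+$ correctly, and $\cf^{\HOD}$ agrees with $V$-cofinality above $\kappa$ on $\omega$-cofinal points up to a small error. Take $\delta=\gamma^+$ and $\beta<\delta$. Split $S^\delta_\omega$ inside $\HOD$ using a sequence in $\HOD$ of splittings of the $\HOD$-stationary set $(S^\delta_{\omega})^{\HOD}$, coming from Solovay's theorem. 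Cover arguments then show that each piece stays stationary in $V$. Hence $\delta$ is not $\omega$-strongly measurable in $\HOD$, so (2) fails.

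\emph{Step 2: the failure of (2) propagates.} Suppose some regular $\delta\ge\kappa$ is not $\omega$-strongly measurable in $\HOD$. Use extendibility: pick $j\colon V_\eta\to V_{j(\eta)}$ with critical point $\kappa$, $j(\kappa)>\delta$, and $\eta$ large. By elementarity (and since $\HOD$ is definable, $j(\HOD\cap V_\eta)=\HOD\cap V_{j(\eta)}$ for suitable $\eta$), $V_{j(\eta)}$ satisfies that there is a regular cardinal in $[j(\kappa), j(\eta))$ which is not $\omega$-strongly measurable in $\HOD$. Iterating this with arbitrarily large $\eta$ gives a proper class of such regular cardinals. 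The standard reflection then gives: for every regular $\lambda\ge\kappa$ there is a regular $\delta\ge\lambda$ that is not $\omega$-strongly measurable in $\HOD$, and also $\delta$ of this kind with $\delta$ a successor of a strong limit of uncountable cofinality.

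\emph{Step 3: the key step, which I expect to be the main obstacle.} Fix such a $\lambda$ and a normal fine $U$ on $\mathcal P_\kappa(\lambda)$. We must show that $U\cap\HOD\in\HOD$, after which $\HOD$ is a weak extender model. The method is Woodin's:
\begin{enumerate}
\item Choose in $\HOD$ a partition $\langle S_\xi\colon \xi<\kappa\rangle$ of $S^\delta_\omega$ into $V$-stationary sets, for a regular $\delta\ge\lambda$ that is not $\omega$-strongly measurable in $\HOD$.
\item Using Solovay's argument, $j_U[\delta]$ is recoverable in $M_U$ from $j_U(\vec S)$. Precisely, $\sup j_U[\delta]$ is the point at which $j_U(\vec S)$ reflects exactly on the coordinates $j_U[\kappa]$.
\item Hence $j_U[\lambda]$, and with it $U\cap\HOD$, is definable from objects in $\HOD$ together with the ordinal $\sup j_U[\delta]$.
\item Conclude that $U\cap \HOD$ is ordinal definable, so $U\cap\HOD\in\HOD$.
\end{enumerate}
The delicate point is item 2. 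One must show that the stationarity of each $S_\xi$ in $V$ transfers correctly through $j_U$, so that the reflection pattern pins down $j_U[\kappa]$ uniquely, and that the definition does not depend on the choice of $U$ beyond ordinal parameters. I would first try to carry this out using $\sup j_U[\delta]$ as the sole extra parameter. If the pattern does not pin down $j_U[\kappa]$ uniquely, I would fall back on Woodin's approach through the $\HOD$-cofinality of $\omega$-cofinal ordinals.

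Once Step 3 holds for a proper class of $\lambda$, $\HOD$ is a weak extender model for the supercompactness of $\kappa$, which is (1). Combined with Step 1, exactly one of (1) and (2) holds.
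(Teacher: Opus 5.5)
The paper quotes this theorem from Woodin without proof, so your plan has to stand on its own. Its key Step 3 does not work. Solovay's argument takes a partition $\vec S=\langle S_\xi\colon \xi<\mu\rangle\in\HOD$ of $S^\delta_\omega$ into stationary sets and an embedding $j$ with $\sigma=\sup j[\delta]<j(\delta)$ and $j[\delta]$ in the target. From these it recovers exactly $j[\mu]=\{\beta<j(\mu)\colon j(\vec S)_\beta\cap\sigma\text{ is stationary in }\sigma\}$. With $\mu=\kappa=\crit(j_U)$ this is just $\kappa$, so you learn nothing about $j_U\restriction\lambda$, and item 3 does not follow.

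To read off $U\cap\HOD=\{X\colon j[\lambda]\in j(X)\}$ you need $j$ on every $\HOD$-subset of $\mathcal{P}_\kappa(\lambda)$. That means taking $\mu=|\mathcal{P}(\mathcal{P}_\kappa(\lambda))\cap\HOD|^{\HOD}$ and an OD bijection $e\colon\mu\to\mathcal{P}(\mathcal{P}_\kappa(\lambda))\cap\HOD$, and using $j(e(\xi))=j(e)(j(\xi))$. Failure of $\omega$-strong measurability supplies a $\mu$-piece partition in $\HOD$ only when $(2^\mu)^{\HOD}<\delta$; this clause is missing from your definition. So $\delta>\lambda=\lambda^{<\kappa}$ (Solovay). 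But then $j_U$, for $U$ on $\mathcal{P}_\kappa(\lambda)$, is continuous at $\delta$, so $\sigma=j_U(\delta)$ and the formula tells you nothing.

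Moreover, a correct computation inside $M_U$ would only make $j_U[\lambda]$ OD in $M_U$, i.e.\ $j_U[\lambda]\in j_U(\HOD)$. That is the other clause of the definition, $\mathcal{P}_\kappa(\lambda)\cap\HOD\in U$, which your plan never mentions. It gives no reason for $U\cap\HOD$ to be OD in $V$: $\sigma$ and the map $X\mapsto j_U(X)$ depend on $U$, which need not be OD, and nothing places $\HOD^{M_U}$ inside $\HOD$.

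The standard repair uses extendibility in both steps. Your Step 2 is fine in outline. What you need there is $j(\HOD^{V_\eta})=\HOD^{V_{j(\eta)}}\subseteq\HOD$, not equality with $\HOD\cap V_{j(\eta)}$. Its real job is to provide a regular $\delta>(2^\mu)^{\HOD}$ that is not $\omega$-strongly measurable in $\HOD$.

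Then take $j\colon V_\eta\to V_{j(\eta)}$ with $\crit(j)=\kappa$, $\eta>\delta$ suitably large and $j(\kappa)>\eta$. Let $U$ be derived from $j$ via $j[\lambda]$. Now $j\restriction\mu$ is definable over $V_{j(\eta)}$ from $\sigma$ and ordinals, and hence so are $j\restriction(\mathcal{P}(\mathcal{P}_\kappa(\lambda))\cap\HOD)$ and $U\cap\HOD$. So $U\cap\HOD$ is OD in $V$ with elements in $\HOD$, hence in $\HOD$. Also $j[\lambda]\in\HOD^{V_{j(\eta)}}=j(\HOD^{V_\eta})$, which gives $\mathcal{P}_\kappa(\lambda)\cap\HOD\in U$.

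Step 1 is also not an argument as written. $\HOD$-stationary pieces of $(S^\delta_\omega)^{\HOD}$ need not be stationary in $V$, and you must partition $V$'s $S^\delta_\omega$. The usual route has three parts:
\begin{itemize}
\item A weak extender model for the supercompactness of $\kappa$ computes $\gamma^+$ correctly for singular $\gamma>\kappa$ (Woodin).
\item An $\omega$-strongly measurable $\delta$ is measurable in $\HOD$. Indeed, $\mathrm{NS}_\delta\cap\HOD$ is OD with elements in $\HOD$, so it lies in $\HOD$. There it is a $\delta$-complete ideal admitting no partition of $S^\delta_\omega$ into $\gamma$ positive sets, for some $\gamma$ with $(2^\gamma)^{\HOD}<\delta$, and therefore it has an atom.
\item $\gamma^+=(\gamma^+)^{\HOD}$ is a successor cardinal of $\HOD$, so it cannot be measurable there.
\end{itemize}
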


We refer to \cite{midrasha} for the relevant definitions. 

In \cite{GoldbergPoveda}, Goldberg-Poveda proved the consistency of Woodin's $\HOD$ hypothesis with the first supercompact cardinal $\kappa$ being strongly compact in $\HOD$ yet not $2^\kappa$-supercompact in $\HOD$. Therefore, if $\kappa$ is merely supercompact (and not $\HOD$-supercompact, as required by Woodin),  $\HOD$ may fail to be a weak extender model for the supercompactness of the first supercompact cardinal $\kappa$ -- even under the HOD hypothesis.

In this brief section we further strengthen this disagreement between $V$ and $\HOD$ by producing the consistency of the $\HOD$ hypothesis with the first supercompact cardinal  carrying a unique normal measure in $\HOD$.

\smallskip

We will need the natural  nonstationary support variation of the family of self-coding iterations considered in \cite{GoldbergPoveda}:

\begin{definition}\label{def: selfcoding}    Let $\theta$ be an ordinal. A nonstationary support \emph{self-coding iteration} is a sequence $$\mathbb{P} = \langle \mathbb{P}_\alpha, \dot{\mathbb{Q}}_\alpha,
    \dot{A}_\alpha\colon \alpha<\theta\rangle$$ such that the following hold {for every Mahlo cardinal $\alpha$}, where $t_\alpha$ denotes the transitive closure of $\mathbb{P}_\alpha$ and $G_\alpha$ denotes the canonical $\mathbb P_\alpha$-name for a generic set:
    \begin{enumerate}
        \item $\dot{\mathbb{Q}}_\alpha$ is a $\mathbb{P}_\alpha$-name for a poset that is forced by the trivial condition  to be $|\mathbb{P}_\beta|$-closed, for all $\beta<\alpha.$
        \item $\dot{A}_\alpha$ is a $\mathbb{P}_\alpha$-name forced by
        the trivial condition to be a binary relation on $|t_\alpha|^V$ whose transitive collapse is $t_\alpha\cup\{\dot{G}_\alpha\}.$
        \item $\mathbb{P}_{\alpha+1}=\mathbb{P}_\alpha\ast \dot{\mathbb{Q}}_\alpha\ast {\mathrm{Code}}(\dot{A}_\alpha,\gamma)$ where $\gamma$ is the first inaccessible above $|\mathbb{P}_\alpha\ast \dot{\mathbb{Q}}_\alpha|$ and $\mathrm{Code}(\dot{A}_\alpha,\gamma)$ is the coding poset from \cite[\S2]{GoldbergPoveda}.
        
    \end{enumerate}
    As usual, when defining $\po$ at limit stages $\alpha$, the iteration $\mathbb{P}_\alpha$ is constructed as the nonstationary support limit if $\alpha$ is inaccessible 
        or as the inverse limit otherwise.
\end{definition}
The following fact was proved in {\cite[Lemma 2.21]{GoldbergPoveda}} for self-coding iterations with Easton support. The proof for self-coding iterations with nonstationary support is similar: 
\begin{fact}\label{lemma: self-coding}\label{fact: V=HOD}
    Suppose that $\theta$ is a Mahlo cardinal and let $\vec{\mathbb{P}}=\langle \mathbb{P}_\alpha, \dot{A}_\alpha, \dot{\mathbb{Q}}_\alpha\colon \alpha<\theta\rangle$ be a a self-coding iteration. If $G\s \mathbb{P}_\theta$ is generic over $V$ then $G$ is definable over the structure $(V[G]_\theta, V_\theta, \vec{\mathbb{P}})$ without parameters.  In particular, if $V\s \HOD^{V[G]}$ and $\vec{\mathbb{P}}$ is definable over $V_\theta$ then $$\HOD^{V[G]}=V[G].$$
\end{fact}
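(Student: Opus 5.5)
The plan is to transfer the proof of \cite[Lemma 2.21]{GoldbergPoveda} to the nonstationary support setting. I will recover $G$ from $V[G]_\theta$ by a recursion on $\alpha<\theta$, using $V_\theta$ and $\vec{\mathbb{P}}$ as the only parameters. The recursion computes, for each $\alpha<\theta$, the filter $G_\alpha=G\cap \mathbb{P}_\alpha$. Since $G$ is determined by $\langle G_\alpha\colon\alpha<\theta\rangle$ (as $\theta$ is Mahlo and conditions in the nonstationary support limit are coherent unions of their restrictions), this is enough.

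\textbf{Successor and non-Mahlo steps.} Suppose $G_\alpha$ is known and $\alpha$ is Mahlo. The coding poset $\mathrm{Code}(\dot{A}_\alpha,\gamma)$ from \cite[\S2]{GoldbergPoveda} codes the relation $A_\alpha=(\dot{A}_\alpha)_{G_\alpha}$ into the stationarity pattern of some definable sequence of sets at or just above $\gamma$. Here $\gamma$ is the first inaccessible above $|\mathbb{P}_\alpha\ast\dot{\mathbb{Q}}_\alpha|$, and in particular it is definable from $\vec{\mathbb{P}}$.
\begin{enumerate}
\item I will verify that the later stages $\mathbb{P}\setminus(\alpha+1)$ are sufficiently closed, by clause (1) of Definition \ref{def: selfcoding} together with the closure of the coding posets. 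Granting this, the pattern is not disturbed, so $A_\alpha$ is definable in $V[G]_\theta$ from $\alpha$ and $\vec{\mathbb{P}}$.
\item Collapsing $A_\alpha$ transitively yields $t_\alpha\cup\{G_\alpha\}$, and therefore $G_\alpha$ itself.
\item To get the full $G_{\alpha+1}$, I will use that the next coding stage codes $t_{\alpha'}\cup\{G_{\alpha'}\}$ for the next Mahlo $\alpha'$. Every $G_\beta$ is then an initial segment of some coded $G_{\alpha'}$, and since the Mahlo cardinals are unbounded in $\theta$, this handles non-Mahlo $\beta$ as well.
\end{enumerate}
The whole procedure is uniform in $\alpha$, so it gives a parameter-free definition over $(V[G]_\theta,V_\theta,\vec{\mathbb{P}})$ of the map $\alpha\mapsto G_\alpha$ on the Mahlo cardinals. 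Taking the union gives $G$.

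\textbf{Main obstacle.} I expect the hardest point to be preserving the decoding, namely checking that the stationarity (or nonstationarity) pattern witnessed by $\mathrm{Code}(\dot{A}_\alpha,\gamma)$ survives the tail forcing. With Easton support the tail is simply highly closed. With nonstationary support, the tail $\mathbb{P}\setminus(\alpha+1)$ is only strategically closed, as in the fusion-lemma arguments. My first move is to show, as in the footnote to Claim \ref{Claim: fusion lemma, arranging clause 3}, that this tail is $\min(\text{Mahlo}\setminus(\alpha+1))$-strategically closed. That suffices: strategic closure adds no new short sequences, and it preserves stationarity of subsets of the relevant small regular cardinals, which is what the coding uses.

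\textbf{The ``in particular'' clause.} Now assume $V\subseteq \HOD^{V[G]}$ and that $\vec{\mathbb{P}}$ is definable over $V_\theta$.
\begin{enumerate}
\item $V_\theta$ is ordinal definable in $V[G]$, by Laver's Ground Model Definability Theorem \ref{thm: Laver ground model definability thm} combined with the fact that $V\subseteq\HOD^{V[G]}$.
\item Hence $\vec{\mathbb{P}}$ is ordinal definable as well.
\item $V[G]_\theta$ is ordinal definable, being the $\theta$-th rank initial segment.
\item By the first part, $G$ is then ordinal definable.
\item Every element of $V[G]$ is $\dot{x}_G$ for some $\dot{x}\in V\subseteq\HOD$, so $V[G]\subseteq\HOD^{V[G]}$, which gives equality.
\end{enumerate}
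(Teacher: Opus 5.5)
Your proposal takes the paper's approach: the paper gives no separate proof and only asserts that the Easton-support proof of \cite[Lemma 2.21]{GoldbergPoveda} carries over, and your plan is that adaptation (decode each $G_\alpha$ at the coding stages, check that the nonstationary-support tail is closed enough not to disturb the coding, recover $G$ from $\langle G_\alpha\colon\alpha<\theta\rangle$, then use $V\subseteq\HOD^{V[G]}$). Two small corrections: $\mathrm{Code}(\dot{A}_\alpha,\gamma)$ codes into the power-set function pattern (as the paper notes in the proof of Theorem~\ref{thm: Hod hypothesis}), not a stationarity pattern, though your closure argument preserves that pattern equally well; and you tacitly need the Mahlo coding stages to be unbounded in $\theta$, which does not follow from $\theta$ merely being Mahlo but does hold in the paper's application, where $\theta=\kappa$ is supercompact.
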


\begin{theorem}\label{thm: Hod hypothesis}
Assume $\mathrm{GCH}$ and $V=\gHOD$ both hold\footnote{Recall that ${\rm{gHOD}} := \{ x\in V \colon \text{ for every poset }\po\in V, \ (\Vdash_{\po}  \check{x}\in {\rm{HOD}}) \}$. See \cite{Geology}.}, $\kappa$ is a supercompact cardinal, {there are no inaccessible cardinals above $\kappa$} and there exists a unique normal measure on $\kappa$ with Mitchell order $0$. Then, the following configuration is consistent:
    \begin{enumerate}
        \item The $\mathrm{HOD}$ hypothesis holds.
        \item $\kappa$ is supercompact.
        \item In $\HOD$, $\kappa$ is strongly compact  and it carries exactly one normal measure.
    \end{enumerate}    
\end{theorem}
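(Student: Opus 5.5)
The plan is to build a single nonstationary support self-coding iteration $\mathbb{P}=\langle \mathbb{P}_\alpha,\dot{\mathbb{Q}}_\alpha,\dot{A}_\alpha\colon \alpha<\kappa\rangle$ in the sense of Definition \ref{def: selfcoding}, whose nontrivial stages $\dot{\mathbb{Q}}_\alpha$ at measurable $\alpha<\kappa$ add nonreflecting stationary sets, exactly as in $\mathbb{NR}(0,\kappa)$. I would then pass to a further extension in which $\HOD$ is computed by Fact \ref{fact: V=HOD}. Concretely, let $G\subseteq\mathbb{P}$ be generic. Since $V=\gHOD$ and $\mathbb{P}$ is definable over $V_\kappa$, Fact \ref{fact: V=HOD} gives $\HOD^{V[G]}=V[G]$.

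The $\HOD$-side properties are then read off in $V[G]$.
\begin{enumerate}
\item Since there are no measurable cardinals above $\kappa$ (indeed no inaccessibles), the argument of Theorem \ref{thm: Strong compactness after iterating nonreflecting stat sets when there are no measurable cardinals above} should go through verbatim. That argument lifts $j=j_{j_U(W)}\circ j_U$ using the order-$0$ measure $U$, and the coding stages are small and sufficiently closed, so they do not interfere with the master-condition constructions. This gives that $\kappa$ is strongly compact in $V[G]$.
\item For the normal measures I would follow Lemma \ref{lemma: normal measures after iterating NRs}. Every normal measure in $V[G]$ extends, by gap forcing, one of order $0$ in $V$. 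By hypothesis there is a unique such measure, so $\kappa$ carries exactly one normal measure in $V[G]$.
\end{enumerate}

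Next I must recover supercompactness of $\kappa$ in a larger model $V[G][H]$, while keeping $\HOD^{V[G][H]}=V[G]$ and making the HOD hypothesis hold. I would take a Laver-style preparation folded into the same iteration. Alternatively, following \cite{GoldbergPoveda}, I would force over $V[G]$ with a homogeneous, sufficiently closed poset $\mathbb{S}$ that resurrects supercompactness, for instance by adding generics to the $\mathbb{NR}(\alpha)$-quotients that kill the nonreflecting sets (club shooting through their complements). The key points are:
\begin{enumerate}
\item $\mathbb{S}$ should be weakly homogeneous and ordinal definable in $V[G]$, so that $\HOD^{V[G][H]}\subseteq V[G]$.
\item Self-coding gives $V[G]\subseteq\HOD^{V[G][H]}$.
\item The composite $\mathbb{P}*\dot{\mathbb{S}}$ should be equivalent to an iteration that lifts a supercompactness embedding $j\colon V\to M$, with master conditions as in Theorem \ref{theorem: Laver indestructibility}. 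In $M$, stage $\kappa$ receives the nonreflecting-set forcing followed by its resurrecting club shooting, which together form a forcing with a dense $\kappa$-closed subset.
\end{enumerate}

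The HOD hypothesis then follows because $\HOD=V[G]$ is a set-forcing extension of $V=\gHOD$. Above $\kappa$ there are no inaccessibles, so $V[G]$ computes covering and successors correctly. I would argue that the class of regular cardinals of cofinality $\omega$ in $V$ that are not $\omega$-strongly measurable in $V[G]$ is proper, using that GCH holds and nothing large exists above $\kappa$. This forces the failure of $\omega$-strong measurability at all sufficiently large regular cardinals, as in \cite{GoldbergPoveda}.

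The main obstacle I expect is the resurrection step: showing that $V[G][H]$ sees $\kappa$ as supercompact while $\HOD$ remains exactly $V[G]$. This requires the quotient forcing $\mathbb{S}$ to be both homogeneous and to combine with $\mathbb{NR}$ into a forcing with a nice dense closed subset, so that the master-condition arguments from Section \ref{Section: Forcing NR} apply. I would first try the standard fact that $\mathbb{NR}(\alpha)*\mathrm{Club}(\alpha\setminus\dot S)$ has a dense $\alpha$-directed closed subset, and verify that the nonstationary support still permits the fusion-based lifting of Lemma \ref{Lemma: lifting elementary embeddings with fusion}.
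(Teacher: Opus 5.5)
\textbf{There is a genuine gap, and it is in the resurrection step.} Your handling of the $\HOD$ side is a workable variant of the paper's. With $\mathbb{NR}(\alpha)$ at the measurable stages, Lemma~\ref{lemma: normal measures after iterating NRs} and the argument of Theorem~\ref{thm: Strong compactness after iterating nonreflecting stat sets when there are no measurable cardinals above} should give that in $V[G]$ the cardinal $\kappa$ is strongly compact with a unique normal measure. One caveat: you still have to re-check Fact~\ref{fact: V=HOD}, since Definition~\ref{def: selfcoding} asks the stage posets to be $|\mathbb{P}_\beta|$-closed, and $\mathbb{NR}(\alpha)$ is only strategically closed.

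The resurrection step carries the whole difficulty of the theorem, and you leave it as something you ``would first try''. Two requirements must hold at once, and your $\mathbb{S}$ secures neither.
\begin{enumerate}
\item To lift a $\lambda$-supercompactness embedding $j\colon V\to M$, you need an $M[G]$-generic for stage $\kappa$ of $j(\mathbb{P})$. Since $M[G]$ and $V[G]$ have the same subsets of $\kappa$, no such generic exists in $V[G]$; it must come from $H$. So $\mathbb{S}$ must act at $\kappa$ itself, which your third item gestures at but never defines. You also need a generic for the stage-$\kappa$ coding poset $\mathrm{Code}(A_\kappa,\gamma)^{M[G]}$. This is where the absence of inaccessibles above $\kappa$ is really used: the first $M$-inaccessible above $\kappa$ then lies above $\lambda$.
\item $\HOD^{V[G][H]}\subseteq V[G]$ requires $\mathbb{S}$ to be weakly homogeneous, and here there is no evident reason for it. Elementarity forces on you the club-shooting components $\mathrm{Club}(\alpha\setminus S_\alpha)$ for $\alpha<\kappa$, so that $j(\mathbb{S})$ kills the nonreflecting set added at stage $\kappa$. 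Over $V[G]$ each such poset destroys a stationary subset of $\alpha$, so it is not equivalent to any ${<}\alpha$-closed forcing such as $\Add(\alpha,1)$. Moreover, cones in the tree of closed bounded subsets of $\alpha\setminus S_\alpha$ need not be isomorphic. The two-step $\mathbb{NR}(\alpha)*\mathrm{Club}$ is well behaved, but the quotient you actually force with over $\HOD$ is not known to be.
\end{enumerate}

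\textbf{The missing idea is to choose the stage posets so that nothing has to be undone.} The paper forces with $\Add(\alpha,1)$ at measurable closure points $\alpha$ of a Laver function. Measurability below $\kappa$ then survives, but uniqueness in $V[G]$ still holds:
\begin{enumerate}
\item Suppose $W\neq U^*$. Then $\bar W=W\cap V$ has Mitchell order at least $1$.
\item Lemma~\ref{lemma: capturing functions via fusion} shows that the target of $j_W$ is $M_{\bar W}[j_W(G)]$. Hence $\kappa\in j_{\bar W}(I)$.
\item So $j_W(G)$ contains a Cohen subset of $\kappa$ generic over $M_{\bar W}[G]$. By Claim~\ref{Claim: closure under kappa sequneces and fusion} it is generic over $V[G]$, yet it lies in $V[G]$, a contradiction.
\end{enumerate}
Resurrection is then just forcing with $\Add(\kappa,1)^{V[G]}$. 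This poset is weakly homogeneous and leaves the coding intact, so $\HOD^{V[G][H]}=V[G]$. Its generic is exactly the stage-$\kappa$ generic needed to lift through $j(\mathbb{P})$.

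\textbf{A smaller point: your justification of the $\HOD$ hypothesis is not the relevant one.} What matters is that the final model is a set-generic extension of its own $\HOD$. Then, for regular $\lambda$ above the size of that forcing, $\HOD$-partitions of $S^\lambda_\omega$ into stationary pieces stay stationary. The fact that $V[G]$ is a set-forcing extension of $V$ plays no role.
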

\begin{proof}
Fix $\ell\colon \kappa\rightarrow V_\kappa$ a Laver function. We shall denote by $I$ the set of measurable cardinals $\alpha<\kappa$ that are closure points of $\ell$, namely
$$I = \{ \alpha<\kappa \colon \alpha \text{ is measurable }\land  \forall \beta<\alpha \left( \ell(\beta)\in {\rm{ORD}}\to \ell(\beta)<\alpha \right) \}.$$
Let $\mathbb{P}_\kappa$ the  $I$-spaced, nonstationary support self-coding iteration of length $\kappa$ such that $\dot{\mathbb{Q}}_\alpha$ is forced to be $\Add(\alpha,1)$ at every $\alpha\in I$.

\smallskip

By Fact~\ref{fact: V=HOD},  for every $G\s \mathbb{P}_\kappa$ generic over $V$ we have that $$V[G]=\HOD^{V[G]}.$$

 Our intended generic extension is obtained by forcing with $\mathbb{P}_\kappa\ast \dot{\Add}(\kappa,1)$. Let $G\ast H\s \mathbb{P}_\kappa\ast \Add(\kappa,1)$ a generic over $V$.
  $$\HOD^{V[G][H]}=V[G]$$ by homogeneity of $\Add(\kappa,1)^{V[G]}$ and because this forcing does not disturb the coding into the power-set function pattern (see  \cite{GoldbergPoveda} for details)

\begin{claim}\label{claim: a unique normal measure}
    In  $\HOD^{V[G][H]}$, $\kappa$ carries exactly one normal measure.
\end{claim}
\begin{proof}
By our previous observations, it suffices to argue the claim in  $V[G]$. Let $U$ be the unique normal measure of trivial Mitchell rank in $V$. By the argument of Claim~\ref{Claim: indestructibility proof, claim 1}, $U$ generates in $V[G]$ a normal measure $U^*$.   

Suppose for the sake of a contradiction $W$ is a normal measure on $\kappa$ in $V[G]$ different from $U^*$. Then, by the Gap Forcing Theorem,  $\bar{W}=W\cap V$ belongs to $V$ and the ultrapower of $V[G]$ under $W$, $M_W$, is of the form $M_W=M[j_W(G)]$ where  $M=M_W\cap V$. 

{Crucially, $M=M_{\bar W}$.} To show this, let $k\colon M_{\bar{W}}\to M$ be the map defined by $$k([f]_{\bar{W}}) := [f]_{W}.$$ It is not hard to verify that $k$ is elementary and that $j_W\restriction V = k\circ j_{\bar{W}}$. To complete the proof of the equality it is  enough to argue that $k$ is the identity function. Assume otherwise, and denote $\mu = \crit(k)$. Since $\bar{W}, W$ are normal measures, $\mu\geq (\kappa^{++})^{M_{\bar{W}}}$. Write $\mu = [f]_W$ for some $f\colon \kappa\to \ord$ in $V[G]$. By Lemma \ref{lemma: capturing functions via fusion}, there exists $F\in V$ and a club $C\subseteq \kappa$ in $V$, such that, for every $\alpha\in C$, $f(\alpha)\in F(\alpha)$ and $|F(\alpha)|\leq|\mathbb{P}_{\alpha+1}|^V= \alpha^+$. In particular, by normality of $W$, $\kappa\in j_W(C)$, so
    $$\mu = j_W(f)(\kappa)  \in j_{W}(F)(\kappa) = k(j_{\bar{W}}(F))(\kappa) = k(j_{\bar{W}}(F)(\kappa)) = k[ j_{\bar{W}}(F)(\kappa)]$$
    where in the second-to-last equality we used the fact that  $\crit(k)>\kappa$, and in the last equality we used the fact that $|j_{\bar{W}}(F)(\kappa)|\leq \kappa^{+}< \crit(k)$. Overall, we proved that $\mu\in \text{Im}(k)$, contradicting the fact that $\mu = \crit(k)$.   

    \smallskip

Note that since we are assuming that $W\neq U^*$ it cannot be the case that $\bar{W}=U$. Therefore,  by our assumption in $V$, the measure $\bar{W}$ must have Mitchell-order ${\geq 1}$ in $V$. Thus we have an elementary embedding $j_W\colon V[G]\rightarrow M_{\bar W}[j_{{W}}(G)]$ 
where $M_{\bar W}\models ``\kappa$ is measurable". In particular, this means that $j_W(\mathbb{P}_\kappa)$ has added a Cohen subset to $M_{\bar W}[j_W(G)\cap j_W(\mathbb{P}_\kappa)_\kappa]=M_{\bar{W}}[G]$ at stage  $\kappa$. Next, we argue that this Cohen subset of $\kappa$ must be $V[G]$-generic which yields the sought contradiction with $W\neq U^*$.

First, $\Add(\kappa,1)^{V[G]}=\Add(\kappa,1)^{M_{\bar W}[G]}$ because $M_{\bar W}[G]$ is an inner model of $V[G]$ closed under $\kappa$-sequences of its elements by Claim \ref{Claim: closure under kappa sequneces and fusion}. Second, any maximal antichain $A\in V[G]$ in $({\rm{Add}}(\kappa, 1)^{V[G]}$ is a subset of $V[G]_\kappa$, so again by Claim \ref{Claim: closure under kappa sequneces and fusion}, $A\in M_{\bar{W}}[G]$.\footnote{Indeed, $A$ can be coded as a $\kappa$-sequence $\vec{X} = \la A\cap (V_\beta)^{V[G]} \colon \beta<\kappa \ra$, and, since $A\cap (V_\beta)^{V[G]}\in M_{\bar{W}}[G]$ for every $\beta<\kappa$, $\vec{X}\in M_{\bar{W}}[G]$.}
Thus, any $M_{\bar{W}}[G]$-generic for $\Add(\kappa,1)^{V[G]}$ is generic over $V[G]$.
\end{proof}

An argument simpler than the one given in the proof of Theorem \ref{thm: Strong compactness after iterating nonreflecting stat sets} shows that $\kappa$ remains strongly compact in $V[G]$.\footnote{The simplicity comes from the fact that this time our iteration uses Cohen forcing in place of $\mathbb{NR}(\alpha)$; that is, now the forcing at each stage $\alpha$ is $\alpha$-directed closed, while before, the forcings $\mathbb{NR}(\alpha)$ only had arbitrarily high dense directed closed subsets.}

\begin{claim}
In $V[G]$, $\kappa$ is strongly compact.    
\end{claim}
\begin{proof}
   Let $\lambda>\kappa$ be regular,  $U_0$  the unique normal measure on $\kappa$ with trivial Mitchell rank and $F\in M_{U_0}$ a normal, fine ultrafilter on $\mathcal{P}_{j_{U_0}(\kappa)}(j_{U_0}(\lambda))^{M_U}$ such that $j_{F}(j_{U_0}(\ell))(j_{U_0}(\kappa))=j_{U_0}(\lambda).$
   We will lift the  embedding $$j=j_{F}\circ j_{U_0}\colon V\to M\simeq \Ult(M_{U_0}, F).$$
   
   First, since $\mathbb{P}_\kappa$ has nonstationary support and $U_0$ has a trivial Mitchell order, $j_{U_0}$ lifts to 
   $$j_{U_0}\colon V[G]\rightarrow M_{U_0}[G\ast j_{U_0}[G]\setminus \kappa+1]=M_{U_0}[j_{U_0}[G]].$$
   Second, by elementarity and the definition of $\mathbb{P}_\kappa$, $j(\mathbb{P}_\kappa)$ factorises as $$j_{U_0}(\mathbb{P}_\kappa)\ast \dot{\Add}(j_{U_0}(\kappa), 1)\ast \dot{\mathbb{Q}},$$
   where the latter poset is forced to be $j_{U_0}(\lambda)^{+M_U[j_{U_0}(G)]}$-directed closed.

   Let $g\in V[G]$ be an $M[j_{U_0}(G)]$-generic filter for $\Add(j_{U_0}(\kappa), 1)^{M[j_{U_0}[G]]}$. Constructing such $g$ in $V[G]$ is possible, since $({\rm{Add}}(j_{U_0}(\kappa),1))^{M[ j_{U_0}[G] ]}$ is a $\kappa^+$-closed forcing whose dense open sets in $M[j_{U_0}[G]]$ can be enumerated in $V[G]$ in order-type $\kappa^+$.

  Since $\dot{\qo}$ is sufficiently directed closed, the set $$\{j(p)\setminus j_{U_0}(\kappa)\colon p\in G\}=\{\{\one\}^\smallfrown j(p)\setminus j_{U_0}(\kappa)+1\colon p\in G\}$$
  admits a lower bound $q\in j(\po)\setminus j_{U_0}(\kappa)$ with $q(j_{U_0}(\kappa)) = \one$. Using $q$ as a master condition, we can build $L\in M_U[j_{U_0}(G)\ast g]$ a $j(\po)\setminus (j_{U_0}(\kappa)+1)$-generic over $M[j_{U_0}[G]\ast g]$ such that $q\in g*L$. Therefore, $j\colon V\to M$ lifts to
  $$j\colon V[G]\rightarrow M[j_{U_0}(G)\ast g\ast L]$$
  that is a $\lambda$-strongly compact embedding.
\end{proof}

\begin{claim}
    In $V[G]$, $\kappa$ is not $2^\kappa$-supercompact.
\end{claim}
\begin{proof}
    Suppose otherwise and let $j\colon V[G]\rightarrow M$ be a witnessing embedding in $V[G]$. Then $\kappa$ is measurable in $M$. By the Gap Forcing Theorem \ref{thm: Gap forcing theorem}, $M=\bar{M}[H]$ for some subclass $\bar{M}\s V$ and a $j(\mathbb{P}_\kappa)$-generic $H$ over $\bar{M}$. 

  By the Gap Forcing Theorem~\ref{thm: Gap forcing theorem} forcing with $j(\mathbb{P}_\kappa)$ does not create new measurable cardinals. As a result, $\kappa$ is measurable in $\bar{M}$ as well. In addition, $\kappa$ is a closure point of $j(\ell)$  so, all in all, $\kappa\in j(I)$. This means that at stage $\kappa$ the iteration $j(\mathbb{P}_\kappa)$ has opted to force with $\Add(\kappa,1)$. Using exactly the same argument given in Claim \ref{claim: a unique normal measure} one demonstrates that this Cohen is in fact generic over $V[G]$, which produces the sought contradiction.
   
 \end{proof}
After forcing with  $H\s \mathrm{Add}(\kappa,1)_{V[G]}$ the supercompactness of $\kappa$ is restored: Indeed,  in $M[G]$ the forcing used at stage $\kappa$ is of the form $\Add(\kappa,1)_{V[G]}\ast \mathrm{Code}(A_\kappa,\gamma)_{M[G]}$ where $\gamma$ is the first $M[G]$-inaccessible above $\kappa$. Since we were assuming that no inaccessible cardinal greater than $\kappa$ exists it follows that the first $M[G]$-inaccessible greater than $\kappa$ is above $\lambda$. As a result, standard arguments allow us to define a $M[G]$-generic filter for $\mathrm{Code}(A_\kappa,\gamma)_{M[G]}$ in $V[G\ast H]$. These considerations together with standard arguments of Silver (see \cite[Theorem~12.6]{CummingsHandbook}) finally show that $\kappa$ is supercompact in $V[G\ast H]$.
\end{proof}

\bibliographystyle{plain} 
\bibliography{citations}
\end{document}